\newcommand{\bb}{\mathbb}
\newcommand{\frk}{\mathfrak}
\newcommand{\cl}{\mathcal}
\newcommand{\kap}{{\kappa_{\psi,g}}}
\newcommand{\kapone}{{\kappa_{\psi,g,1}}}
\newcommand{\kapinftys}{{\kappa_{\psi,g,\infty}}}
\newcommand{\kapinftyones}{{\kappa_{\psi,g,1,\infty}}}
\newcommand{\kapinftyns}{{\kappa_{\psi,g,n,\infty}}}
\newcommand{\kapinftynqs}{{\kappa_{\psi,g,nq,\infty}}}
\theoremstyle{plain}
\newtheorem{thmintro}{Theorem}
\newtheorem{theorem}{Theorem}[section]
\newtheorem{lemma}[theorem]{Lemma}
\newtheorem{prop}[theorem]{Proposition}
\theoremstyle{definition}
\newtheorem{defn}[theorem]{Definition}
\theoremstyle{remark}
\newtheorem{rk}{Remark}
\newcommand{\mbf}{\mathbf}
\newcommand{\mrm}{\mathrm}
\newcommand{\scr}{\mathscr}
\newcommand{\hf}{\mathscr{F}}
\newcommand{\hg}{\mathscr{G}}
\newcommand{\Gal}{\mathrm{Gal}}
\newcommand{\ch}{\mathrm{CH}}
\newcommand{\longmono}{\mbox{$\;\lhook\joinrel\longrightarrow\;$}}
\DeclareMathOperator{\ab}{ab}
\DeclareMathOperator{\ac}{ac}
\DeclareMathOperator{\As}{As}
\DeclareMathOperator{\bal}{bal}
\DeclareMathOperator{\CH}{CH}
\DeclareMathOperator{\Char}{Char}
\DeclareMathOperator{\cores}{cor}
\DeclareMathOperator{\cyc}{cyc}
\DeclareMathOperator{\et}{\acute{e}t}
\DeclareMathOperator{\Fil}{Fil}
\DeclareMathOperator{\Fr}{Fr}
\DeclareMathOperator{\GL}{GL}
\DeclareMathOperator{\Gr}{Gr}
\DeclareMathOperator{\Hom}{Hom}
\DeclareMathOperator{\id}{id}
\DeclareMathOperator{\Ind}{Ind}
\DeclareMathOperator{\Iw}{Iw}
\DeclareMathOperator{\nr}{nr}
\DeclareMathOperator{\ord}{ord}
\DeclareMathOperator{\Sel}{Sel}
\DeclareMathOperator{\SL}{SL}
\DeclareMathOperator{\std}{std}
\DeclareMathOperator{\tors}{tors}
\begin{document}

\title[An anticyclotomic Euler system of Hirzebruch--Zagier cycles I]{An anticyclotomic Euler system of Hirzebruch--Zagier cycles I: Norm relations and $p$-adic interpolation}

\author{Ra\'ul Alonso}
\author{Francesc Castella} 
\author{\'Oscar Rivero}

\date{\today}

\address{R. A.: Department of Mathematics, University of California, Santa Barbara, CA 93106, USA}
\email{raular@ucsb.edu}

\address{F. C.: Department of Mathematics, University of California, Santa Barbara, CA 93106, USA}
\email{castella@ucsb.edu}


\address{O. R.: Departamento de Matem\'aticas, Universidade de Santiago de Compostela, Spain}
\email{oscar.rivero@usc.es}

\maketitle

\begin{abstract}
We construct an anticyclotomic Euler system for the Asai Galois representation associated to $p$-ordinary Hilbert modular forms over real quadratic fields. We also show that our Euler system classes vary in $p$-adic Hida
families. The construction is based on the study of certain Hirzebruch--Zagier cycles obtained from modular curves of varying level diagonally emdedded into the product with a Hilbert modular surface. By Kolyvagin's methods, in the form developed by Jetchev--Nekov\'{a}\v{r}--Skinner in the anticyclotomic setting, the construction yields new applications to the Bloch--Kato conjecture and the Iwasawa Main Conjecture.
%
\end{abstract}

\setcounter{tocdepth}{1}
\tableofcontents

\section{Introduction}

In this paper, we construct an anticyclotomic Euler system for the Asai Galois representation associated to $p$-ordinary Hilbert modular forms over real quadratic fields. We also show that our Euler system classes vary in $p$-adic families. 
Using Kolyvagin's methods, in the form developed by Jetchev--Nekov\'{a}\v{r}--Skinner \cite{JNS} in the anticyclotomic setting, our construction yields new  applications towards the Bloch--Kato conjecture in rank one and towards the Iwasawa Main Conjecture.




\subsection{Setting}

Let $F$ be a real quadratic field with ring of integers $\cal{O}_F$. With conventions as in Section~\ref{sec:review}, we let $g\in S_l(\mathfrak{N}_g,\chi_g;\bb{C})$ be a Hilbert eigenform over $F$ of level $\frk{N}_g\subset\cal{O}_F$, parallel weight $l\geq 2$, and nebentype $\chi_g$.  Let $\cal{K}$ be an imaginary quadratic field of discriminant $-D<0$, and let $\psi$ be a Hecke character of $\cal{K}$ of conductor $\frk{c}\subset\cal{O}_{\cal{K}}$ and infinity type $(1-k,0)$ for some even integer $k\geq 2$. We assume the self-duality condition
\begin{equation}\label{eq:sd}
\chi_\psi\varepsilon_{\cal{K}}\chi_g\vert_{\bb{A}_{\bb{Q},f}^\times}=1,\nonumber
\end{equation}
where $\chi_\psi$ denotes the central character of $\psi$, and $\varepsilon_{\cal{K}}$ is the quadratic character corresponding to $\cal{K}/\bb{Q}$. Put $N=N_{\cl{K}/\bb{Q}}(\frk{c})DN_{F/\bb{Q}}(\frk{N}_g)$. Fix a prime $p\nmid 2N$ and an embedding $\iota_p:\overline{\mathbb Q}\hookrightarrow\overline{\mathbb Q}_p$, where $\overline{\bb{Q}}$ denotes the algebraic closure of $\bb{Q}$ in $\bb{C}$, and let $E=L_{\frk{P}}$ be a finite extension of $\bb{Q}_p$ with ring of integers $\cal{O}$ containing the image under $\iota_p$ of
the Fourier coefficients of $g$ and the values of $\psi$. Let $\rho_g:G_F\rightarrow\mathrm{GL}(V_g)\simeq\mathrm{GL}_2(E)$ be the $p$-adic Galois representation attached to $g$, and let
\[
\mathrm{As}(V_g):=\otimes\mbox{-}\mathrm{Ind}_F^\bb{Q}(V_g)
\]
be the associated four-dimensional \emph{Asai representation}. 

\subsection{The Euler system} 

A main result of this paper is the construction of an anticyclotomic Euler system 
for the conjugate self-dual  $G_{\cal{K}}$-representation
\[
V:=\mathrm{As}(V_g)\vert_{G_{\cal{K}}}(\psi_{\frk{P}}^{-1})(2-l-k/2),
\]
where $\psi_{\frk{P}}:G_{\cal{K}}\rightarrow L^\times$ is the $p$-adic avatar of $\psi$. 

For each positive integer $n$, we let $\cl{K}[n]$ denote the maximal $p$-extension inside the ring class field of $\cl{K}$ of conductor $n$, and let $\mathcal{S}$ be the set of all squarefree products of primes $q$ split in both $F$ and $\cal{K}$ and coprime to $pN$. For any prime $\mathfrak {q}$ of $\cl{K}$ where $V$ is unramified, put
\[
P_{\mathfrak{q}}(V;X)=\det(1-\Fr_{\frk{q}}^{-1}X\vert V),
\]
where $\Fr_\frk{q}$ denotes an arithmetic Frobenius element at $\frk{q}$. The representation $\mathrm{As}(V_g)$ appears in the middle-degree cohomology of Hilbert modular surfaces, and so taking integral coefficients we obtain a $G_{\cal{K}}$-stable $\cal{O}$-lattice $T\subset V$. 
Let $h_{\cal{K}}$ denote the class number of $\cal{K}$, and let  
\[
\mathrm{Sel}_{\bal}(\cal{K}[np^\infty],T)\subset H^1_{\Iw}(\cal{K}[np^\infty],T):=\varprojlim_s H^1(\cal{K}[np^s],T)
\]
be the \emph{balanced Selmer group} introduced in Section~\ref{subsec:Selmer}; in the range $k<2l$, this interpolates the Bloch--Kato Selmer groups for $T$ over the tower $\cal{K}[np^\infty]/\cal{K}[n]$. 

\begin{thmintro}[Theorem~\ref{thm:ES}]\label{thmA}
Suppose $p$ splits in $\cal{K}$ and $p\nmid h_{\cal{K}}$, and that $g$ is ordinary at $p$. 
There exists a collection of classes
\[
\left\lbrace\kapinftyns\in \mathrm{Sel}_{\bal}(\cal{K}[np^\infty],T)\;\colon\; m\in\mathcal{S}\right\rbrace
\]
such that whenever $n, nq\in\mathcal{S}$ with $q$ a prime, we have
\begin{equation}
{\cores}_{\cl{K}[nq]/\cl{K}[n]}(\kapinftynqs)= P_{\frk{q}}(V;{\Fr}_{\frk{q}}^{-1})\,\kapinftyns,\nonumber
\end{equation}
where $\mathfrak{q}$ is a prime of $\cl{K}$ above $q$.
\end{thmintro}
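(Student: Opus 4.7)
The plan is to produce the classes in three stages: (i) construct cohomology classes at finite level from Hirzebruch--Zagier cycles on products of modular curves and Hilbert modular surfaces; (ii) establish the tame norm relation by an explicit pushforward computation at the level of Shimura varieties; and (iii) pass to an ordinary inverse limit in the $p$-direction via Hida's idempotent. The assumption that $p$ splits in $\cl{K}$ and that $p\nmid h_{\cl{K}}$ ensures that the $p$-power tower $\cl{K}[mp^\infty]/\cl{K}[m]$ has transparent structure, while the ordinarity of $g$ is the input for the Hida-theoretic machinery.

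For step (i), for each $m\in\mathcal{S}$ and $s\geq 0$ I would pick a modular curve $X_{mp^s}$ and a Hilbert modular surface $Y_{mp^s}$ of level divisible by $mp^s$ and by the conductors of $g$ and $\psi$. The diagonal inclusion $\bb{Q}\hookrightarrow F$ induces a morphism $X_{mp^s}\to Y_{mp^s}$; combining with the identity on the first factor produces a codimension-two cycle $X_{mp^s}\hookrightarrow X_{mp^s}\times Y_{mp^s}$. Applying the $p$-adic \'etale Abel--Jacobi map and projecting onto the $(g,\theta_\psi)$-isotypic component, where $\theta_\psi$ is the weight-$k$ theta series attached to $\psi$, gives a class in $H^1(\bb{Q},\As(T_g)\otimes T_{\theta_\psi}(2-l-k/2))$, which a Shapiro-type identification (using that $\rho_{\theta_\psi}$ is induced from a character of $G_{\cl{K}}$) translates into the candidate $\kappa_{\psi,g,mp^s}\in H^1(\cl{K}[mp^s],T)$.

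For step (ii), the identity $\cores_{\cl{K}[nq]/\cl{K}[n]}(\kappa_{\psi,g,nq})=P_{\mathfrak{q}}(V;\Fr_{\mathfrak{q}}^{-1})\,\kappa_{\psi,g,n}$ is deduced from a pushforward identity between the corresponding Hirzebruch--Zagier cycles at levels $nq$ and $n$. Since $q$ splits in $F$ as $\mathfrak{q}_1\mathfrak{q}_2$, the relevant degeneracy map factors, and an explicit coset decomposition at $q$ of the corresponding congruence subgroups expresses the pushforward in terms of the Hecke operators $T_{\mathfrak{q}_i}$ together with the Hecke operator at $q$ on the elliptic side. Combining these eigenvalues on $g$ and $\theta_\psi$ with the contribution from $q$ being split in $\cl{K}$ should recover the quartic $P_{\mathfrak{q}}(V;X)$ evaluated at $X=\Fr_{\mathfrak{q}}^{-1}$, via the tensor-induction description of $\As(V_g)$.

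For step (iii), applying Hida's idempotent $e_{\ord}$ to the family $\{\kappa_{\psi,g,mp^s}\}_s$ and using that ordinary trace commutes with $e_{\ord}$ yields a norm-compatible sequence in $H^1_{\Iw}(\cl{K}[mp^\infty],T)$ whose inverse limit is $\kappa_{\psi,g,m,\infty}$. Membership in $\Sel_{\bal}$ reduces to checking local conditions at the two primes of $\cl{K}$ above $p$, where the ordinary filtration on $V_g$ and the geometric support of the cycle provide the desired factorisation. The \emph{main obstacle} is the tame norm relation: the four-dimensionality of $V$ makes $P_{\mathfrak{q}}(V;X)$ a quartic, so the pushforward computation must track simultaneously the interplay of the degeneracy maps on both the modular-curve and Hilbert-modular-surface factors, together with the $\psi$-twist. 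The required technical input is a careful local analysis at $q$ of the elliptic and Hilbert Hecke algebras and their assembly into the Asai representation.
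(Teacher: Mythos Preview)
Your three-stage outline (Hirzebruch--Zagier cycles, degeneracy-map pushforwards, Hida's ordinary projector) matches the paper's overall strategy, and you correctly flag the tame pushforward as the crux. There are, however, two genuine gaps.

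\textbf{Passage to ring class fields.} In step~(i) you project onto the $\theta_\psi$-isotypic piece and invoke ``a Shapiro-type identification (using that $\rho_{\theta_\psi}$ is induced from a character of $G_{\cl{K}}$)'' to land in $H^1(\cl{K}[mp^s],T)$. But Shapiro for $\rho_{\theta_\psi}\simeq\Ind_{\cl{K}}^{\bb{Q}}\psi_{\frk{P}}^{-1}$ only gives classes over $\cl{K}$; it cannot by itself produce the tame tower $\{\cl{K}[m]\}_m$. In the paper this is done by taking $\hf$ to be the \emph{CM Hida family} through $\theta_\psi$, working on modular curves of tame level $n^2$ (not $n$), and appealing to the Lei--Loeffler--Zerbes isomorphisms
\[
\nu_n:\Lambda[R_n]\otimes_{\phi_n}e'H^1_{\et}(Y_{\infty,n,\overline{\bb{Q}}},\cl{O})\;\overset{\sim}{\longrightarrow}\;\Ind_{\cl{K}[n]}^{\bb{Q}}\Lambda(\psi_{\frk{P}}^{-1}\cdots)
\]
of Lemma~\ref{lemma:LLZ} (based on \cite{LLZ}). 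The hypotheses $p$ split in $\cl{K}$ and $p\nmid h_{\cl{K}}$ enter precisely here, and the compatibility of the $\nu_n$ with the explicit norm maps $\cl{N}_n^{nq}$ is what converts your Hecke-theoretic pushforward into an honest corestriction over $\cl{K}[nq]/\cl{K}[n]$. Without this input you cannot even define the family of classes indexed by $n\in\mathcal{S}$.

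\textbf{The Euler factor is not exact.} Even granting the above, the explicit pushforward computation (Lemma~\ref{lemma:norm-relations}, Proposition~\ref{prop:norm-relations1}) does \emph{not} yield $P_{\frk{q}}(V;\Fr_{\frk{q}}^{-1})$ on the nose: after assembling the Hecke eigenvalues one only obtains a congruence to the desired Euler factor modulo $q-1$. The paper then applies Rubin's universal Euler system lemma \cite[Lem.~9.6.1]{Rub} to modify the classes so that the exact relation holds, followed by the twisting result \cite[Thm.~6.3.5]{Rub} to remove an auxiliary $\kappa_{\ac}^{k-2}$-twist. Finally, membership in $\Sel_{\bal}$ is not argued via a direct local check with the ordinary filtration as you suggest, but is deduced from the purity theorem of Nekov\'a\v{r}--Nizio{\l} \cite[Thm.~5.9]{NN16} for the image of the $p$-adic Abel--Jacobi map.
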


It follows from our construction that the Euler system classes of Theorem~\ref{thmA} are interpolated $p$-adically along the parallel weight Hida family $\hg$ passing through the ordinary $p$-stabilization of $g$. Indeed, the results of Section~\ref{sec:cycles} yield a construction of two-variable families of cohomology classes
\[
\kappa_\infty(\hf,\hg) \in H^1(\mathbb Q, \mathbb V_{\hf \hg}^{\dag})
\]
where $\hf$ (resp. $\hg$) is a Hida family of elliptic modular forms (parallel weight Hilbert modular forms), and $\mathbb V_{\hf \hg}^{\dag}$ is a self-dual twist of the tensor product $\mathbb V_{\hf}\otimes\mathrm{As}(\mathbb V_{\hg})$ for the standard big Galois representations $\mathbb V_{\hf}$ and $\mathbb V_{\hg}$ associated to $\hf$ and $\hg$, respectively. The construction of the classes $\kappa_\infty(\hf,\hg)$ is based on the study of the $p$-adic \'etale Abel--Jacobi image of certain generalized Hirzebruch--Zagier cycles obtained from modular curves of varying $p$-power level diagonally embedded into the product with Hilbert modular surfaces, and is inspired by the approach by Darmon--Rotger \cite{DR3} 
in the setting of triple products of elliptic modular forms. 

The classes $\kapinftyns$ of Theorem~\ref{thmA} are obtained from a variant of the classes $\kappa_\infty(\hf,\hg)$ with added tame level at $n$ by taking $\hf$ to be a CM Hida family attached to $\psi$  
and $\hg$ to be the parallel weight Hida family passing through $g$. The bulk of the work then goes into the proof of the tame Euler system norm relations, which is obtained by a careful study of the image under various degeneracy maps of the generalized  Hirzebruch--Zagier cycles used in the construction. It would be interesting to study whether an alternative proof of our Euler systems norm relations can be given following the automorphic approach of Loeffler--Skinner--Zerbes \cite{LSZ-JEMS}, as done by Grossi \cite{Grossi-AF} in the related setting of Asai--Flach classes. 

\subsection{Applications}

By Kolyvagin's Euler system machinery, in the form developed by Jetchev--Nekov\'{a}\v{r}--Skinner \cite{JNS} in the anticyclotomic setting, we deduce bounds on Selmer groups from the non-triviality of our Euler system. Our first result in this direction gives new evidence towards the Bloch--Kato conjecture \cite{BK} in rank one. 

In the next theorem, we use `big image' to refer to Hypothesis~(HS) in  Section~\ref{sec:big-image}; in Proposition~\ref{prop:suff} we shall give sufficient conditions under which this is satisfied. As a piece of notation, write 
\[
\kappa_{\psi,g,n}\in\mathrm{Sel}_{\bal}(\cal{K}[n],T)
\]
for the image of 
$\kappa_{\psi,g,n,\infty}$ under the corestriction $H^1_{\Iw}(\cl{K}[np^\infty],T) \rightarrow H^1(\cl{K}[n],T)$. 



\begin{thmintro}[Theorem~\ref{thm:BK}]\label{thmB}
Let the setting and hypotheses be as in Theorem~\ref{thmA}, and assume in addition that $V$ has big image.
Let $\kap:=\kapone$. If $k<2l$, then the following implication holds:
\[
\kap \neq 0 \quad \Longrightarrow \quad \dim_E\mathrm{Sel}(\cl{K},V)=1,
\]
where $\Sel(\cl{K},V)\subset H^1(G_{\cl{K}},V)$ is the Bloch--Kato Selmer group.
\end{thmintro}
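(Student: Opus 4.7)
The plan is to deduce Theorem~B from the abstract anticyclotomic Kolyvagin machinery of Jetchev--Nekov\'a\v{r}--Skinner \cite{JNS}, whose input is precisely a system of cohomology classes satisfying the tame norm relations established in Theorem~A, together with a big image condition and a Panchishkin-type ordinary filtration at the primes above $p$. Concretely, one verifies the hypotheses of the JNS bound, then extracts the conclusion by combining a lower bound coming from the nonvanishing of $\kap$ with the upper bound coming from the derivative classes.

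First, I would check the inputs to the JNS framework. The representation $V$ is conjugate self-dual with respect to complex conjugation of $\cl{K}/\bb{Q}$, as built into the definition via the twist by $\psi_{\frk{P}}^{-1}(2-l-k/2)$ and the self-duality condition on central characters. The hypothesis that $p$ splits in $\cl{K}$, combined with the $p$-ordinarity of $g$, produces a Panchishkin filtration of $V$ at each prime of $\cl{K}$ above $p$ which defines the Greenberg-type local conditions matching the balanced Selmer group. In the range $k<2l$, the balanced local conditions at $p$ coincide with those of Bloch--Kato, so under corestriction $H^1_{\Iw}(\cl{K}[p^\infty],T)\to H^1(\cl{K},T)$ the class $\kap=\kapone$ lies in $\mathrm{Sel}(\cl{K},V)$ after extending scalars. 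If $\kap\neq 0$, this immediately gives $\dim_E\mathrm{Sel}(\cl{K},V)\geq 1$.

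Second, for the matching upper bound $\dim_E\mathrm{Sel}(\cl{K},V)\leq 1$, I would invoke the Kolyvagin derivative argument in the JNS form. For each squarefree product $n\in\mathcal{S}$ of Kolyvagin primes (primes split in both $F$ and $\cl{K}$ where the Euler factor $P_{\frk{q}}(V;X)$ satisfies appropriate congruences modulo powers of $p$), one forms derivative classes $D_n\kapns$ using the Kolyvagin operators $D_n=\prod_{q\mid n}D_q$; the norm relations of Theorem~A translate into the statement that the image of $D_n\kapns$ in cohomology modulo a suitable power of $p$ satisfies the local conditions of a modified Selmer group that is unramified outside $n$ and transverse at primes dividing $n$. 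Under Hypothesis~(HS), the residual image is large enough that Chebotarev produces Kolyvagin primes $q$ whose Frobenius classes realize any prescribed element of $T/\mathfrak{P}T$ and its dual; global duality (Poitou--Tate) then converts the nontriviality of $\kap$ into an upper bound of $1$ on the dimension of $\mathrm{Sel}(\cl{K},V)$, by the usual inductive argument on the length of the Kolyvagin system.

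The main obstacle is the verification that Hypothesis~(HS) really supplies the big image required by \cite{JNS}: one needs sufficient freeness of the residual representation to generate, via Chebotarev, primes with prescribed Frobenius behavior on both $T/\mathfrak{P}T$ and its Tate twist, and this is exactly what (HS) is designed to ensure (with Proposition~\ref{prop:suff} providing checkable sufficient conditions). A secondary, more technical point is the compatibility of the local behavior of $\kapns$ at primes $\ell\mid n$ with the transverse condition needed to run the derivative argument; this follows formally from the tame norm relations, but requires careful bookkeeping because the Euler factor $P_{\frk{q}}(V;\Fr_{\frk{q}}^{-1})$ for the Asai representation has degree four rather than two, so one must track the action of $\Fr_{\frk{q}}$ on the four-dimensional local representation and verify that its interaction with the derivative operator produces the correct finite-singular comparison.
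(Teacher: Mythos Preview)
Your proposal is correct and follows essentially the same route as the paper: both invoke the Jetchev--Nekov\'a\v{r}--Skinner machinery applied to the Euler system of Theorem~A, use the big image hypothesis (HS) to verify the technical inputs, and then identify the balanced Selmer group with the Bloch--Kato Selmer group in the range $k<2l$. The paper treats the JNS result as a black box (citing \cite[Thm.~8.3]{ACR}) and appeals to \cite[Lem.~9.1]{ACR} for the Selmer group comparison, whereas you unpack the Kolyvagin derivative mechanism more explicitly; but the underlying argument is the same, and your concern about the degree-four Euler factor is not an actual obstacle since the JNS framework is built to handle this generality.
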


We also deduce applications to the Iwasawa Main Conjecture for $V$. More precisely, let
\[
\mathrm{Sel}_{\bal}(\cal{K}[np^\infty],A)\subset H^1(\cal{K}[np^\infty],A)
\]
be the balanced Selmer group dual to $\mathrm{Sel}_{\bal}(\cal{K}[np^\infty],T)$, where $A=\mathrm{Hom}_{\bb{Z}_p}(T,\mu_{p^\infty})$, and let $X_{\bal}(\cal{K}[np^\infty],A)$ 
denote its Pontryagin dual. Write $p\cal{O}_{\cal{K}}=\frk{p}\overline{\frk{p}}$, with $\frk{p}$ the prime of $\cal{K}$ above $p$ induced by $\iota_p$, and put 
$\Lambda_{\hf}=\cal{O}[\![\Gamma_{\frk{p}}]\!]$, where $\Gamma_{\frk{p}}$ is the Galois group of the unique $\bb{Z}_p$-extension of $\cal{K}$ unramified outside $\frk{p}$. As explained in Section~\ref{sec:norm-relations}, this can be naturally identified with the Iwasawa algebra for the anticyclotomic $\bb{Z}_p$-extension $\cal{K}[p^\infty]/\cal{K}$.

\begin{thmintro}[Theorem~\ref{thm:IMC}]\label{thmC}
Let the setting and hypotheses be as in Theorem~\ref{thmA}, and assume in addition that $V$ has big image. If the class $\kapinftys:=\kapinftyones$ is not $\Lambda_{\hf}$-torsion, then the modules $\Sel_{\bal}(\cal{K}[p^\infty],T)$ and $X_{\bal}(\cal{K}[p^\infty],A)$ have both $\Lambda_{\hf}$-rank one, and 
\[
\Char_{\Lambda_{\hf}}(X_{\bal}(\cal{K}[p^\infty],A)_{\tors})\supset \Char_{\Lambda_{\hf}}\biggl(\frac{\Sel_{\bal}(\cal{K}[p^\infty],T)}{\Lambda_{\hf} \cdot\kapinftys} \biggr)^2
\]
as ideals in $\Lambda_{\hf}\otimes_{\bb{Z}_p}\bb{Q}_p$, where the subscript $\mathrm{tors}$ denotes the $\Lambda_{\hf}$-torsion submodule.
\end{thmintro}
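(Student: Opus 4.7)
The proof plan is to invoke the anticyclotomic Euler system machine of Jetchev--Nekov\'a\v{r}--Skinner \cite{JNS} applied to the Iwasawa-theoretic Euler system $\{\kappa_{\psi,g,m,\infty}\}$ constructed in Theorem~A. Concretely, I would work over $\Lambda_\mathfrak{p} = \cal{O}[\![\Gamma_\mathfrak{p}]\!]$ and view $\kappa_{\psi,g,\infty}$ as living in the rank-one quotient of $H^1_{\Iw}(\cl{K}[p^\infty],T)$ cut out by the balanced Selmer condition, after first verifying that the Euler system classes indeed land in $\Sel_{\bal}$ (this was already encoded in the statement of Theorem~A).

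The first step is to check that the hypotheses needed to apply the JNS framework are satisfied: (i) the big image hypothesis (HS) is assumed; (ii) the self-dual twist $V$ is conjugate self-dual for $G_\cl{K}$ by construction; (iii) the splitting of $p$ in $\cl{K}$ and the ordinarity of $g$ at $p$ (together with $p\nmid h_\cl{K}$) give the required ordinary local structure at the two primes above $p$, so that $\Sel_{\bal}$ fits into the JNS setup of a Greenberg-type Selmer group admissible for anticyclotomic deformation; (iv) the Euler factors $P_\mathfrak{q}(V;\Fr_\mathfrak{q}^{-1})$ appearing in the norm relation of Theorem~A match the ones required by JNS for primes $q$ split in $\cl{K}$. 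Once these are in place, the classical Kolyvagin derivative construction produces, from $\kappa_{\psi,g,m,\infty}$ for $m\in\cal{S}$, a system of cohomology classes over $\cl{K}[p^\infty]$ whose local behaviour at the Kolyvagin primes is controlled by residual traces of Frobenius.

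The main output I would extract is then twofold. First, a Chebotarev-plus-global-duality argument, run uniformly over $\Lambda_\mathfrak{p}$ as in \cite{JNS}, shows that if $\kappa_{\psi,g,\infty}$ is non-torsion then $\Sel_{\bal}(\cl{K}[p^\infty],T)$ has $\Lambda_\mathfrak{p}$-rank exactly one, with $\Lambda_\mathfrak{p}\cdot\kappa_{\psi,g,\infty}$ spanning a finite-index submodule (up to $\Lambda_\mathfrak{p}$-torsion); the dual Pontryagin module $X_{\bal}(\cl{K}[p^\infty],A)$ then also has $\Lambda_\mathfrak{p}$-rank one by Poitou--Tate global duality together with the standard computation of the Euler--Poincar\'e characteristic for the balanced local conditions. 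Second, the JNS divisibility theorem yields
\[
\Char_{\Lambda_\mathfrak{p}}\bigl(X_{\bal}(\cl{K}[p^\infty],A)_{\tors}\bigr)\supset\Char_{\Lambda_\mathfrak{p}}\biggl(\frac{\Sel_{\bal}(\cl{K}[p^\infty],T)}{\Lambda_\mathfrak{p}\cdot\kappa_{\psi,g,\infty}}\biggr)^2
\]
as ideals in $\Lambda_\mathfrak{p}\otimes_{\bb{Z}_p}\bb{Q}_p$, with the square arising, as in \cite{JNS}, from the fact that one bounds the torsion part of the dual Selmer group by the square of the index of the Euler system class, because derivative classes control only the minus part of the dual at Kolyvagin primes split in $\cl{K}$.

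The main obstacle I expect is the careful matching of the local conditions: the balanced Selmer group $\Sel_{\bal}$ of Section~\ref{subsec:Selmer} is an ordinary Greenberg condition at $\mathfrak{p}$ and a strict/relaxed condition at $\overline{\mathfrak{p}}$ (or vice versa), and one must check that this matches the local framework of \cite{JNS} exactly, so that both the rank statement and the divisibility of characteristic ideals apply verbatim. A secondary technical point is confirming that the big image hypothesis (HS) is strong enough to guarantee the existence of the Kolyvagin primes with the required residual properties in the Asai-times-CM setting; this reduces to Proposition~\ref{prop:suff} and a Chebotarev argument inside the image of $G_\cl{K}$ acting on $T/\varpi T$, which, granted (HS), poses no essential difficulty.
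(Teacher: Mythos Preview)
Your proposal is correct and follows essentially the same approach as the paper: apply the anticyclotomic Euler system machinery of \cite{JNS} to the Euler system of Theorem~\ref{thm:ES}, after noting that the big image hypothesis (HS) holds (via Proposition~\ref{prop:suff}). The paper's own proof is even terser than yours, simply citing \cite{JNS} in the form recorded in \cite[Thm.~8.5]{ACR}, whereas you unpack what the JNS argument actually involves; but the route is the same.
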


\subsection{Further directions}

We expect the classes $\kappa_\infty(\hf,\hg)$ to satisfy an explicit reciprocity law relating their images under a Perrin-Riou big logarithm map to a $p$-adic Asai $L$-function as constructed by Ishikawa \cite{Ishikawa} 
in the $p$ inert case. In particular, this would allow us to deduce a variant of Theorem~\ref{thmC} for the $\mathcal{F}$-unbalanced Selmer group $X_{\mathcal F}(\cal{K}[p^\infty],A)$ of Section~\ref{subsec:Selmer}, as well as applications to the Bloch--Kato conjecture for $V$ in analytic rank $0$ in the range $k\geq 2l$. We plan to study these problems in a sequel to this paper. 

It would also be interesting to study the implications of our Euler system in the ``degenerate'' case where $g=\mathrm{BC}(g_\circ)$ is the base-change of an elliptic modular form $g_\circ$. As observed in e.g. \cite[\S{1.3}]{Liu}, in this case the Asai representation $\mathrm{As}(V_g)$ contains $\mathrm{Sym}^2(V_{g_\circ})$ as a direct summand; following the approach in \cite{LZ-ad}, one might hope to build on this decomposition and (a suitable variant of) the Euler system of Theorem~\ref{thmA} to improve on the results obtained \cite{ACR2} towards the anticyclotomic Iwasawa Main Conjecture for the adjoint representation of $g_\circ$.



\subsection{Related works}

The Euler system constructed in this paper may be seen as an anticyclotomic analogue of the Asai--Flach Euler system of Lei--Loeffler--Zerbes \cite{LLZ-AF}. Generalized Hirzebruch--Zagier cycles similar to those constructed in this paper were studied in \cite{BF} and \cite{FJ}; following the approaches of \cite{DR1} and \cite{DR2}, respectively, these works express the syntomic Abel--Jacobi images of these cycles in terms of special values of $p$-adic Asai $L$-functions outside the range of interpolation. (Unfortunately, the results of \cite{BF} restrict to $g$ of non-parallel weight, while \cite{FJ} is subject to a conjecture on the integral cohomology of Hilbert modular surfaces.) 

In a different direction, 
Kolyvagin's methods, in a form influenced by the work of Bertolini--Darmon \cite{BD2} using level-raising congruences, were first applied in the setting of Hirzebruch--Zagier cycles in work of Y.~Liu \cite{Liu}, with applications towards the Bloch--Kato conjecture analogous to our Theorem~\ref{thmB} in the case of parallel weight $2$ (but allowing more general twisted triple products), in addition to certain counterparts in analytic rank $0$.





\subsection{Acknowledgements}

We heartily thank Michele Fornea for stimulating discussions on the construction of our Euler system classes and his detailed explanations of his earlier results on generalized Hirzebruch--Zagier cycles. We also thank David Loeffler, Chris Skinner, and Shou-Wu Zhang for several useful exchanges related to this work. 

During the preparation of this paper, F.C. was partially supported by the NSF grants DMS-2101458 and DMS-2401321; O.R. was supported by PID2023-148398NA-I00, funded by MCIU/AEI/10.13039/501100011033/FEDER, UE. 

This material is based upon work supported by the NSF grant DMS-1928930 while the  authors were in residence at MSRI/SLMath during the spring of 2023 for the program ``Algebraic Cycles, $L$-Values, and Euler Systems''.


\section{Review of Hilbert cuspforms}\label{sec:review}

In this section we summarize our conventions for Hilbert modular forms, which are mainly taken from \cite{FJ} with some minor changes. Since for the purpose of this article we are only interested in Hilbert modular forms of parallel weight, we restrict the discussion to this case.

\subsection{General notations}

Let $F$ be a totally real number field with ring of integers $\cal{O}_F$. Let $\mrm{I}_F$ denote the set of field embeddings of $F$ into $\overline{\bb{Q}}$. Note that there is a natural identification of $F_\infty \coloneqq F\otimes_\bb{Q}\bb{R}$ with $\bb{R}^{\mrm{I}_F}$.
%
%
Consider the algebraic group 
\[
G\coloneqq\mrm{Res}_{F/\bb{Q}}(\mrm{GL}_{2,F}).
\]
Let $\cl{H}$ denote the Poincar\'e upper half plane. The identity component
$G(\bb{R})^+$ of 
\[
G(\bb{R})= \mrm{GL}_2(F_\infty)\simeq \prod_{\sigma\in I_F}\GL_2(\bb{R}_\sigma)
\]
acts on $\cl{H}^{\mrm{I}_F}$ via M\"obius transformations. We denote by  $i = \sqrt{-1} \in \cl{H}$ the  square root of $-1$ belonging to $\cl{H}$ and we consider the point $\mbf{i} = (i,\dots,i) \in \cl{H}^{\mrm{I}_F}$.


\subsection{Adelic Hilbert cuspforms}

Let $K\le G(\bb{A}_{\bb{Q},f})$ be a compact open subgroup. A \textit{(holomorphic) Hilbert cuspform} of parallel weight $l$ and level $K$ is a function $f:G(\bb{A})\to\bb{C}$ that satisfies the following properties:
\begin{itemize}
\item[$\bullet$] $f(\alpha x u)=f(x)j_{l}(u_\infty,\mathbf{i})^{-1}$ for all $\alpha\in G(\bb{Q})$ and $u\in K\cdot C_{\infty}^+$, where $C_{\infty}^+$ is the stabilizer of $\mathbf{i}$ in $G(\bb{R})^+$ and the automorphy factor is defined by
 
 \[
 j_{l}\big(\gamma,z\big)=\prod_{\sigma\in I_F}(a_\sigma d_\sigma -b_\sigma c_\sigma)^{-1}(c_\sigma z_\sigma +d_\sigma)^l
 \]
 for all
$\gamma=\left(\begin{pmatrix}a_{\sigma}& b_\sigma\\
c_\sigma & d_\sigma \end{pmatrix}\right)_{\sigma\in I_F}
\in G(\bb{R})$ and $(z_\sigma)_{\sigma\in I_F}\in\cl{H}^{\mrm{I}_F}$;
\item[$\bullet$] for every finite adelic point $ x\in G(\bb{A}_{\bb{Q},f})$ the (well-defined) function $f_x:\cl{H}^{\mrm{I}_F}\to\bb{C}$ given by $f_x(z)=f(xu_\infty)j_l(u_\infty,\mathbf{i})$, where $u_\infty\in G(\bb{R})_+$ satisfies $u_\infty\mathbf{i}=z$, is holomorphic;
\item[$\bullet$] for all adelic points $x\in G(\bb{A})$ and for all additive measures on
$F \backslash\bb{A}_F$ we have
 \[\int_{F\backslash\bb{A}_F}f\bigg(\begin{pmatrix}1&a\\0&1\end{pmatrix}x\bigg)da=0;\]
\item[$\bullet$] if $F=\bb{Q}$, then for every finite adelic point $ x\in G(\bb{A}_{\bb{Q},f})$ the function $\vert \mathrm{Im}(z)^{l/2} f_x(z)\vert$ is uniformly bounded on $\cl{H}$.
\end{itemize}

We denote by $S_{l}(K;\bb{C})$ the $\bb{C}$-vector space of Hilbert cuspforms of parallel weight $l$ and level $K$.

\begin{defn}
Let $\mathfrak{N}$ be an integral $\cl{O}_F$-ideal. We define the following compact open subgroups of $G(\bb{A}_{\bb{Q},f})$:
\begin{itemize}
\item $V_0(\mathfrak{N})=\bigg\{\begin{pmatrix}a&b\\c&d\end{pmatrix}\in G(\widehat{\bb{Z}})\;\bigg\lvert \; c\in\mathfrak{N}\widehat{\cal{O}}_F\bigg\}$,
\item $V_1(\mathfrak{N})=\bigg\{\begin{pmatrix}a&b\\c&d\end{pmatrix}\in V_0(\mathfrak{N})\;\bigg\lvert\; d\equiv1 \pmod{\mathfrak{N}\widehat{\cal{O}}_F}\bigg\}$,
\item $V^1(\mathfrak{N})=\bigg\{\begin{pmatrix}a&b\\c&d\end{pmatrix}\in V_0(\mathfrak{N})\;\bigg\lvert\; a\equiv1 \pmod{\mathfrak{N}\widehat{\cal{O}}_F}\bigg\}$,
\item $V(\mathfrak{N})=\bigg\{\begin{pmatrix}a&b\\c&d\end{pmatrix}\in V_0(\mathfrak{N})\;\bigg\lvert\; a,d \equiv1; \, b,c \equiv 0 \pmod{\mathfrak{N}\widehat{\cal{O}}_F}\bigg\}$.
\end{itemize}
\end{defn}
We will frequently write $S_{l}(\frk{N};\bb{C})$ instead of $S_l(V^1(\frk{N});\bb{C})$.

\begin{rk}
    There is a notion of $q$-expansion of Hilbert modular forms (see, for instance, \cite[\S1]{pHida}), and, for any subring $R\subseteq \bb{C}$, we denote by $S_{l}(\frk{N};R)$ the $R$-module of cuspforms in $S_{l}(\frk{N};\bb{C})$ with Fourier coefficients in $R$.
\end{rk}

\subsection{Hecke Theory}

Let $K \le G(\bb{A}_{\bb{Q},f})$ be a compact open subgroup. Suppose that $V(\frk{N})\le K$.

For every $g \in G(\bb{A})$, we define a double coset operator $[KgK]$ acting on the space of adelic Hilbert cuspforms of level $K$. Let
\[
K g K
=
\coprod_i \gamma_i K,
\]
be the decomposition of $KgK$ into left $K$-cosets. Then the action of $[KgK]$ on an adelic Hilbert cuspform $f$ is given by 
\begin{equation*}
\big([KgK]f\big)   (x)
=
\sum_i f(x\gamma_i).
\end{equation*}

\begin{defn}
Let $\mathfrak{q}$ be a prime ideal in $\cl{O}_F$ and let $\varpi_\mathfrak{q}$ be a uniformizer in $\cal{O}_{F,\mathfrak{q}}$. With a slight abuse of notation, we also denote by $\varpi_{\frk{q}}$ the idèle with $\frk{q}$-component equal to $\varpi_\frk{q}$ and all other components equal to $1$. The Hecke operator $T(\varpi_{\frk{q}})$ on $S_{l}(K;\bb{C})$ is defined by
\[
T(\varpi_\mathfrak{q}) =
\Big[ K\begin{pmatrix} 1 & 0 \\ 0 & \varpi_\mathfrak{q} \end{pmatrix} K \Big].
\]
If $V^1(\frk{N})\leq K$ or $\frk{q}\nmid \frk{N}$, the definition does not depend on the choice of uniformizer and we may denote this Hecke operator simply by $T_{\frk{q}}$.
\end{defn}

\begin{defn}
Let $z \in Z_G(\bb{A}_{\bb{Q},f})$ (where $Z_G$ denotes the center of $G$). The diamond operator $\langle z \rangle$ on $S_{l}(K;\bb{C})$ is defined by $(\langle z \rangle f)(x) = f(xz)$.
\end{defn}

Put $\mrm{cl}_F^+(\frk{N})=F^\times\backslash\bb{A}_{F}^\times/F^{\times,+}_\infty (1+\frk{N}\widehat{\cl{O}_F})^\times$. Given a character 
\[
\psi:\mrm{cl}_F^+(\frk{N})\longrightarrow \bb{C}^\times,
\]
we denote by $S_l(\frk{N},\psi,\bb{C})$ the submodule of $S_l(\frk{N},\bb{C})$ on which the diamond operator $\langle z\rangle$ acts by $\psi(z)\vert z\vert_{\bb{A}_{F,f}}^{2-l}$ for all $z\in\bb{A}_{F,f}^\times$. 
Cuspforms in $S_l(\frk{N},\psi,\bb{C})$ are said to have nebentype $\psi$.

\begin{defn}
A cuspform $g\in S_{l}(\frk{N};\bb{C})$ is said to be an \emph{eigenform} if it is an eigenvector for all the Hecke operators of the form $T_{\frk{q}}$, with $\frk{q}$ a finite prime, and for all diamond operators $\langle z\rangle$, with $z\in Z_G(\bb{A}_{\bb{Q},f})$.
\end{defn}

We define the operator $T_p=\prod_{v\mid p} T_v$ acting on $S_{l}(\frk{N};\bb{C})$. If $p\mid \frk{N}$, we may also denote this operator by $U_p$.

\begin{defn}
    An eigenform $g\in S_{l}(\frk{N};\bb{C})$ is said to be \emph{$p$-ordinary} if the eigenvalue of $T_p$ acting on $g$ is a $p$-adic unit (under our fixed embedding $\iota_p:\overline{\bb{Q}}\hookrightarrow\overline{\bb{Q}}_p$).
\end{defn}

\subsection{Galois representations}

Let $g\in S_l(\frk{N},\psi;\bb{C})$ be an eigenform. For a sufficiently large finite extension $L/\bb{Q}_p$, we denote by
\[
\rho_{g}:G_F\longrightarrow \GL(V_g)=\GL_2(L)
\]
the $p$-adic Galois representation attached to $g$ by work of Eichler--Shimura, Deligne, Carayol, Wiles, Taylor. It is unramified outside $p\frk{N}$, and, if $\Fr_\frk{q}$ is an arithmetic Frobenius element at a prime $\frk{q}\nmid p\frk{N}$ and $\varpi_{\frk{q}}$ is a uniformizer of $\cl{O}_{F,\frk{q}}$, then
\[
\det(1-\rho_g(\Fr_\frk{q})X)=1-a_\frk{q}(g)X+\psi(\varpi_\frk{q})^{-1}\vert\frk{q}\vert^{1-l}X^2.
\]

Suppose that $g$ is $p$-ordinary and $l\geq 2$. Let $\frk{p}$ be a prime of $F$ above $p$. Let $\alpha_{\frk{p}}(g)$ denote the unit root of the Hecke polynomial $X^2-a_{\frk{p}}(g)X+\psi(\varpi_\frk{p})^{-1}\vert\frk{p}\vert^{1-l}$. Then,
\[
\rho_g\vert_{G_{F_\frk{p}}} \simeq \begin{pmatrix} \epsilon_1 & \ast \\ 0 & \epsilon_2\end{pmatrix},
\]
where $\epsilon_2$ is the unramified character such that $\epsilon_2(\Fr_{\frk{p}})=\alpha_\frk{p}(g)$.

\subsubsection{Asai representations}
When $F$ is a real quadratic field, we denote by
\[
\mrm{As}(\rho_g)=\otimes\mbox{-}\mathrm{Ind}_{F}^\bb{Q}\rho_g
\]
the $p$-adic Galois representation obtained as the tensor-induction of $\rho_g$ from $F$ to $\bb{Q}$. Suppose that $g$ is $p$-ordinary, and put
\[
\alpha_p(g)=
\begin{cases}
\alpha_{\frk{p}_1}(g)\alpha_{\frk{p}_2}(g)&\textrm{if $(p)=\frk{p}_1\frk{p}_2$ splits in $F$,}\\[0.2em]
\alpha_{\frk{p}}(g)&\textrm{if $(p)=\frk{p}$ is inert in $F$}.
\end{cases}
\]
Then the representation $\mrm{As}(\rho_g)\vert_{G_{\bb{Q}_p}}$ is endowed with a three-step filtration
\[
\mrm{As}(V_g)=\Fil^0 \mrm{As}(V_g) \supset \Fil^1 \mrm{As}(V_g) \supset \Fil^2\mrm{As}(V_g) \supset \Fil^3\mrm{As}(V_g)=0
\]
with graded pieces of dimensions $1$, $2$, and $1$, respectively, and the graded piece $\Gr^0 \mrm{As}(V_g)=\Fil^0\mrm{As}(V_g)/\Fil^1\mrm{As}(V_g)$ is unramified with arithmetic Frobenius acting as multiplication by $\alpha_p(g)$ (see \cite[Cor. 9.2.2]{LLZ-AF}).

\subsection{Hida families}\label{sect Hida families}

Let $K$ be a compact open subgroup of $\GL_2(\widehat{\cl{O}_F})$. Suppose that $V^1(\frk{N})\leq K$ for some integral ideal $\frk{N}$ of $\cl{O}_F$ coprime to $p$. For $\alpha\geq 1$, let $K^1(p^\alpha)=K\cap V^1(p^\alpha)$. Let $\cl{O}$ denote the ring of integers of a sufficiently large finite extension $L/\bb{Q}_p$. The projective limit of $p$-adic Hecke algebras
\[
\frk{h}_F(K;\cl{O})\coloneqq\varprojlim_\alpha h_{l}(K^1(p^\alpha); \cl{O})
\qquad
\text{acts on}
\qquad
\varinjlim_\alpha S_{l}(K^1(p^\alpha); \cl{O})
\]
 through the Hecke operators $T_\frk{q} = \varprojlim_\alpha T_\frk{q}$ and $\langle z\rangle=\varprojlim_\alpha \langle z\rangle$ and is independent of the weight $l$. Since $\mathfrak{h}_F(K; \mathcal O)$ is a compact ring, it can be decomposed as a direct sum of algebras
\[
\mathfrak{h}_F(K; \cl{O}) = \mathfrak{h}^{\ord}_F(K; \cl{O}) \oplus \mathfrak{h}_F^{\text{ss}}(K; \cl{O}),
\]
so that $T_p$ is invertible in $\mathfrak{h}_F^{\ord}(K; \cl{O})$ and topologically nilpotent in $\mathfrak{h}_F^{\text{ss}}(K; \cl{O})$. We denote by
\[
e_{\ord} = \underset{n \to \infty}{\lim} T_p^{n!}
\]
the idempotent corresponding to the ordinary part $\mathfrak{h}^{\ord}_F(K; \cl{O})$.

Let $\Lambda^F=\cl{O}[\![1+p\cl{O}_{F,p}]\!]$. There is a natural embedding $\Lambda^F\longmono \frk{h}_F(K;\cl{O})$ given by $z\mapsto \langle z\rangle$. As shown in \cite[Thm. 2.4]{nearlyHida}, the ordinary Hecke algebra $\mathfrak{h}_F^{\ord}(K; \cl{O})$ is finite and torsion-free over $\Lambda^F$.

\begin{defn}\label{def I-adic cuspforms}
For any $\Lambda^F$-algebra  $\mathbb I$, the space of ordinary $\mathbb I$-adic cuspforms of tame level $K$ is
\[
\bar{\mathbb S}_F^{\ord}(K;\mathbb I) \coloneqq
\mrm{Hom}_{\Lambda^F\mbox{-}\mrm{mod}} \big( \frk{h}_F^{\ord}(K^1(p^\infty); \cl{O}), \mathbb I \big).
\]
When an $\mathbb{I}$-adic cuspform is also a $\Lambda^F$-algebra homomorphism, we call it a \emph{Hida family}.
\end{defn}

Let $\overline{\cl{E}}^+_{\frk{N}p}$ denote the closure in $\cl{O}_{F,p}^\times$ of the set of totally positive units in $\cl{O}_F$ which are congruent to $1$ modulo $\frk{N}p$. Then, as in \cite[eq.~(16)]{FJ}, there is a canonical decomposition
\begin{align*}
    \mrm{cl}_F^+(\frk{N}p^\infty) &\overset{\simeq}\longrightarrow\overline{\cl{E}}^+_{\frk{N}p}\backslash (1+p\cl{O}_{F,p})\times \mrm{cl}_F^+(\frk{N}p) \\
    z & \longmapsto (\xi_z,\theta(z))
\end{align*}
For any finite order character $\chi:\mrm{cl}_F^+(\frk{N}p^\infty)\rightarrow \cl{O}^\times$, we let $\bar{\mathbb S}_F^{\ord}(K,\chi;\bb{I})$ denote the module of ordinary $\bb{I}$-adic cuspforms $\scr{G}$ of level $K$ such that 
\[
\scr{G}(\langle z\rangle)=\chi(z)[\xi_z].
\]
An $\bb{I}$-adic cuspform in $\bar{\mathbb S}_F^{\ord}(K,\chi;\mathbb I)$ will be said to have character $\chi$.

\begin{defn}\label{def:arithmetic-point}
Let $\psi: 1+p\cl{O}_{F,p} \rightarrow \cl{O}^\times$ be a finite order character. For any non-negative integer $l$, the homomorphism $1+p\cl{O}_{F,p}\rightarrow \cl{O}^\times$ defined by $u\mapsto \psi(u)N_{F_p/\bb{Q}_p}(u)^{2-l}$ induces a $\cl{O}$-algebra homomorphism 
\[
\mrm{P}_{l,\psi}:\Lambda^F\longrightarrow \cl{O}.
\]
For a $\Lambda^F$-algebra $\mathbb I$, the set of \emph{arithmetic points}, denoted by $\cal{A}(\mathbb{I})$, is the subset of $\mrm{Hom}_{\cl{O}\mbox{-}\mrm{alg}}(\mathbb{I},\overline{\bb{Q}}_p)$ consisting of homomorphisms that coincide with some $\mrm{P}_{l,\psi}$ when restricted to $\Lambda^F$.
\end{defn}

Note that if we specialize a Hida family $\scr{G}\in \bar{\mathbb S}_F^{\ord}(K,\chi;\bb{I})$ at an arithmetic point $\nu\in\cal{A}(\mathbb{I})$ lying above $\mrm{P}_{l,\psi}$, we obtain an eigenform of parallel weight $l$, level $K$, and nebentype defined by $z\mapsto \chi(z)\psi(\xi_z)N_{F_p/\bb{Q}_p}(\xi_z)^{2-l}\vert z\vert^{l-2}$ for $z\in \bb{A}_{F,f}^\times$.

\subsection{Big Galois representations}

Let $\hg\in \bar{\mathbb S}_F^{\ord}(\frk{N};\bb{I}_{\hg})$ be a Hida family. Let $F_{\hg}$ denote the field of fractions of $\bb{I}_{\hg}$. We denote by
\[
\rho_{\hg}:G_F\longrightarrow \GL(\bb{V}_{\hg})\simeq\GL_2(F_{\hg})
\]
the big Galois representation attached to $\hg$. This representation is unramified outside $p\frk{N}$, and, if $\Fr_\frk{q}$ is an arithmetic Frobenius element at a prime $\frk{q}\nmid p\frk{N}$ and $\varpi_{\frk{q}}$ is a uniformizer of $\cl{O}_{F,\frk{q}}$, then
\[
\det(1-\rho_{\hg}(\Fr_{\frk{q}})X)=1-\hg(T_{\frk{q}})X+\hg(\langle \varpi_{\frk{q}}\rangle^{-1})\vert \frk{q}\vert^{-1} X^2.
\]
Moreover, for any prime $\frk{p}$ of $F$ above $p$,
\[
\rho_{\hg}\vert_{G_{F_\frk{p}}} \simeq \begin{pmatrix} \bm{\epsilon}_1 & \ast \\ 0 & \bm{\epsilon}_2\end{pmatrix},
\]
where $\bm{\epsilon}_2$ is the unramified character defined by  $\bm{\epsilon}_2(\Fr_{\frk{p}})=\hg(T_\frk{p})$. Thus we have a two-step filtration
\[
\bb{V}_{\hg}=\Fil^0 \bb{V}_{\hg} \supset \Fil^1 \bb{V}_{\hg} \supset \Fil^2 \bb{V}_{\hg} =0
\]
of $G_{F_\frk{p}}$-modules with  $\Fil^1\bb{V}_{\hg}$ having rank one, and the graded piece $\Gr^0 \bb{V}_{\hg} =\Fil^0 \bb{V}_{\hg}/\Fil^1 \bb{V}_{\hg}$ is unramified with arithmetic Frobenius acting as multiplication by $\hg(T_\frk{p})$.

For each arithmetic point $\nu:\bb{I}_{\hg}\rightarrow \overline{\bb{Q}}_p$, the representation $\bb{V}_{\hg}\otimes_{\bb{I}_{\hg},\nu} \overline{\bb{Q}}_p$ recovers the representation attached to the $\nu$-specialization $\hg_\nu$ of $\hg$.



\subsubsection{Big Asai representations}
When $F$ is a real quadratic field, we denote by
\[
\mrm{As}(\rho_{\hg})=\otimes\mbox{-}\mathrm{Ind}_{F}^\bb{Q}(\rho_{\hg})
\]
the big Galois representation obtained as the tensor-induction of $\rho_{\hg}$ from $F$ to $\bb{Q}$. The representation $\mrm{As}(\rho_{\hg})\vert_{G_{\bb{Q}_p}}$ is endowed with a three-step filtration
\[
\mrm{As}(\bb{V}_{\hg})=\Fil^0 \mrm{As}(\bb{V}_{\hg}) \supset \Fil^1 \mrm{As}(\bb{V}_{\hg}) \supset \Fil^2\mrm{As}(\bb{V}_{\hg}) \supset \Fil^3\mrm{As}(\bb{V}_{\hg})=0
\]
in which the graded pieces have dimensions $1$, $2$ and $1$, respectively, and the $G_{\bb{Q}_p}$-action on the graded piece $\Gr^0 \mrm{As}(\bb{V}_g)=\Fil^0\mrm{As}(\bb{V}_{\hg})/\Fil^1\mrm{As}(\bb{V}_{\hg})$ is unramified, with arithmetic Frobenius acting as multiplication by $\hg(T_p)$.

\subsection{Selmer conditions}\label{subsec:Selmer}

Let $F$ be a real quadratic field. Let 
\[
\hf\in \bar{\bb{S}}_{\bb{Q}}^{\ord}(N_f,\chi_{\hf};\bb{I}_{\hf}),\quad\hg\in \bar{\bb{S}}_F^{\ord}(\frk{N}_g,\chi_{\hg};\bb{I}_{\hg})
\]
be Hida families passing through the $p$-stabilizations of an elliptic modular form $f\in S_k(N_f,\chi_f;\bb{C})$ and a Hilbert modular form $g\in S_l(\frk{N}_g,\chi_g;\bb{C})$, respectively. Assume that
\[
\chi_f\chi_{g}\vert_{\bb{A}_{\bb{Q},f}^\times}=1.
\]
Let $\mathbb V_{\hf}$ and $\mathbb V_{\hg}$ be the Galois representations attached to $\hf$ and $\hg$, and let $\mrm{As}(\bb{V}_{\hg})$ be the Asai representation introduced above. We decompose the cyclotomic character $\epsilon_{\cyc}:G_\bb{Q}\rightarrow \bb{Z}_p^\times=\mu_{p-1}\times (1+p\bb{Z}_p)$ as 
\[
\epsilon_{\cyc}=\omega_{\cyc}\theta_{\cyc},
\]
with $\omega_{\cyc}$ and $\theta_{\cyc}$ taking values in $\mu_{p-1}$ and $1+p\bb{Z}_p$, respectively. Let 
\[
\bm{\kappa}^{1/2}:G_{\bb{Q}}\rightarrow (\Lambda\otimes \Lambda^F)^\times
\]
denote the character defined by $\sigma\mapsto [\theta_{\cyc}(\sigma)^{1/2}]\otimes [\theta_{\cyc}(\sigma)^{1/2}]$, and put 
\[
\bm{\eta}=\omega_{\cyc}^{3-k/2-l}:G_{\bb{Q}}\longrightarrow \mu_{p-1}\subset \bb{Z}_p^\times.
\]

\begin{defn}
Let $\mathbb V_{\hf \hg}^{\dag}$ by the twist of $\mathbb V_{\hf} \hat \otimes \As(\mathbb V_{\hg})$ given by
\[
\mathbb V_{\hf \hg}^{\dag} \coloneqq \mathbb V_{\hf} \hat \otimes \As(\mathbb V_{\hg})(\Xi_{\hf\hg}),
\]
where  $\Xi_{\hf\hg} = \epsilon_{\cyc}^{-1} \bm{\eta}\bm{\kappa}^{-1/2}:G_{\bb{Q}}\rightarrow (\Lambda\otimes \Lambda^F)^\times$. 
\end{defn}
%
%
Arising from the filtration on $\mathbb V_{\hf}$ and $\As(\mathbb V_{\hg})$ discussed above, the Galois representation 
$\mathbb V_{\hf\hg}^{\dag}$ is naturally endowed with a $G_{\bb{Q}_p}$-stable filtration
\[ \mathbb V_{\hf\hg}^{\dag} = \mathscr{F}^0\mathbb V_{\hf\hg}^\dag \supset \mathscr{F}^1\mathbb V_{\hf\hg}^\dag \supset \mathscr{F}^2\mathbb V_{\hf\hg}^\dag \supset \mathscr{F}^3\mathbb V_{\hf\hg}^\dag \supset \mathscr{F}^4\mathbb V_{\hf\hg}^\dag = 0,
\]
where the submodules have rank 8, 7, 4, 1 and 0, respectively. In particular, the piece of most interest to us in this paper is
\[ 
\mathscr{F}^2\mathbb V_{\hf\hg}^\dag :=\bigl(\Fil^1 \bb{V}_{\hf} \hat \otimes \Fil^1 \mrm{As}(\bb{V}_{\hg}) + \Fil^0 \bb{V}_{\hf} \hat \otimes \Fil^2 \mrm{As}(\bb{V}_{\hg})\bigr)(\Xi_{\hf\hg}).
\]
We also define the $G_{\bb{Q}_p}$-stable rank-four submodule
\[
\mathbb V_{\hf\hg}^f:=\bigl(\Fil^1 \bb{V}_{\hf} \hat\otimes_{\mathcal{O}}\As(\mathbb{V}_{\hg})\bigr)(\Xi_{\hf\hg}).
\]

Specializing the Hida family $\hg$ to $g$, we obtain a rank-eight $\bb{I}_{\hf}$-module
\[
\mathbb V_{\hf g}^{\dag} \coloneqq \mathbb V_{\hf} \hat \otimes_{\mathcal{O}} \As(\mathbb V_{g})(\Xi_{\hf g})
\]
and $G_{\bb{Q}_p}$-stable rank-four $\bb{I}_{\hf}$-submodules $\mathscr{F}^2\mathbb V_{\hf g}$ and $\mathbb V_{\hf g}^f$.

Fix a finite set $\Sigma$ of places of $\mathbb Q$ containing $\infty$ and the primes dividing $N_fN_{F/\mathbb{Q}}(\mathfrak{N}_g)p$, and let $\mathbb Q^\Sigma$ be the maximal extension of $\mathbb Q$ unramified outside $\Sigma$.

\begin{defn}\label{def:Sel}
For $\mathcal{L}\in\{\bal,\mathcal{F}\}$ define the Selmer group $\Sel_{\mathcal{L}}(\mathbb V_{\hf \hg}^{\dag})$ by
\[
\Sel_{\mathcal{L}}(\mathbb V_{\hf \hg}^{\dag}) = \ker \bigg(
H^1(\mathbb Q^\Sigma/\mathbb Q, \mathbb V_{\hf \hg}^{\dag}) \longrightarrow
\prod_{v \in \Sigma\smallsetminus\{p,\infty\}} H^1(\mathbb Q_v^{\nr},\mathbb V_{\hf \hg}^{\dag})\times
\frac{H^1(\mathbb Q_p,\mathbb V_{\hf \hg}^\dag)}{H^1_{\mathcal{L}}(\mathbb Q_p,\mathbb V_{\hf \hg}^\dag)}\bigg),
\]
where
$\mathbb Q_v^{\nr}$ denotes the maximal unramified extension of $\mathbb Q_v$ and
\[
H_{\mathcal L}^1(\mathbb Q_p, \mathbb V_{\hf \hg}^{\dag}) =
\begin{cases}
\ker \bigl(H^1(\mathbb Q_p, \mathbb V_{\hf \hg}^{\dag}) \longrightarrow H^1(\mathbb Q_p, \mathbb V_{\hf \hg}^{\dag}/\mathscr{F}^2\mathbb V_{\hf \hg}^{\dag}) \bigr) &\textrm{if $\mathcal{L}={\bal}$},\\[0.5em]
\ker \bigl(H^1(\mathbb Q_p, \mathbb V_{\hf \hg}^{\dag}) \longrightarrow H^1(\mathbb Q_p^{}, \mathbb V_{\hf \hg}^{\dag}/ \mathbb V_{\hf \hg}^{f}) \bigr) &\textrm{if $\mathcal{L}=\mathcal{F}$}.
\end{cases}
\]
We call $\Sel_{\bal}(\mathbb V_{\hf \hg}^{\dag})$ (resp. $\Sel_{\mathcal F}(\mathbb V_{\hf \hg}^{\dag})$)  the \emph{balanced} (resp. \emph{$\hf$-unbalanced}) Selmer group. 

Considering instead the $g$-specialized modules, we similarly define the Selmer groups corresponding to the previous local conditions $\Sel_{\bal}(\mathbb V_{\hf g}^{\dag})$  and $\Sel_{\mathcal F}(\mathbb V_{\hf g}^{\dag})$.
\end{defn}

\section{Geometry of modular curves and Hilbert modular varieties}

In this short section we review the definitions of certain maps and correspondences in the elliptic and Hilbert settings for our later use in the paper. 

\subsection{Degeneracy maps}

Let $K$ be a compact open subgroup of $\mathrm{GL}_2(\widehat{\cl{O}}_F)$ containing $V^1(\frk{N})$ for some ideal $\frk{N}$ of $\cl{O}_F$. Let $\frk{q}$ be a prime ideal of $\cl{O}_{F}$, and let $\varpi_\frk{q}$ be a uniformizer in $\cl{O}_{F,\frk{q}}$. Suppose that $(\frk{q},\frk{N})=1$. Given a non-negative integer $\alpha$ and a positive integer $r$, we put 
\[
K_0^1(\frk{q}^{\alpha},\frk{q}^{\alpha+r})= K\cap V^1(\frk{q}^\alpha)\cap V_0(\frk{q}^{\alpha+r}).
\]
Consider the modular variety $S_{10}(\frk{q}^\alpha,\frk{q}^{\alpha+r})=\mathrm{Sh}(G,K^1_0(\frk{q}^\alpha,\frk{q}^{\alpha+r}))$. Let $\eta_\frk{q}\in \GL_2(\bb{A}_{F,f})$ be the element defined by
\[
\eta_{\frk{q}\frk{q}}=\begin{pmatrix}
    1 & 0 \\ 0 & \varpi_\frk{q}
\end{pmatrix},\quad
\eta_{\frk{q}v}=\begin{pmatrix}
    1 & 0 \\ 0 & 1
\end{pmatrix} \quad\text{for $v\neq \frk{q}$.}
\]
We denote by $\mu_\frk{q}$ the degeneracy map $S_{10}(\frk{q}^{\alpha+1},\frk{q}^{\alpha+r})\rightarrow S_{10}(\frk{q}^\alpha,\frk{q}^{\alpha+r})$ given by $[x,g] \mapsto [x,g]$. We also have the two degeneracy maps 
\[
\pi_{1,\frk{q}},\pi_{2,\frk{q}}:S_{10}(\frk{q}^\alpha,\frk{q}^{\alpha+r})\longrightarrow S_{10}(\frk{q}^\alpha,\frk{q}^{\alpha+r-1})
\]
defined by $[x,g]\mapsto [x,g]$ and $[x,g]\mapsto [x,g\eta_\frk{q}]$, respectively.

\subsection{Hecke correspondences}

Let $K_1$ and $K_2$ be compact open subgroups of $\mathrm{GL}_2(\bb{A}_{F,f})$ and let $g\in \GL_2(\bb{A}_{F,f})$. Then we have a correspondence
\[
\begin{tikzcd}[column sep=large]
 & \mathrm{Sh}(G,gK_1g^{-1}\cap K_2) \arrow[ld] \arrow[rd] & \\ \mathrm{Sh}(G,K_1) & & \mathrm{Sh}(G,K_2)
\end{tikzcd}
\]
which induces contravariant maps
\[
[K_1gK_2]:H^\ast(\mathrm{Sh}(G,K_2),\bb{Z}_p)\longrightarrow H^\ast(\mathrm{Sh}(G,K_1),\bb{Z}_p)
\]
and covariant maps
\[
[K_1gK_2]':H^\ast(\mathrm{Sh}(G,K_1),\bb{Z}_p)\longrightarrow H^\ast(\mathrm{Sh}(G,K_2),\bb{Z}_p).
\]

Let $K$, $\frk{q}$ and $\eta_{\frk{q}}$ be as in the previous subsection. Then we define the Hecke operators
\begin{gather*}
    T_\frk{q}=[K^1(\frk{q}^\alpha)\eta_{\frk{q}}K^1(\frk{q}^\alpha)]=\pi_{2,\frk{q},\ast}\circ \pi_{1,\frk{q}}^\ast, \\
    T_\frk{q}'=[K^1(\frk{q}^\alpha)\eta_{\frk{q}}K^1(\frk{q}^\alpha)]'=\pi_{1,\frk{q},\ast}\circ\pi_{2,\frk{q}}^\ast.
\end{gather*}
When $\alpha>0$, we may also denote these operators by $U_{\frk{q}}$ and $U_{\frk{q}}'$, respectively.

Let $z\in \bb{A}_{F,f}^\times$. Then, we can define a homomorphism
\begin{align*}
    \frk{T}_{z^{-1}} : \mathrm{Sh}(G,K) &\longrightarrow \mathrm{Sh}(G,K) \\
    [x,g] &\longmapsto \left[x, g\begin{pmatrix} z & 0 \\ 0 & z \end{pmatrix}\right].
\end{align*}
The corresponding contravariant (resp. covariant) operator on $H^\ast(\mathrm{Sh}(G,K),\bb{Z}_p)$ will be denoted by $\langle z\rangle$ (resp. $\langle z\rangle'$). Note that $\langle z\rangle'=\langle z^{-1}\rangle$.

\subsection{Atkin--Lehner maps}\label{subsec:AL}

Let $K=V^1(N)$ for some integer $N$. Let $\iota: \GL_2(\bb{A}_{F,f})\rightarrow \GL_2(\bb{A}_{F,f})$ denote the involution defined by $g\mapsto \det(g)^{-1}g$. Let $\tau_N=\begin{pmatrix} 0 & -1 \\ N & 0 \end{pmatrix}$ and let $\frk{T}_{\tau_N}: \mathrm{Sh}(G,K)\rightarrow \mathrm{Sh}(G,\tau_N K \tau_N^{-1})$ be the homomorphism defined by $[x,g]\mapsto [x,g\tau_N^{-1}]$. Also note that $\iota$ induces a map, that we will denote in the same way, $\iota:\mathrm{Sh}(G,\tau_N K\tau_N^{-1})\rightarrow \mathrm{Sh}(G,K)$ defined by $[x,g]\mapsto [x,\iota(g)]$. We define the homomorphism 
\[
\lambda_N:\mathrm{Sh}(G,K)\longrightarrow \mathrm{Sh}(G,K)
\]
as the composition $\lambda_N=\iota\circ \frk{T}_{\tau_N}$. 

Let $\sigma\in G_{\mathbb{Q}}$ and take $s\in\bb{A}_{\bb{Q},f}^\times$ such that $\mrm{art}_{\bb{Q}}(s)=\sigma_{\vert\bb{Q}^{\ab}}$, where $\mrm{art}_{\bb{Q}}$ denotes the geometrically normalized Artin map. It follows from \cite[Lemma~7.10]{FJ} that
\[
\frk{T}_{s^{-1}}\circ \lambda_N\circ \sigma=\sigma\circ \lambda_N.
\]
In particular, $\lambda_N$ is defined over $\bb{Q}(\zeta_N)$. Note that the above relation implies
\[
\sigma \circ\lambda_{N\ast}\circ \sigma^{-1}=\langle s^{-1}\rangle \circ \lambda_{N\ast}.
\]

\section{Big Hirzebruch--Zagier classes}
\label{sec:cycles}

\subsection{A compatible collection of cycles}\label{subsec:cycles}

In this section we construct big Hirzebruch--Zagier cycles following the approach of \cite{DR3}. The construction works both when $p$ splits in $F$ and when $p$ is inert in $F$.


Let $F$ be a real quadratic number field, let $\cl{O}_F$ be its ring of integers and let $\sigma\in\Gal(F/\bb{Q})$ be the nontrivial automorphism of $F$. We denote by $\cl{O}_{F,p}$ the $\bb{Z}_p$-algebra $\cl{O}_F\otimes\bb{Z}_p$.

Given a $\bb{Z}_p$-module (respectively $\cl{O}_{F,p}$-module) $\Omega$, we denote by $\Omega'$ the set of primitive elements in $\Omega$, i.e., the set of elements which are not divisible by any non-unit element of $\bb{Z}_p$ (respectively $\cl{O}_{F,p}$).

For each integer $\alpha\geq 1$, define the set
\[
\Sigma_\alpha=\left(\bb{Z}/p^\alpha\bb{Z}\times \bb{Z}/p^\alpha\bb{Z}\right)'\times\left(\cl{O}_F/p^\alpha \times \cl{O}_F/p^\alpha\right)'
\]
equipped with the diagonal action of $\mathrm{GL}_2(\bb{Z}/p^\alpha\bb{Z})$ by multiplication on the right. The quotient $\Sigma_\alpha/\mathrm{SL}_2(\bb{Z}/p^\alpha\bb{Z})$ is equipped with a determinant map
\[
D : \Sigma_\alpha/\mathrm{SL}_2(\bb{Z}/p^\alpha\bb{Z}) \longrightarrow \cl{O}_F/p^\alpha\cl{O}_F\times \left(\cl{O}_F/p^\alpha\cl{O}_F\right)^{\mathrm{tr=0}}
\]
defined by
\[
D\left((x_0,y_0),(x_1,y_1)\right)=\left(\begin{vmatrix} x_0 & y_0 \\ x_1 & y_1 \end{vmatrix}, \begin{vmatrix} x_1 & y_1 \\ x_1^{\sigma} & y_1^{\sigma} \end{vmatrix}\right)
\]
For each pair $(t_0,t_1)\in \cl{O}_F/p^\alpha\cl{O}_F\times (\cl{O}_F/p^\alpha\cl{O}_F)^{\mathrm{tr=0}}$, we define
\[
\Sigma_{\alpha}[t_0,t_1]=\left\lbrace (v_0,v_1)\in \Sigma_\alpha : D(v_0,v_1)=(t_0,t_1)\right\rbrace.
\]

\begin{lemma}
The group $\mathrm{SL}_2(\bb{Z}/p^\alpha\bb{Z})$ acts simply transitively on $\Sigma_\alpha[t_0,t_1]$ if
\[
[t_0,t_1]\in I_\alpha \coloneqq \left(\cl{O}_F/p^\alpha\cl{O}_F\right)^\times\times \left(\cl{O}_F/p^\alpha\cl{O}_F\right)^{\times,\mathrm{tr=0}}.
\]
\end{lemma}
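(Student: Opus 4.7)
The plan is to produce, for each pair of elements $(v_0, v_1), (v_0', v_1') \in \Sigma_\alpha[t_0, t_1]$, a unique $g \in \mathrm{SL}_2(\bb{Z}/p^\alpha\bb{Z})$ satisfying $(v_0, v_1)\,g = (v_0', v_1')$. First form the $2\times 2$ matrices
\[
M = \begin{pmatrix} x_1 & y_1 \\ x_1^\sigma & y_1^\sigma \end{pmatrix}, \qquad M' = \begin{pmatrix} x_1' & y_1' \\ x_1^{\prime\sigma} & y_1^{\prime\sigma} \end{pmatrix}
\]
over $\cl{O}_F/p^\alpha$. By hypothesis $\det M = t_1$ is a unit, so $g := M^{-1}M' \in \mathrm{GL}_2(\cl{O}_F/p^\alpha)$ is the unique matrix with $Mg = M'$; in particular $v_1 g = v_1'$ and $\det g = t_1^{-1}t_1 = 1$. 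Galois invariance of $g$ is immediate: writing $S = \begin{pmatrix} 0 & 1 \\ 1 & 0 \end{pmatrix}$, one has $M^\sigma = SM$ and $(M')^\sigma = SM'$, so $g^\sigma = (SM)^{-1}(SM') = g$, which forces $g \in M_2(\bb{Z}/p^\alpha)$ and hence $g \in \mathrm{SL}_2(\bb{Z}/p^\alpha)$.

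The substance of the transitivity argument is to verify that this same $g$ automatically sends $v_0$ to $v_0'$. Setting $(u, v) := v_0 g$ and writing $g = \begin{pmatrix} a & b \\ c & d \end{pmatrix}$, a direct expansion using $\det g = 1$ gives
\[
u y_1' - v x_1' = (ad - bc)(x_0 y_1 - y_0 x_1) = t_0,
\]
and the same computation with $\sigma$-conjugates (using $g^\sigma = g$) yields $u y_1^{\prime\sigma} - v x_1^{\prime\sigma} = t_0^\sigma$. Both identities are satisfied verbatim by $(x_0', y_0')$ in place of $(u, v)$ because $D(v_0', v_1') = (t_0, t_1)$; subtracting, the row $(u - x_0',\, v - y_0')$ is annihilated by a $2\times 2$ matrix whose determinant equals $\det M' = t_1 \in (\cl{O}_F/p^\alpha)^{\times}$, and so must vanish.

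Freeness is an easy special case of the same input: if $(v_0, v_1)\,g = (v_0, v_1)$ for some $g \in \mathrm{SL}_2(\bb{Z}/p^\alpha)$, then $v_1 g = v_1$ together with its $\sigma$-conjugate gives $Mg = M$, whence $g = I$ from the invertibility of $M$. The only delicate step is the computation in the previous paragraph, and the single crucial ingredient throughout is the unit hypothesis on $t_1$, which simultaneously makes $M$ and $M'$ invertible, pins down $g$ uniquely, and finishes the comparison between $v_0 g$ and $v_0'$; the unit hypothesis on $t_0$ does not appear to enter the transitivity or freeness arguments directly, and will presumably be needed only to guarantee that $\Sigma_\alpha[t_0, t_1]$ is a nonempty set of primitive pairs.
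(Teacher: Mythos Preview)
Your proof is correct and follows essentially the same approach as the paper: both construct $g$ as $M^{-1}M'$ using the invertibility of $t_1$, deduce $g^\sigma=g$ from the swap relation $M^\sigma=SM$, and then verify $v_0 g = v_0'$. The only difference is cosmetic: where you verify the last step by a direct determinant expansion showing that $v_0 g$ and $v_0'$ satisfy the same pair of linear conditions with invertible coefficient matrix, the paper instead writes $v_0 = a v_1 + a^\sigma v_1^\sigma$ in the basis $\{v_1, v_1^\sigma\}$ and solves for $a$ via $t_0$. Your closing remark that the unit hypothesis on $t_0$ plays no role in transitivity or freeness is also consistent with the paper's argument, which likewise uses only that $t_1$ is a unit.
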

\begin{proof}
Let $(v_0,v_1),(w_0,w_1)\in\Sigma_\alpha$ satisfy $D(v_0,v_1)=D(w_0,w_1)$ in $I_\alpha$. Since $\det(v_1, v_1^\sigma)=\det(w_1, w_1^\sigma)$ is invertible, there is $\gamma\in\mathrm{SL}_2(\mathcal{O}_F/p^\alpha)$ such that
\[
\begin{pmatrix}
v_1\\
v_1^\sigma
\end{pmatrix}\gamma=\begin{pmatrix}
w_1\\
w_1^\sigma
\end{pmatrix}.
\]
We deduce that $\gamma^\sigma=\gamma$, i.e. $\gamma\in\mathrm{SL}_2(\bb{Z}/p^\alpha\bb{Z})$. We are left to show that $v_0\cdot\gamma=w_0$. There are $a,b\in \mathcal{O}_F/p^\alpha$ such that
\[
v_0=a\cdot v_1+a^\sigma\cdot v_1^\sigma,\qquad w_0=b\cdot w_1+b^\sigma\cdot w_1^\sigma,
\]
and we can compute that
\[
a^\sigma\cdot\det(v_1^\sigma, v_1)=\det(v_0, v_1)=\det(w_0,w_1)=b^\sigma\cdot\det(w_1^\sigma, w_1),
\]
thus $a^\sigma=b^\sigma$. The claim follows.
\end{proof}

Let $K$ be a compact open subgroup of $\mathrm{GL}_2(\widehat{\cl{O}}_F)$. We assume that $V^1(\frk{N})\subseteq K$ for some ideal $\frk{N}$ of $\cl{O}_F$ with $(p,\frk{N})=1$. Let $U=K\cap \mathrm{GL}_2(\mathbb{A}_{\mathbb{Q},f})$. For each positive integer $\alpha$, let 
\begin{align*}
K^1(p^\alpha)=K\cap V^1(p^\alpha),&\quad K^1_0(p^{\alpha+1})=K\cap V_0(p^{\alpha+1})\cap V^1(p^\alpha),\\ 
U^1(p^\alpha)=U\cap V^1(p^\alpha),&\quad U^1_0(p^\alpha)=U\cap V_0(p^{\alpha+1})\cap V^1(p^\alpha)
\end{align*}
and $U(p^\alpha)=U\cap V(p^\alpha)$. 


For each positive integer $\alpha$, consider the Hilbert modular surface $S_1(p^\alpha)= \mathrm{Sh}(G,K^1(p^\alpha))$, which is defined over $\bb{Q}$. The complex points of $S_1(p^\alpha)$ are given by 
\[
S_1(p^\alpha)(\bb{C})= \mathrm{GL}_2(F)^+\backslash \cl{H}^2\times \mathrm{GL}_2(\bb{A}_{F,f})/K^1(p^\alpha).
\]
We will also need to consider the Hilbert modular surface $S_{10}(p^{\alpha+1})=\mathrm{Sh}(G,K_{0}^1(p^{\alpha+1}))$.

For each positive integer $\alpha$ we consider the modular curves 
\[
Y_1(p^\alpha)= \mathrm{Sh}(\GL_2,U^1(p^\alpha)),\quad 
Y(p^\alpha)= \mathrm{Sh}(\GL_2,U(p^\alpha)),
\]
which are defined over $\bb{Q}$. We fix a connected component $\bb{Y}(p^\alpha)$ of $Y(p^\alpha)_{\bb{Q}(\zeta_{p^\alpha})}$
in such a way that points in $\bb{Y}(p^{\alpha+1})$ map to points in $\bb{Y}(p^\alpha)$ under the natural map from $Y(p^{\alpha+1})$ to $Y(p^\alpha)$. For instance, using the complex uniformization
\[
Y(p^\alpha)(\bb{C})= \mathrm{GL}_2(\bb{Q})^+\backslash \cl{H}\times \mathrm{GL}_2(\bb{A}_{\bb{Q},f})/U(p^\alpha),
\]
we could choose $\bb{Y}(p^\alpha)$ to be the image of the embedding $\Gamma_U(p^\alpha)\backslash\cl{H}\hookrightarrow Y(p^\alpha)(\bb{C})$ taking $z$ to $[z,1]$, where $\Gamma_U(p^\alpha)=\mathrm{GL}_2(\bb{Q})^+\cap U(p^\alpha)$.
We will also need to consider the modular curve $Y_{10}(p^{\alpha+1})=\mathrm{Sh}(\GL_2,U_{0}^1(p^{\alpha+1}))$.

Let $(v_0,v_1)=((c_0,d_0),(c_1,d_1))$ be an element in $\Sigma_\alpha$ and choose matrices $\delta_0\in\mathrm{SL}_2(\bb{Z}_p)$ and $\delta_1\in \mathrm{SL}_2(\cl{O}_{F,p})$ such that the reduction modulo $p^\alpha$ of their bottom row is given by $v_0$ and $v_1$, respectively.

Let $a\in \mathrm{GL}_2(\widehat{\bb{Z}})$ be the element with local components $a_v=\mathbbm{1}_2$ for $v\neq p$ and $a_p=\delta_0^{-1}$. Let $b\in \mathrm{GL}_2(\widehat{\cl{O}}_F)$ be the element with local components $a_v=\mathbbm{1}_v$ for $v\nmid p$ and $b_p=\delta_1^{-1}$. Then we can define a morphism of varieties defined over $\bb{Q}$
\[
\varphi_{(v_0,v_1)} : Y(p^\alpha)\longrightarrow Y_1(p^\alpha)\times S_1(p^\alpha)
\]
by the rule
\[
[x,g] \mapsto \left([x,ga], [x,x,gb]\right),
\]
where we use the same notation for an element $g\in \mathrm{GL}_2(\bb{A}_{\bb{Q},f})$ and its natural inclusion in $\mathrm{GL}_2(\bb{A}_{F,f})$.
We denote by $\Delta_\alpha(v_0,v_1)=\varphi_{(v_0,v_1)}(\bb{Y}(p^\alpha))$ the scheme-theoretic image of $\bb{Y}(p^\alpha)$ by $\varphi_{(v_0,v_1)}$. Since $\bb{Y}(p^\alpha)$ is only defined over $\bb{Q}(\zeta_{p^\alpha})$, so is $\Delta_\alpha(v_0,v_1)$.

It is clear that the cycle $\Delta_\alpha(v_0,v_1)\subset Y_1(p^\alpha)\times S_1(p^\alpha)$ depends only on the class of $(v_0,v_1)$ in $\Sigma_\alpha/\mathrm{SL}_2(\bb{Z}/p^\alpha\bb{Z})$. In particular, if $(t_0,t_1)\in I_\alpha$, we can define
\[
\Delta_\alpha[t_0,t_1]\in \ch^2(Y_1(p^\alpha)\times S_1(p^\alpha))(\bb{Q}(\zeta_\alpha))
\]
to be equal to the cycle $\Delta_\alpha(v_0,v_1)$ for any $(v_0,v_1)\in \Sigma_\alpha[t_0,t_1]$.

For $m\in (\bb{Z}/p^\alpha\bb{Z})^\times$, we denote by $\sigma_m$ the element in $\Gal(\bb{Q}(\zeta_{p^\alpha})/\bb{Q})$ defined by $\zeta\mapsto \zeta^m$.

\begin{lemma}\label{lemma:cycles-galoisaction}
Let $[t_0,t_1]\in I_\alpha$. Then, for all $\sigma_m\in\Gal(\bb{Q}(\zeta_{p^\alpha})/\bb{Q})$,
\[
\sigma_m \Delta_\alpha[t_0,t_1]=\Delta_\alpha[m \cdot t_0,\ m \cdot t_1].
\]
Moreover, for all $z_0\in \bb{Z}_p^\times$, $z_1\in \cl{O}_{F,p}^\times$
\[
\langle z_0^{-1},z_1^{-1}\rangle \Delta_\alpha[t_0,t_1]=\Delta_\alpha[(z_0z_1) \cdot t_0,\ \mathrm{N}_{F/\bb{Q}}(z_1) \cdot t_1].
\]
\end{lemma}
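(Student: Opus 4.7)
The plan is to combine Shimura's reciprocity law for the canonical model of $Y(p^\alpha)$ with a direct unwinding of the definition of $\varphi_{(v_0,v_1)}$. Both parts follow the same template: apply an automorphism to $Y(p^\alpha)$ or to $Y_1(p^\alpha)\times S_1(p^\alpha)$, push it through $\varphi_{(v_0,v_1)}$, and exhibit the result as $\Delta_\alpha(v_0',v_1')$ for an explicit new pair whose $D$-invariant then matches the claim by the multilinearity of the determinant map.

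For the Galois statement, the morphism $\varphi_{(v_0,v_1)}$ is defined over $\bb{Q}$, so it commutes with $\sigma_m$:
\[
\sigma_m\Delta_\alpha(v_0,v_1)=\varphi_{(v_0,v_1)}(\sigma_m\bb{Y}(p^\alpha)).
\]
By the reciprocity law identifying $\pi_0(Y(p^\alpha)_{\overline{\bb{Q}}})$ with $(\bb{Z}/p^\alpha)^\times$ via the determinant, $\sigma_m$ permutes connected components by multiplication by $m$; concretely, $\sigma_m\bb{Y}(p^\alpha)=\psi_m(\bb{Y}(p^\alpha))$ for the $\bb{Q}$-automorphism $\psi_m\colon[x,g]\mapsto[x,g\gamma_m]$ with $\gamma_m\in\GL_2(\bb{A}_{\bb{Q},f})$ of $p$-component $\begin{pmatrix}m&0\\0&1\end{pmatrix}$ and trivial component elsewhere. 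Unwinding, $\varphi_{(v_0,v_1)}\circ\psi_m$ sends $[x,g]$ to $([x,g\gamma_m a],[x,g\gamma_m b])$, and the task is to rewrite $\gamma_m\delta_0^{-1}$ and $\gamma_m\delta_1^{-1}$ modulo right multiplication by $U^1(p^\alpha)_p$ and $K^1(p^\alpha)_p$ in the form $(\delta_0')^{-1}$ and $(\delta_1')^{-1}$ with $\delta_i'\in\SL_2$. Absorbing a correction of determinant $m^{-1}$ on the right from the level group, a short matrix calculation shows that the bottom rows of $\delta_i'$ are obtained from $v_i$ by right multiplication by a matrix of determinant $m$. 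The multilinearity of $D$ then gives
\[
D(v_0',v_1') \;=\; (m\cdot t_0,\,m\cdot t_1),
\]
which is the first assertion.

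The second statement is proved by the same template applied to the scalar automorphism $\frk{T}_{z_0}\times\frk{T}_{z_1}$ of $Y_1(p^\alpha)\times S_1(p^\alpha)$, acting by right multiplication by the central matrices $\mathrm{diag}(z_0,z_0)$ and $\mathrm{diag}(z_1,z_1)$ respectively. Because these scalars commute with everything, the same normalization step identifies
\[
\langle z_0^{-1},z_1^{-1}\rangle\Delta_\alpha(v_0,v_1) \;=\; \Delta_\alpha(z_0v_0,\,z_1v_1),
\]
and then a direct determinant computation yields
\[
D(z_0v_0,z_1v_1)=\bigl(z_0z_1(v_0\wedge v_1),\,z_1z_1^\sigma(v_1\wedge v_1^\sigma)\bigr)=\bigl(z_0z_1\cdot t_0,\,N_{F/\bb{Q}}(z_1)\cdot t_1\bigr),
\]
as required.

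The main obstacle lies in the normalization step: since $\gamma_m$ and the central scalars $\mathrm{diag}(z_i,z_i)$ are not in $\SL_2$ in general, the matrices $\gamma_m\delta_i^{-1}$ and $z_i\delta_i^{-1}$ cannot be identified directly with $(\delta_i')^{-1}$ for some $\delta_i'\in\SL_2$; one must exploit the flexibility of right-multiplication by the level subgroups $U^1(p^\alpha)_p$ and $K^1(p^\alpha)_p$ (which contain suitably congruent diagonal correction factors of determinant $m^{-1}$ or $z_i^{-2}$) to realize the right representative, and then verify that the induced transformation on the bottom rows has precisely the determinantal effect claimed.
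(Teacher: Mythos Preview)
Your proposal is correct and follows essentially the same approach as the paper: use the $\bb{Q}$-rationality of $\varphi_{(v_0,v_1)}$ together with the reciprocity law describing the Galois action on $\pi_0(Y(p^\alpha))$ via the determinant, then identify the resulting right translation (by $\gamma_m$ or by the central scalars) with a new $\varphi_{(v_0',v_1')}$ after absorbing a correction into the level subgroup, and finally compute $D(v_0',v_1')$. The paper carries out the explicit matrix computations you allude to (with the representative $h_p=\mathrm{diag}(1,m)$ rather than your $\mathrm{diag}(m,1)$, which is immaterial), finding $v_j'=(mc_j,d_j)$ in the Galois case and $v_j'=z_jv_j$ in the diamond case, confirming your determinant calculations.
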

\begin{proof}

To prove the first statement, we first note that the map
\begin{align*}
Y(p^\alpha)=\mathrm{GL}_2(\bb{Q})^+\backslash \cl{H}\times \mathrm{GL}_2(\bb{A}_{\bb{Q},f})/U(p^\alpha) &\longrightarrow \bb{Q}^{\times,+}\backslash \bb{A}_{\bb{Q},f}^\times/\det(U(p^\alpha)) \\
[x,g] &\longmapsto \det(g)
\end{align*}
yields an isomorphism
\[
\pi_0(Y(p^\alpha))\simeq \bb{Q}^{\times,+}\backslash \bb{A}_{\bb{Q},f}^\times/\det(U(p^\alpha)),
\]
Moreover, for all $s\in \bb{A}_{\bb{Q},f}^\times$, the action of $\sigma=\mathrm{art}_{\bb{Q}}(s)\in \Gal(\bb{Q}^\mathrm{ab}/\bb{Q})$ on $\pi_0(Y(p^\alpha))$ is described, via the above isomorphism, by $t\mapsto st$.

Let $m\in\bb{Z}_p^\times$. Let $h\in\mathrm{GL}_2(\bb{A}_{\bb{Q},f})$ be the element with local components $h_p=\mathrm{diag}(1,m)$ and $h_v=\mathbbm{1}_2$ for $v\neq p$. Then $\mathrm{art}_\bb{Q}(\det(h))=\sigma_m$ in $\Gal(\bb{Q}(\zeta_{p^\alpha})/\bb{Q})$. Therefore, the $\bb{Q}$-rational morphism
\begin{align*}
    \varphi_m\colon Y(p^\alpha) &\longrightarrow Y(p^\alpha) \\
    [x,g] &\longmapsto [x,gh]
\end{align*}
descends to $\sigma_m$ on $\pi_0(Y(p^\alpha))$. Let $(v_0,v_1)\in\Sigma_\alpha[t_0,t_1]$.
Since $\varphi_{(v_0,v_1)}\colon Y(p^\alpha)\to Y_1(p^\alpha)\times S_1(p^\alpha)$ is also a $\bb{Q}$-rational morphism,
\[
\sigma_m \Delta_\alpha[t_0,t_1] = \sigma_m\left(\varphi_{(v_0,v_1)}(\bb{Y}(p^\alpha))\right)=\varphi_{(v_0,v_1)}\left(\sigma_m(\bb{Y}(p^\alpha))\right)=\varphi_{(v_0,v_1)}\left(\varphi_m(\bb{Y}(p^\alpha))\right).
\]

For $j=0,1$, let $v_j=(c_j,d_j)$ and
\[
\delta_j=\begin{pmatrix} a_j & b_j \\ c_j & d_j \end{pmatrix}.
\]
Then, we have
\begin{gather*}
h_p \delta_0^{-1}U^{1}(p^\alpha)_p=\begin{pmatrix}
d_0 & -m^{-1}b_0 \\ -mc_0 & a_0
\end{pmatrix}U^{1}(p^\alpha)_p, \\
h_p \delta_1^{-1}K^{1}(p^\alpha)_p=\begin{pmatrix}
d_1 & -m^{-1}b_1 \\ -mc_1 & a_1
\end{pmatrix}K^{1}(p^\alpha_p).
\end{gather*}
This shows that $\varphi_{(v_0,v_1)}\circ\varphi_m=\varphi_{(v_0',v_1')}$, where $v_j'=(mc_j,d_j)$ for $j=0,1$. Since $D(v_0',v_1')=(m\cdot t_0,m\cdot t_1)$, we deduce that
\[
\sigma_m\Delta_\alpha[t_0,t_1]=\varphi_{(v_0',v_1')}(\bb{Y}(p^\alpha))=\Delta_\alpha[m\cdot t_0,m\cdot t_1].
\]

The result for diamond operators follows after noting that
\begin{gather*}
\delta_0^{-1}\begin{pmatrix}
z_0 & 0\\0 & z_0
\end{pmatrix}U^{1}(p^\alpha)_p=\begin{pmatrix}
z_0 d_0&-z_0^{-1} b_0 \\ -z_0 c_0 & z_0^{-1} a_0
\end{pmatrix}U^{1}(p^\alpha)_p, \\
\delta_1^{-1}\begin{pmatrix}
z_1 & 0 \\ 0 & z_1
\end{pmatrix}K^{1}(p^\alpha)_p=\begin{pmatrix}
z_1 d_1 & -z_1^{-1} b_1 \\ -z_1 c_1 & z_1^{-1} a_1
\end{pmatrix}K^{1}(p^\alpha)_p.
\end{gather*}
\end{proof}

\begin{lemma}\label{lemma:cycle-relations}
Let $\alpha\geq 1$ and let $[t_0',t_1']\in I_{\alpha+1}$ map to $[t_0,t_1]\in I_\alpha$. Then
\begin{align*}
(\varpi_1,\varpi_1)_{\ast}\Delta_{\alpha+1}[t_0',t_1'] &=p^3\Delta_{\alpha}[t_0,t_1], \\
(\varpi_2,\varpi_2)_\ast \Delta_{\alpha+1}[t_0',t_1'] &=(U_p,U_p)\Delta_{\alpha}[t_0,t_1].
\end{align*}
Moreover, the cycles $\Delta_\alpha[t_0,t_1]$ satisfy the distribution relation
\[
\sum_{[t_0',t_1']}\Delta_{\alpha+1}[t_0',t_1']=(\varpi_1,\varpi_1)^\ast\Delta_\alpha[t_0,t_1],
\]
where the sum is taken over pairs $[t_0',t_1']\in I_{\alpha+1}$ which map to $[t_0,t_1]\in I_\alpha$.
\label{lemma:cycles-relations}
\end{lemma}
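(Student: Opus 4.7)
The three identities will all follow from commutative diagrams relating $\varphi_{(v_0', v_1')}$ at level $p^{\alpha+1}$ to $\varphi_{(v_0, v_1)}$ at level $p^\alpha$ through the degeneracy maps, combined with degree computations on the chosen connected components. The key observation is that one may choose the matrices $\delta_0', \delta_1'$ used in defining $\varphi_{(v_0', v_1')}$ as lifts of $\delta_0, \delta_1$, since $(v_0', v_1')$ lifts $(v_0, v_1)$ modulo $p^\alpha$.

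With this choice, the square
\[
\begin{tikzcd}
Y(p^{\alpha+1}) \arrow[r, "\varphi_{(v_0',v_1')}"] \arrow[d, "\pi"] & Y_1(p^{\alpha+1}) \times S_1(p^{\alpha+1}) \arrow[d, "{(\varpi_1,\varpi_1)}"] \\
Y(p^\alpha) \arrow[r, "\varphi_{(v_0,v_1)}"] & Y_1(p^\alpha) \times S_1(p^\alpha)
\end{tikzcd}
\]
commutes, where $\pi$ is the natural projection. Since $\pi$ restricts to a finite étale cover $\bb{Y}(p^{\alpha+1}) \to \bb{Y}(p^\alpha)$ of degree $p^3$ — the order of $\ker(\SL_2(\bb{Z}/p^{\alpha+1}) \twoheadrightarrow \SL_2(\bb{Z}/p^\alpha))$ — Part~1 follows from the projection formula:
\[
(\varpi_1,\varpi_1)_\ast \Delta_{\alpha+1}[t_0',t_1'] \;=\; \varphi_{(v_0,v_1),\ast}\bigl(\pi_\ast[\bb{Y}(p^{\alpha+1})]\bigr) \;=\; p^3\,\Delta_\alpha[t_0,t_1].
\]

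For Part~2, I would factor $\varpi_2 = \pi_{2,p} \circ \iota$ through $Y_{10}(p^{\alpha+1}) \times S_{10}(p^{\alpha+1})$, where $\iota$ is the natural inclusion-induced map (using $V^1(p^{\alpha+1}) \subseteq V_0(p^{\alpha+1}) \cap V^1(p^\alpha)$) and $\pi_{2,p}$ is the twisted degeneracy $[x,g]\mapsto[x,g\eta_p]$ down to level $p^\alpha$, applied on each factor. Combined with the presentation $U_p = \pi_{2,p,\ast} \circ \pi_{1,p}^\ast$, the identity reduces to matching $\iota_\ast \Delta_{\alpha+1}[t_0',t_1']$ with an appropriate piece of $\pi_{1,p}^\ast \Delta_\alpha[t_0,t_1]$, which is verified by computing the matrix products $(\delta_0')^{-1}\eta_p$ and $(\delta_1')^{-1}\eta_p$ at $p$ against coset representatives for the double coset defining $U_p$.

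Part~3 follows from flat base change: since $(\varpi_1,\varpi_1)$ is finite étale on the relevant loci,
\[
(\varpi_1,\varpi_1)^\ast \Delta_\alpha[t_0,t_1] \;=\; (\varpi_1,\varpi_1)^\ast \varphi_{(v_0,v_1),\ast}[\bb{Y}(p^\alpha)]
\]
equals the pushforward from the fiber product, which one identifies directly with the disjoint union of $\Delta_{\alpha+1}[t_0',t_1']$ over lifts $(t_0',t_1')\in I_{\alpha+1}$ of $(t_0,t_1)$, each appearing with multiplicity one. The main obstacle is Part~2: while Parts~1 and 3 are essentially formal consequences of the diagrammatic setup and degree computations, Part~2 requires explicit coset bookkeeping at $p$ to bridge the twisted degeneracy $\varpi_2$ with the double-coset definition of $U_p$.
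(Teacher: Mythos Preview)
Your Part~1 is essentially the paper's argument. For Part~2 you have the right factorization $\varpi_2=\pi_2\circ\iota$ through the intermediate level $Y_{10}(p^{\alpha+1})\times S_{10}(p^{\alpha+1})$, but you overcomplicate the key step. You say the identity ``reduces to matching $\iota_\ast \Delta_{\alpha+1}[t_0',t_1']$ with an appropriate piece of $\pi_{1}^\ast \Delta_\alpha[t_0,t_1]$'' via explicit coset computations with $\eta_p$. In fact it is not a \emph{piece}: one has the exact equality $\iota_\ast\Delta_{\alpha+1}[t_0',t_1']=(\pi_1,\pi_1)^\ast\Delta_\alpha[t_0,t_1]$, and no coset bookkeeping is needed. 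The paper observes that the composite $\phi=\iota\circ\varphi_{(v_0',v_1')}:\bb{Y}(p^{\alpha+1})\to Y_{10}(p^{\alpha+1})\times S_{10}(p^{\alpha+1})$ is again a closed embedding, fitting into a square
\[
\begin{tikzcd}
\bb{Y}(p^{\alpha+1}) \arrow[r, hook, "\phi"] \arrow[d] & Y_{10}(p^{\alpha+1})\times S_{10}(p^{\alpha+1}) \arrow[d, "{(\pi_1,\pi_1)}"] \\
\bb{Y}(p^\alpha) \arrow[r, hook, "\varphi_{(v_0,v_1)}"] & Y_1(p^\alpha)\times S_1(p^\alpha)
\end{tikzcd}
\]
where both vertical maps are finite of the \emph{same} degree $p^3$. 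Since the horizontal maps are closed embeddings, this forces $(\pi_1,\pi_1)^\ast\Delta_\alpha[t_0,t_1]=\phi_\ast[\bb{Y}(p^{\alpha+1})]=\iota_\ast\Delta_{\alpha+1}[t_0',t_1']$, and then $(\varpi_2,\varpi_2)_\ast=(\pi_2,\pi_2)_\ast(\pi_1,\pi_1)^\ast=(U_p,U_p)$ on the nose. Your reference to ``matrix products $(\delta_0')^{-1}\eta_p$'' is misplaced here: $\eta_p$ enters only in $\pi_2$, which is applied \emph{after} the identification, not in verifying it.

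For Part~3 your base-change approach could be made to work, but the paper's argument is shorter and reuses the intermediate result from Part~2: the map $\iota$ (their $(\mu,\mu)$) is a Galois cover whose Galois group permutes the lifts $[t_0',t_1']$ simply transitively, so
\[
\sum_{[t_0',t_1']}\Delta_{\alpha+1}[t_0',t_1']=\iota^\ast\iota_\ast\Delta_{\alpha+1}[t_0',t_1']=\iota^\ast(\pi_1,\pi_1)^\ast\Delta_\alpha[t_0,t_1]=(\varpi_1,\varpi_1)^\ast\Delta_\alpha[t_0,t_1].
\]
This avoids having to identify the fiber product explicitly.
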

\begin{proof}
Choose $(v_0',v_1')\in\Sigma_{\alpha+1}[t_0',t_1']$ and let $(v_0,v_1)$ be its image in $\Sigma_\alpha[t_0,t_1]$.

We have a commutative diagram
\[
\begin{tikzcd}[column sep=large]
\bb{Y}(p^{\alpha+1}) \arrow[d] \arrow[r, hook, "\varphi_{(v_0',v_1')}"] & Y_1(p^{\alpha+1})\times S_1(p^{\alpha+1}) \arrow[d] \\
\bb{Y}(p^\alpha) \arrow[r, hook, "\varphi_{(v_0,v_1)}"] & Y_1(p^\alpha)\times S_1(p^\alpha)
\end{tikzcd}
\]
in which the horizontal maps are closed embeddings. Since the left vertical map is finite of degree $p^3$, we deduce the first identity in the lemma.

To prove the second identity, we observe that in the diagram
\[
\begin{tikzcd}[column sep=large]
\bb{Y}(p^{\alpha+1}) \arrow[r, hook, "\varphi_{(v_0',v_1')}"] \arrow[rd, hook, "\phi_{(v_0',v_1')}"'] & Y_1(p^{\alpha+1})\times S_1(p^{\alpha+1}) \arrow[d, "{(\mu,\mu)}"] \\
& Y_{01}(p^{\alpha+1})\times S_{01}(p^{\alpha+1})
\end{tikzcd}
\]
the diagonal arrow is also a closed embedding. The map $\phi_{(v_0',v_1')}$ above is simply defined as the composition of the other two maps in the diagram, and we define $\Delta_{\alpha+1}^\flat[t_0,t_1]$ as the scheme-theoretic image of $\phi_{(v_0',v_1')}$. The notation here reflects the fact that $\Delta_{\alpha+1}^\flat[t_0,t_1]$ only depends on $[t_0,t_1]\in I_{\alpha}$, and it is clear from the definitions that $(\mu,\mu)_\ast\Delta_{\alpha+1}[t_0',t_1']=\Delta_{\alpha+1}^\flat[t_0,t_1]$. Now, in the diagram
\[
\begin{tikzcd}[column sep=large]
\bb{Y}(p^{\alpha+1}) \arrow[r, hook, "\phi_{(v_0',v_1')}"] \arrow[d] & Y_{01}(p^{\alpha+1})\times S_{01}(p^{\alpha+1}) \arrow[d, "{(\pi_1,\pi_1)}"] \\
\bb{Y}(p^\alpha) \arrow[r, hook, "\varphi_{(v_0,v_1)}"] & Y_{1}(p^{\alpha})\times S_{1}(p^{\alpha})
\end{tikzcd}
\]
the horizontal arrows are again closed embeddings, whereas the vertical maps are both finite of degree $p^3$. Hence, we obtain the identity
\[(\pi_1,\pi_1)^\ast\Delta_{\alpha}[t_0,t_1]=\Delta_{\alpha+1}^\flat[t_0,t_1].
\]
Altogether, we have
\[
(\varpi_2,\varpi_2)_\ast \Delta_{\alpha+1}[t_0',t_1']=(\pi_2,\pi_2)_\ast (\pi_1,\pi_1)^\ast \Delta_{\alpha}[t_0,t_1]=(U_p,U_p)\Delta_{\alpha}[t_0,t_1].
\]

Finally, to prove the last statement, observe that the map $(\mu,\mu)$ is a Galois cover and the sum in the statement is taken over the Galois-translates of a fixed $\Delta_{\alpha+1}[t_0',t_1']$. Therefore,
\[
\sum_{[t_0',t_1']}\Delta_{\alpha+1}[t_0',t_1']=(\mu,\mu)^\ast (\mu,\mu)_\ast \Delta_{\alpha+1}[t_0',t_1'] = (\varpi_1,\varpi_1)^\ast \Delta_{\alpha}[t_0,t_1].
\]
\end{proof}

\subsection{Galois cohomology classes}\label{subsec:classes}

Assume from now on that $V^1(N)\leq K$ for some integer $N$ such that $p\nmid N$. We shorten notation by writing 
\[
Z_\alpha=Y_1(p^\alpha)\times S_1(p^\alpha).
\]
Let $E$ be a finite extension of $\bb{Q}_p$ and let $\cl{O}$ denote its ring of integers.



We want to use the cycles $\Delta_{\alpha}[t_0,t_1]\subseteq Z_\alpha$ to define compatible families of Galois cohomology classes with coefficients in $H^3_{\et}(Z_{\alpha,\overline{\bb{Q}}},\cl{O}(2))$. To that end, we first modify the cycles $\Delta_{\alpha}[t_0,t_1]$ to make them null-homologous, i.e., to obtain cycles in the kernel of the \'etale cycle class map
\[
\ch^2(Z_\alpha)(\bb{Q}(\zeta_\alpha))\longrightarrow H_{\et}^4(Z_{\alpha,\overline{\bb{Q}}},\bb{Z}_p(2)).
\]
Let $\ell$ be a rational prime and define
\[
\Delta_\alpha^\circ[t_0,t_1]=(\ell+1-(T_\ell,\id))\Delta_\alpha[t_0,t_1]\in \ch^2(Z_\alpha)(\bb{Q}(\zeta_\alpha)).
\]
Since, as in the proof of \cite[Prop.~5.10]{FJ}, the correspondence $\ell+1-(T_\ell,\id)$ annihilates the cohomology group $H_{\et}^4(Z_{\alpha,\overline{\bb{Q}}},\bb{Z}_p(2))$, the cycles $\Delta_\alpha^\circ[t_0,t_1]$ are null-homologous.

Fix an element $[a,b]\in I_1$. We denote by $\vert\Delta_1^\circ[a,b]\vert$ the support of the cycle $\Delta_1^\circ[a,b]$ and we define $\Delta_{\alpha}^\circ[[a,b]]$ to be the variety fitting in the Cartesian diagram
\[
\begin{tikzcd}
\Delta_\alpha^\circ[[a,b]] \arrow[r, hook] \arrow[d, two heads] & Z_\alpha \arrow[d, two heads] \\
\vert\Delta_1^\circ[a,b]\vert \arrow[r, hook] & Z_1.
\end{tikzcd}
\]
We also define the variety $U_\alpha=Z_{\alpha}-\Delta_\alpha^\circ[[a,b]]$.

As before, let $\Lambda=\cl{O}[\![1+p\bb{Z}_p]\!]$ and $\Lambda^F=\cl{O}[\![1+p\cl{O}_{F,p}]\!]$. We define $\Lambda\otimes\Lambda^F$-module structure on $H^\ast_{\et}(Z_{\alpha,\overline{\bb{Q}}},\cl{O})$, $H^\ast_{\et}(\Delta_\alpha^\circ[[a,b]]_{\overline{\bb{Q}}},\cl{O})$ and $H^\ast_{\et}(U_{\alpha,\overline{\bb{Q}}},\cl{O})$ by letting a group-like element $[u_1]\otimes [u_2]$ act by $\langle u_1,u_2\rangle'=\langle u_1^{-1},u_2^{-1}\rangle$.

Let $[t_0,t_1]$ be an element in $I_\alpha$ mapping to $[a,b]$ in $I_1$. 
Then, we have the following commutative diagram of $\cl{O}[G_{\bb{Q}(\zeta_1)}]$-modules
\[
\begin{tikzcd}
H^3_{\et}(Z_{\alpha,\overline{\bb{Q}}},\cl{O})(2)(-\bm{\kappa}^{1/2}) \arrow[r, hook, dashed] \arrow[d, equal]  & \cl{E} \arrow[r, two heads, dashed] \arrow[d, hook, dashed] & \cl{O} \arrow[d, hook, "j"] \\
H^3_{\et}(Z_{\alpha,\overline{\bb{Q}}},\cl{O})(2)(-\bm{\kappa}^{1/2}) \arrow[r, hook] & H^3_{\et}(U_{\alpha,\overline{\bb{Q}}},\cl{O})(2)(-\bm{\kappa}^{1/2}) \arrow[r, two heads] & H^0_{\et}(\Delta_\alpha^\circ[[a,b]]_{\overline{\bb{Q}}},\cl{O})(-\bm{\kappa}^{1/2}),
\end{tikzcd}
\]
where
\begin{itemize}
    \item $\bm{\kappa}^{1/2}:G_{\bb{Q}(\zeta_1)}\rightarrow (\Lambda\otimes\Lambda^F)^\times$ is the character defined by $\sigma\mapsto [\epsilon_{\cyc}(\sigma)^{1/2}]\otimes[\epsilon_{\cyc}(\sigma)^{1/2}]$, with the square root taken in $1+p\bb{Z}_p$;
    \item $j:\cl{O}\lhook\joinrel\rightarrow H^0_{\et}(\Delta^\circ_\alpha[[a,b]]_{\overline{\bb{Q}}},\cl{O})(-\bm{\kappa}^{1/2})$ is the map sending $1\in\cl{O}$ to the cycle $\Delta_\alpha^\circ[t_0,t_1]$;
    \item the upper row is the pullback of the lower row via the map $j$.
\end{itemize}
To simplify notation, let $\lambda_\alpha$ denote the Atkin--Lehner map $\lambda_{Np^\alpha}$ introduced in Section~\ref{subsec:AL}, using the same notation both for $Y_1(p^\alpha)$ and for $S_1(p^\alpha)$. Then, we define the class
\[
\tilde{\kappa}_\alpha[t_0,t_1]\in H^1(\bb{Q}(\zeta_1),H^3_{\et}(Z_{\alpha,\overline{\bb{Q}}},\cl{O})(2)(-\bm{\kappa}^{1/2}))
\]
as the image of $1\in\cl{O}$ via the connecting map
\[
\cl{O}\longrightarrow H^1(\bb{Q}(\zeta_1),H^3_{\et}(Z_{\alpha,\overline{\bb{Q}}},\cl{O})(2)(-\bm{\kappa}^{1/2}))
\]
arising from the upper short exact sequence in the diagram, and we define the class
\[
\kappa_\alpha[t_0,t_1]\in H^1(\bb{Q}(\zeta_1),H^3_{\et}(Z_{\alpha,\overline{\bb{Q}}},\cl{O})(2)(-\bm{\kappa}^{1/2})(\langle\omega_N,\omega_N\rangle))
\]
as the image of $\tilde{\kappa}_\alpha[t_0,t_1]$ via the Atkin--Lehner map
\[
(\lambda_\alpha,\lambda_\alpha)_\ast : H^1(\bb{Q}(\zeta_1),H^3_{\et}(Z_{\alpha,\overline{\bb{Q}}},\cl{O})(2)(-\bm{\kappa}^{1/2}))\rightarrow H^1(\bb{Q}(\zeta_1),H^3_{\et}(Z_{\alpha,\overline{\bb{Q}}},\cl{O})(2)(-\bm{\kappa}^{1/2})(\langle\omega_N,\omega_N\rangle)),
\]
where $\omega_N:G_{\bb{Q}}\rightarrow \prod_{q\mid N} \bb{Z}_q^\times$ is defined by $\omega_N(\sigma)=\prod_{q\mid N} \epsilon_{q,\cyc}(\sigma)$.

\begin{lemma}
Let $\alpha\geq 1$ and let $[t_0',t_1']\in I_{\alpha+1}$ map to $[t_0,t_1]\in I_\alpha$. Then
\begin{align*}
(\varpi_2,\varpi_2)_{\ast}\kappa_{\alpha+1}[t_0',t_1'] &=p^3\kappa_{\alpha}[t_0,t_1], \\
(\varpi_1,\varpi_1)_\ast \kappa_{\alpha+1}[t_0',t_1'] &=(U_p',U_p')\kappa_{\alpha+1}[t_0,t_1].
\end{align*}
Moreover, the classes $\kappa_\alpha[t_0,t_1]$ satisfy the distribution relation
\[
\sum_{[t_0',t_1']}\kappa_{\alpha+1}[t_0',t_1']=(\varpi_2,\varpi_2)^\ast\kappa_\alpha[t_0,t_1],
\]
where the sum is taken over pairs $[t_0',t_1']\in I_{\alpha+1}$ which map to $[t_0,t_1]\in I_\alpha$.
\label{lemma:classes-relations}
\end{lemma}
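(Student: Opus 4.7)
The plan is to lift the cycle relations of Lemma~\ref{lemma:cycle-relations} through the Abel--Jacobi-type construction yielding the classes $\tilde{\kappa}_\alpha[t_0,t_1]$, and then to push the resulting identities through the Atkin--Lehner twist $(\lambda_\alpha,\lambda_\alpha)_\ast$ defining $\kappa_\alpha[t_0,t_1]$. Because $\lambda_\alpha$ intertwines the two degeneracy maps, the roles of $\varpi_1$ and $\varpi_2$ (and of $U_p$ and its adjoint $U_p'$) get swapped in the passage from $\tilde{\kappa}$ to $\kappa$, which explains the shape of the stated identities.

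First I would verify the intermediate relations
\begin{align*}
(\varpi_1,\varpi_1)_\ast\tilde{\kappa}_{\alpha+1}[t_0',t_1'] &= p^3\,\tilde{\kappa}_\alpha[t_0,t_1],\\
(\varpi_2,\varpi_2)_\ast\tilde{\kappa}_{\alpha+1}[t_0',t_1'] &= (U_p,U_p)\,\tilde{\kappa}_\alpha[t_0,t_1],\\
\sum_{[t_0',t_1']}\tilde{\kappa}_{\alpha+1}[t_0',t_1'] &= (\varpi_1,\varpi_1)^\ast\tilde{\kappa}_\alpha[t_0,t_1].
\end{align*}
This is a formal consequence of Lemma~\ref{lemma:cycle-relations}: the null-homology modification $\ell+1-(T_\ell,\id)$ commutes with $(\varpi_i,\varpi_i)_\ast$ and $(\varpi_i,\varpi_i)^\ast$, and the cycle class map together with the connecting homomorphism defining $\tilde{\kappa}$ from $\Delta^\circ$ are functorial in the degeneracy maps. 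One needs to check the compatibility of the auxiliary closure $\Delta^\circ_\alpha[[a,b]]$ under $(\varpi_i,\varpi_i)$, which is automatic from the Cartesian diagram defining it and from the fact that the support $|\Delta^\circ_1[a,b]|$ is level $\alpha=1$ and so is unchanged. The $\bm{\kappa}^{-1/2}$-twist is also preserved because $\varpi_1,\varpi_2$ are $\bb{Q}$-rational.

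The second step is to apply $(\lambda_{\alpha+1},\lambda_{\alpha+1})_\ast$ and use the standard commutation relations (valid over $\bb{Q}(\zeta_{Np^{\alpha+1}})$, up to explicit diamond operators)
\[
\lambda_\alpha\circ\varpi_1 = \varpi_2\circ\lambda_{\alpha+1},\qquad \lambda_\alpha\circ\varpi_2 = \varpi_1\circ\lambda_{\alpha+1},
\]
together with the Atkin--Lehner conjugation $(\lambda_\alpha,\lambda_\alpha)_\ast\circ(U_p,U_p) = (U_p',U_p')\circ(\lambda_\alpha,\lambda_\alpha)_\ast$ on \'etale cohomology. Combining with the intermediate identities yields, for example,
\[
(\varpi_2,\varpi_2)_\ast\kappa_{\alpha+1}[t_0',t_1'] = (\lambda_\alpha,\lambda_\alpha)_\ast(\varpi_1,\varpi_1)_\ast\tilde{\kappa}_{\alpha+1}[t_0',t_1'] = p^3\,\kappa_\alpha[t_0,t_1],
\]
and symmetrically for the second identity, producing the swap $U_p\mapsto U_p'$. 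For the distribution relation one uses that $\lambda_{\alpha+1}$ is an automorphism over its field of definition, so that $(\lambda_{\alpha+1})_\ast\circ(\lambda_{\alpha+1})^\ast=\id$, giving
\[
(\lambda_{\alpha+1},\lambda_{\alpha+1})_\ast(\varpi_1,\varpi_1)^\ast\tilde{\kappa}_\alpha = (\varpi_2,\varpi_2)^\ast(\lambda_\alpha,\lambda_\alpha)_\ast\tilde{\kappa}_\alpha = (\varpi_2,\varpi_2)^\ast\kappa_\alpha.
\]

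The main obstacle in turning this sketch into a rigorous proof is the careful bookkeeping of diamond operators and twists: the commutation relations between $\lambda$ and the $\varpi_i$ are only correct up to diamond factors coming from the local components at $p$ of $\tau_{Np^\alpha}$ and $\eta_p$, and these factors must be shown to cancel exactly against the change in the twists $\bm{\kappa}^{1/2}$ and $\langle\omega_N,\omega_N\rangle$ when passing from level $\alpha+1$ down to level $\alpha$. This verification should reduce, in light of the conventions of Section~\ref{subsec:AL} and Lemma~\ref{lemma:cycles-galoisaction}, to an explicit local matrix computation at the prime $p$.
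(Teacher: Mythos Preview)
Your approach is essentially the same as the paper's: it too deduces the lemma from Lemma~\ref{lemma:cycle-relations} by invoking exactly the commutation relations $(\varpi_1,\varpi_1)_\ast\circ(\lambda_{\alpha+1},\lambda_{\alpha+1})_\ast=(\lambda_\alpha,\lambda_\alpha)_\ast\circ(\varpi_2,\varpi_2)_\ast$ and $(U_p',U_p')\circ(\lambda_\alpha,\lambda_\alpha)_\ast=(\lambda_\alpha,\lambda_\alpha)_\ast\circ(U_p,U_p)$. The paper states these relations cleanly, without any diamond correction terms, so your worry about extra diamond factors is unnecessary here; otherwise your sketch matches the paper's proof in both strategy and detail.
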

\begin{proof}
This follows from Lemma~\ref{lemma:cycles-relations} taking into account the relations
\[
(\varpi_1,\varpi_1)_\ast\circ(\lambda_{\alpha+1},\lambda_{\alpha+1})=(\lambda_\alpha,\lambda_\alpha)_\ast\circ (\varpi_2,\varpi_2)_\ast,
\]
\[
(U_p',U_p')\circ(\lambda_\alpha,\lambda_\alpha)_\ast=(\lambda_\alpha,\lambda_\alpha)_\ast\circ (U_p,U_p).
\]
\end{proof}

Define
\[
H^3_{\et}(Z_{\infty,\overline{\bb{Q}}},\cl{O})=\varprojlim_{\alpha\geq 1} H^3_{\et}(Z_{\alpha,\overline{\bb{Q}}},\cl{O}),
\]
where the projective limit is taken with respect to the maps $(\varpi_1,\varpi_1)_\ast$, and let
\[
\bb{V}=e'H^3_{\et}(Z_{\infty,\overline{\bb{Q}}},\cl{O})(2)(-\bm{\kappa}^{1/2})(\langle\omega_N,\omega_N\rangle),
\]
where
\[
e'=\lim_{n\to\infty}(U_p',U_p')^{n!}
\]
is Hida's anti-ordinary projector. For convenience we also introduce the notation
\[
V_\alpha=H^3_{\et}(Z_{\alpha,\overline{\bb{Q}}},\cl{O})(2)(-\bm{\kappa}^{1/2}).
\]

Now, given an element $[t_0,t_1]\in I_\infty:=\varprojlim_{\alpha\geq 1}I_\alpha$, in view of Lemma~\ref{lemma:classes-relations} we define a class
\[
\kappa_\infty[t_0,t_1]=\varprojlim_{\alpha\geq 1} (U_p',U_p')^{-\alpha}e'\kappa_{\alpha}[t_0,t_1]\in H^1(\bb{Q}(\zeta_1),\bb{V}).
\]

Observe that we can canonically identify $I_1$ with the set of torsion elements in $\cl{O}_F^\times\times\cl{O}_F^{\times,\mathrm{tr}=0}$ and therefore we can define a canonical lift of $[a,b]$ to $I_\infty$, which we will denote in the same way. Let $\omega_0$ and $\omega_1$ be characters of $(\bb{Z}/p\bb{Z})^\times$ and $(\cl{O}_F/p\cl{O}_F)^\times$, respectively, taking values in $E^\times$ for some finite extension $E$ of $\bb{Q}_p$. Assume that there exists a character $\eta$ of $(\bb{Z}/p\bb{Z})^\times$ taking values in $E^\times$ (in fact in $\bb{Z}_p^\times$) such that $\omega_0\omega_1=\eta^2$ as characters of $(\bb{Z}/p\bb{Z})^\times$. The character $\eta$ is then determined by $\omega_0$ and $\omega_1$ up to the unique quadratic character of $(\bb{Z}/p\bb{Z})^\times$. Choose a trace-zero element $\delta\in (\cl{O}_F/p\cl{O}_F)^\times$. Define
\[
\kappa_{\infty}(\omega_0,\omega_1;\eta)=\frac{1}{\#I_1}\sum_{[a,b]\in I_1}\eta^{-1}(N_{F/\bb{Q}}(a)\delta b)\omega_0(\delta b)\omega_1(a)\kappa_{\infty}[a,b].
\]

We decompose the cyclotomic character $\epsilon_{\cyc}:G_\bb{Q}\rightarrow \bb{Z}_p^\times=\mu_{p-1}\times (1+p\bb{Z}_p)$ as $\epsilon_{\cyc}=\omega_{\cyc}\theta_{\cyc}$, with $\omega_{\cyc}$ and $\theta_{\cyc}$ taking values in $\mu_{p-1}$ and $1+p\bb{Z}_p$, respectively. Note that we can extend the character $\bm{\kappa}^{1/2}:G_{\bb{Q}(\zeta_1)}\rightarrow (\Lambda\otimes\Lambda^F)^\times$ to a character of $G_\bb{Q}$ by $\tau\mapsto [\theta_{\cyc}(\tau)^{1/2}]\otimes[\theta_{\cyc}(\tau)^{1/2}]$. Then, as a map of $\cl{O}[G_\bb{Q}]$-modules, the Atkin--Lehner correspondence $(\lambda_\alpha,\lambda_\alpha)$ actually yields a map
\[
(\lambda_\alpha,\lambda_\alpha)_\ast: H^1(\bb{Q}(\zeta_1), V_\alpha)\longrightarrow H^1(\bb{Q}(\zeta_1), V_\alpha)(\langle \omega_N\omega_{\cyc},\omega_N\omega_{\cyc}\rangle).
\]
In the following lemma we consider the class $\kappa_\infty(\omega_0,\omega_1;\eta)$ as a class in
\[
H^1(\bb{Q}(\zeta_1), \bb{V})(\langle \omega_{\cyc},\omega_{\cyc}\rangle).
\]

\begin{lemma}
For all $\sigma_m\in\Gal(\bb{Q}(\zeta_\infty)/\bb{Q})$,
\[
\sigma_m\kappa_\infty(\omega_0,\omega_1;\eta)=\eta(m)\kappa_\infty(\omega_0,\omega_1;\eta).
\]
For all $z_0\in\mu(\bb{Z}_p^\times)$, $z_1\in\mu(\cl{O}_{F,p}^\times)$,
\[
\langle z_0,z_1\rangle \kappa_\infty(\omega_0,\omega_1;\eta)=\omega_0(z_0)\omega_1(z_1^\sigma)\kappa_\infty(\omega_0,\omega_1;\eta).
\]
\end{lemma}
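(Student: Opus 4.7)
The plan is to reduce both identities to the cycle-level relations of Lemma~\ref{lemma:cycles-galoisaction} in two steps: first lift those relations to the cohomology classes $\kappa_\infty[t_0,t_1]$, and then perform a change of variable in the averaging sum defining $\kappa_\infty(\omega_0,\omega_1;\eta)$, using the hypothesis $\omega_0\omega_1=\eta^2$ to collapse the resulting character factors.

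In the first step, I would verify that
\begin{align*}
\sigma_m\cdot\kappa_\infty[t_0,t_1]&=\kappa_\infty[m\cdot t_0,\,m\cdot t_1],\\
\langle z_0,z_1\rangle\cdot\kappa_\infty[t_0,t_1]&=\kappa_\infty[(z_0z_1)\cdot t_0,\,N_{F/\bb{Q}}(z_1)\cdot t_1]
\end{align*}
by pushing Lemma~\ref{lemma:cycles-galoisaction} through each ingredient in the construction of $\kappa_\infty[t_0,t_1]$: the null-homology modification by $\ell+1-(T_\ell,\id)$, the connecting map attached to the pullback extension, the Atkin--Lehner pushforward $(\lambda_\alpha,\lambda_\alpha)_\ast$, and Hida's anti-ordinary projector $e'$. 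All of these commute with the Galois and diamond operators modulo the twists already built into $\bb{V}$. The only bookkeeping subtlety is that, by the convention introduced before Lemma~\ref{lemma:classes-relations}, the symbol $\langle z_0,z_1\rangle$ here denotes the $\Lambda\otimes\Lambda^F$-module action $[z_0]\otimes[z_1]$, which coincides with the Hecke operator $\langle z_0^{-1},z_1^{-1}\rangle$; with this identification, the second formula is a direct restatement of Lemma~\ref{lemma:cycles-galoisaction}.

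Granting these, both identities are obtained by reindexing. For the Galois statement, applying $\sigma_m$ sends the summand indexed by $[a,b]\in I_1$ to $\kappa_\infty[m\cdot a,\,m\cdot b]$; setting $[a',b']=[m\cdot a,\,m\cdot b]$ and using $N_{F/\bb{Q}}(m)=m^2$ gives a total character coefficient of $\eta(m)^3\omega_0(m)^{-1}\omega_1(m)^{-1}$, which simplifies to $\eta(m)$ via $\omega_0\omega_1=\eta^2$ on $(\bb{Z}/p\bb{Z})^\times$. The diamond identity is analogous: substituting $[a',b']=[(z_0z_1)a,\,N_{F/\bb{Q}}(z_1)b]$ and using $N_{F/\bb{Q}}(z_1)=z_1z_1^\sigma$ together with a second application of $\omega_0\omega_1=\eta^2$ collapses the coefficient to $\omega_0(z_0)\omega_1(z_1^\sigma)$. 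The main obstacle is the first step: tracking how the twists $(-\bm{\kappa}^{1/2})$, $(\langle\omega_N,\omega_N\rangle)$, and the additional $(\langle\omega_{\cyc},\omega_{\cyc}\rangle)$ conspire so that no spurious character factors appear. A useful sanity check is that for $m\in 1+p\bb{Z}_p$ the twist $\bm{\kappa}^{-1/2}(\sigma_m)$ and the cycle shift cancel through the diamond relation, recovering the expected inner-automorphism trivial action on $H^1(\bb{Q}(\zeta_1),\bb{V})$ in agreement with $\eta(m)=1$.
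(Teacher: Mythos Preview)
Your approach is exactly what the paper has in mind: its proof is the single line ``It follows easily from the definition of the class $\kappa_\infty(\omega_0,\omega_1;\eta)$ using Lemma~\ref{lemma:cycles-galoisaction},'' and you have correctly unpacked this into (i) lifting the cycle relations to $\kappa_\infty[t_0,t_1]$ through the construction, and (ii) reindexing the average and collapsing via $\omega_0\omega_1=\eta^2$. Your character bookkeeping checks out in both cases.
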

\begin{proof}
It follows easily from the definition of the class $\kappa_\infty(\omega_0,\omega_1;\eta)$ using Lemma~\ref{lemma:cycles-galoisaction}.
\end{proof}

Consider the idempotent
\[
e_{\omega_0,\omega_1}=\frac{1}{\# I_1}\sum_{(z_0,z_1)\in  \mu(\bb{Z}_p^\times)\times\mu(\cl{O}_{F,p}^\times)} \omega_0^{-1}(z_0)\omega_1^{-1}(z_1^\sigma)\langle z_0, z_1\rangle
\]
and let $\bb{V}_{\omega_0,\omega_1}=e_{\omega_0,\omega_1}\bb{V}$. As a consequence of the previous lemma, we have that
\[
\kappa_\infty(\omega_0,\omega_1;\eta)\in H^1(\bb{Q}(\zeta_1), \bb{V}_{\omega_0,\omega_1})(\bm{\eta}^2),
\]
where $\bm{\eta}=\eta\circ\epsilon_{\cyc}$,
and, after twisting, we can regard $\kappa_\infty(\omega_0,\omega_1;\eta)$ as the restriction of a class in
$H^1(\bb{Q}, \bb{V}_{\omega_0,\omega_1}(\bm{\eta}))$.
We use the same notation $\kappa_\infty(\omega_0,\omega_1;\eta)$ to denote this class.

\section{Euler system norm relations}\label{sec:norm-relations}

In this section, we construct a split anticyclotomic Euler system for the Asai  representation attached to ($p$-ordinary) Hilbert modular forms over real quadratic fields.

\subsection{CM Hida famlies}\label{subsec:CM-hida}

Let $\cl{K}$ be an imaginary quadratic field of discriminant $-D<0$ and let $\varepsilon_{\cl{K}}$ be the corresponding quadratic character. Assume that $p$ splits in $\cl{K}$ as 
\[
(p)=\frk{p}\overline{\frk{p}},
\]
with $\frk{p}$ the prime of $\cal{K}$ above $p$ induced by our fixed embedding $\iota_p:\overline{\bb{Q}}\hookrightarrow\overline{\bb{Q}}_p$. Assume also that $p$ does not divide the class number $h_{\cl{K}}$. Let $\psi$ be a Hecke character of $\cl{K}$ of conductor $\frk{c}$ coprime to $p$ and infinity type $(1-k,0)$ for some even integer $k\geq 2$, taking values in a finite extension $L/\cl{K}$. Let $\chi_\psi$ be the unique Dirichlet character modulo $N_{\cl{K}/\bb{Q}}(\frk{c})$ such that 
\[
\psi((n))=n^{k-1} \chi_\psi(n)
\]
for integers $n$ coprime to $N_{\cl{K}/\bb{Q}}(\frk{c})$. Put $N_\psi=N_{\cl{K}/\bb{Q}}(\frk{c})D$, and let $\theta_{\psi} \in S_k(N_{\psi}, \chi_\psi\varepsilon_{\cl{K}})$ be the theta series attached to $\psi$, i.e.,
\[
\theta_{\psi} = \sum_{(\frk{a}, \frk{c})=1} \psi(\frk{a}) q^{N_{\cl{K}/\bb{Q}}(\frk{a})}.
\]

Let $\frk{P}$ denote the prime of $L$ above $\frk{p}$ induced by $\iota_p$, put $E=L_{\frk{P}}$ and let $\cl{O}\subset E$ be the ring of integers. Let $\lambda$ denote the unique Hecke character of infinity type $(-1,0)$ and conductor $\frk{p}$ whose $p$-adic avatar $\lambda_\mathfrak{P}:\cal{K}^\times\backslash \bb{A}_{\cl{K},f}^{\times}\rightarrow E^\times$, defined by
\[
\lambda_{\frk{P}}(x)=x_{\frk{p}}^{-1} \lambda(x)
\]
for $x_{\frk{p}}$ the $\frk{p}$-component of $x\in\bb{A}_{\cl{K},f}$, factors through $\Gamma_\frk{p}$, the Galois group of the unique $\bb{Z}_p$-extension of $\cl{K}$ unramified outside $\frk{p}$. Then we can uniquely write $\psi=\psi_0\lambda^{k-1}$, with $\psi_0$ a ray class character of conductor dividing $\frk{c}\frk{p}$. Since $(\frk{c},p)=1$ and $k$ is even, it easily follows that $\psi$ is \emph{non-Eisenstein} and \emph{$p$-distinguished}, meaning that
\begin{equation}\label{eq:513}
\psi\vert_{\cl{O}_{\cl{K},\frk{p}}^\times}\not\equiv\omega\;({\mathrm{mod}}\;{\frk{P}}),
\end{equation}
where $\omega$ is the Teichm\"uller character (see \cite[Rem.~5.1.3]{LLZ0}. Letting $\psi_{\frk{P}}$ be the continuous $E$-valued character of $\cl{K}^{\times} \backslash \bb{A}_{\cl{K},\mathrm{f}}^{\times}$ defined by
\[
\psi_{\frk{P}}(x)=x_{\frk{p}}^{1-k} \psi(x),
\]
and viewing it as a character of $G_{\cl{K}}$ via the geometrically normalized Artin map, the $p$-adic representation (dual to Deligne's) attached to the eigenform $\theta_\psi$ is given by $\Ind_{\cl{K}}^\bb{Q} E(\psi_\frk{P}^{-1})$. 
Note that by (\ref{eq:513}), the associated residual representation is absolutely irreducible and $p$-distinguished.

Consider the $q$-expansion
\[
\Theta=\sum_{(\frk{a},\frk{cp})=1}[\frk{a}]q^{N_{\cl{K}/\bb{Q}}(\frk{a})}\in \cl{O}[\![H_{\frk{cp}^\infty}]\!][\![q]\!],
\]
where $H_{\frk{cp}^\infty}$ denotes the maximal pro-$p$ quotient of the ray class group of $\cl{K}$ of conductor $\frk{cp}^\infty$, and $[\frk{a}]$ is the image of $\frk{a}$ in $H_{\frk{cp}^\infty}$ under the Artin map. Since we assume that $p\nmid h_{\cl{K}}$, we can factor $H_{\frk{cp}^\infty}\cong H_\frk{c}\times \Gamma_\frk{p}$. Hence, we have $\Theta\in \cl{O}[H_\frk{c}]\otimes_{\cl{O}}\cl{O}[\![\Gamma_\frk{p}]\!][\![q]\!]$, and putting $\bar{\psi}_0:=\psi_0\vert_{H_\frk{c}}$ we can specialize this to
\begin{equation}\label{eq:CM-F}
\hf = \sum_{(\frk{a},\frk{cp})=1}\bar{\psi}_0([\frk{a}])[\frk{a}]_{\frk{p}}q^{N_{\cl{K}/\bb{Q}}(\frk{a})}\in \Lambda_{\hf}[\![q]\!],
\end{equation}
where $\Lambda_{\hf}=\cl{O}[\![\Gamma_{\frk{p}}]\!]$ and $[\frk{a}]_{\frk{p}}$ denotes the image of $\frk{a}$ in $\Gamma_\frk{p}$. We identify $\Gamma_{\frk{p}}$ with $\Gamma=1+p\bb{Z}_p$ via the isomorphism $\Gamma\cong\cl{O}_{\cl{K},\frk{p}}^{(1)}\rightarrow \Gamma_{\frk{p}}$ defined by $u\mapsto \mathrm{art}_{\frk{p}}(u)^{-1}$, where $\mathrm{art}_{\frk{p}}$ stands for the geometric local Artin map, and in this way we identify $\Lambda_{\hf}$ with $\Lambda$. We can therefore regard $\hf$ as a primitive Hida family of tame level $N_\psi$ and character $\chi_\psi\varepsilon_{\cl{K}}\omega^{k-2}$ passing through the ordinary $p$-stabilization of $\theta_\psi$.

Let $\Gamma_{\ac}$ be the Galois group of the anticyclotomic $\bb{Z}_p$-extension of $\cl{K}$; this can be identified with the anti-diagonal in $(1+p\bb{Z}_p)\times (1+p\bb{Z}_p)\cong \cl{O}_{\cl{K},\frk{p}}^{(1)}\times \cl{O}_{\cl{K},\overline{\frk{p}}}^{(1)}$ via the Artin map. Let 
\[
\kappa_{\ac}:\Gamma_{\ac}\longrightarrow \bb{Z}_p^\times
\]
be the character defined by mapping the element $((1+p)^{-1},(1+p))$ to $1+p$ and let $\bm{\kappa}_{\ac}:\Gamma_{\ac}\rightarrow \Lambda^\times$ be the character defined by mapping $((1+p)^{-1}, (1+p))$ to the group-like element $[1+p]$. We use the same notation for the corresponding characters of $G_\bb{Q}$.

For each positive integer $n$, let $\cl{K}[n]$ denote the maximal $p$-subextension of the ring class field of $\cl{K}$ of conductor $n$ and let $R_n=\Gal(\cl{K}[n]/\cl{K})$. Also, for $n$ coprime to $pN_\psi$ and $\alpha>0$, let $Y_{\alpha,n}$ denote the affine modular curve of level $V^1(N_\psi p^\alpha)\cap V_0(n^2)$, and put
\[
H^1_{\et}(Y_{\infty,n,\overline{\bb{Q}}},\cl{O})=\varprojlim_\alpha H^1_{\et}(Y_{\alpha,n,\overline{\bb{Q}}},\cl{O}),
\]
with the projective limit defined with respect to the maps $\varpi_{1\ast}$. We also let
\[
\bb{T}'(N_\psi(n^2))=e'\frk{h}_{\bb{Q}}(V^1(N_\psi)\cap V_0(n^2);\cl{O})
\]
denote the big anti-ordinary Hecke algebra acting on $e'H^1_{\et}(Y_{\infty,n,\overline{\bb{Q}}},\cl{O})$, and we regard $\bb{T}'(N_\psi(n^2))$ as a $\Lambda$-module by sending the group-like element $[z]\in \Lambda$ to $\langle z\rangle'\in \bb{T}'(N_\psi(n^2))$.

Let $\Gamma_{n,\frk{p}}$ denote the Galois group over $\cl{K}$ of the compositum of $\cl{K}[n]$ and the unique $\bb{Z}_p$-extension of $\cl{K}$ unramified outside $\frk{p}$, and note that $\Gamma_{n,\frk{p}}\cong R_n\times\Gamma_\frk{p}$. 

\begin{prop}
There exists a homomorphism $\phi_n:\bb{T}'(N_\psi(n^2))\rightarrow \Lambda[R_n]\cong \cl{O}[\![\Gamma_{n,\frk{p}}]\!]$ defined on generators by
\[
\phi_n(T_q')=\omega(q)^{k/2-1}q^{1-k/2}\sum_{\frk{q}}(\kappa_{\ac}^{2-k}\psi)(\frk{q})[\frk{q}],
\]
for every rational prime $q$, where the sums runs over ideals of $\cl{O}_K$ coprime to $n\frk{c}\frk{p}$ of norm $q$; and
\[
\phi_n(\langle \varpi_q^{-1}\rangle')=(\omega^{k-2}\chi_\psi\varepsilon_\cl{K})(q)[(q)]
\]
for all rational prime coprime to $pN_\psi n$.
\end{prop}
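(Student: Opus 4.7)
The plan is to exhibit $\phi_n$ as the $\Lambda$-algebra homomorphism attached to a $\Lambda[R_n]$-adic CM eigenform $\hf_n$ interpolating theta series of twists of $\psi$ by ring class characters of conductor dividing $n$. Extending the construction of $\hf$ in (\ref{eq:CM-F}) to incorporate the anticyclotomic variable indexed by $R_n$, one considers the formal $q$-expansion
\[
\hf_n := \sum_{(\frk{a},\,n\frk{c}\frk{p})=1}\bar{\psi}_0([\frk{a}])\,[\frk{a}]_{n,\frk{p}}\,q^{N_{\cl{K}/\bb{Q}}(\frk{a})}\in\Lambda[R_n][\![q]\!],
\]
where $[\frk{a}]_{n,\frk{p}}$ denotes the image of $\frk{a}$ in $\Gamma_{n,\frk{p}}\cong R_n\times\Gamma_{\frk{p}}$ under the Artin map. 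At every arithmetic specialization $\nu$ determined by a ring class character $\chi$ of $R_n$ and an arithmetic character $\lambda_{\frk{P}}^{k'-1}\eta$ of $\Gamma_\frk{p}$, the specialization $\nu(\hf_n)$ recovers the ordinary $p$-stabilization of the theta series attached to $\psi_0\chi\lambda^{k'-1}\eta$, a classical elliptic eigenform of level $N_\psi n^2 p^\alpha$ and nebentype compatible with $\chi_\psi\varepsilon_{\cl{K}}\omega^{k-2}$; since these specializations are Zariski-dense in $\Lambda[R_n]$, the formal series $\hf_n$ is a $\Lambda[R_n]$-adic Hida family of tame level $V^1(N_\psi)\cap V_0(n^2)$.

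Next I would compute the Hecke eigenvalues on $q$-expansions. For each rational prime $q\nmid n\frk{c}\frk{p}$, the contravariant operator $T_q$ acts on $\hf_n$ by $\sum_{\frk{q}}\bar{\psi}_0([\frk{q}])[\frk{q}]_{n,\frk{p}}$, where $\frk{q}$ runs over the prime ideals of $\cl{O}_{\cl{K}}$ of norm $q$, and the diamond operator $\langle\varpi_q\rangle$ acts through the nebentype. Conjugating by the Atkin--Lehner involution $\lambda_{N_\psi p^\alpha n^2}$ of Section~\ref{subsec:AL} converts these into eigenvalues for the covariant transposes $T_q'$ and $\langle\varpi_q^{-1}\rangle'$; this conversion produces the normalization $\omega(q)^{k/2-1}q^{1-k/2}=\langle q\rangle^{1-k/2}$, which encodes the $\bm{\kappa}^{1/2}$-twist built into the anti-ordinary representation $\bb{V}$ in Section~\ref{subsec:classes}. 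Using the decomposition $\psi=\psi_0\lambda^{k-1}$ and the identification $\Gamma_{\frk{p}}\cong\cl{O}_{\cl{K},\frk{p}}^{(1)}$ fixed in Section~\ref{subsec:CM-hida}, the CM eigenvalue $\bar{\psi}_0([\frk{q}])[\frk{q}]_{n,\frk{p}}$ then reorganizes into $(\kappa_{\ac}^{2-k}\psi)(\frk{q})[\frk{q}]$ by separating the anticyclotomic contribution from the remaining (ring class plus weight) piece; the same mechanism applied to the diamond operator yields the formula $(\omega^{k-2}\chi_\psi\varepsilon_{\cl{K}})(q)[(q)]$.

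The main technical obstacle is the precise matching of normalizations: reconciling the $\bm{\kappa}^{1/2}$-twist built into $\bb{V}$, the Atkin--Lehner-induced passage from $T_q$ to $T_q'$, and the splitting $\psi=\psi_0\lambda^{k-1}$ with the identification between $\mrm{cl}$- and $\cl{K}$-level objects coming from the Artin map. Once the identities in the statement are verified at a Zariski-dense set of arithmetic points of $\Lambda[R_n]$, the existence of a well-defined $\Lambda$-algebra homomorphism $\phi_n:\bb{T}'(N_\psi(n^2))\to\Lambda[R_n]$ satisfying them is automatic, because the operators $T_q'$ (for $q\nmid pN_\psi n$) together with $\langle\varpi_q^{-1}\rangle'$ topologically generate $\bb{T}'(N_\psi(n^2))$ as a $\Lambda$-algebra.
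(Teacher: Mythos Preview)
Your approach is correct and coincides with what the paper does: the paper's proof is a one-line citation to \cite[Prop.~3.2.1]{LLZ} (existence of the CM Hida family and its Hecke eigensystem) together with \cite[Lem.~3.5]{ACR} (the translation into the present normalizations), and your sketch is precisely an unpacking of what those references accomplish---build the $\Lambda[R_n]$-adic theta family $\hf_n$, verify it is a Hida family by specialization at a Zariski-dense set, and read off the eigenvalues of $T_q'$ and $\langle\varpi_q^{-1}\rangle'$.

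One small correction on the bookkeeping: the factor $\omega(q)^{k/2-1}q^{1-k/2}=\langle q\rangle^{1-k/2}$ does not arise from the Atkin--Lehner conjugation (for $q\nmid pN_\psi n$ one has simply $T_q'=\langle q\rangle^{-1}T_q$ as operators, which contributes only the nebentype), nor directly from the $\bm{\kappa}^{1/2}$-twist on $\bb{V}$; rather it comes from the chosen identification $\Lambda_{\hf}\cong\Lambda$ via $\Gamma_\frk{p}\cong 1+p\bb{Z}_p$ together with the decomposition $\psi=\psi_0\lambda^{k-1}$, which shifts the weight-$k$ arithmetic point away from the ``origin'' of $\Lambda$ and produces the half-integral weight normalization. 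You correctly flag the matching of normalizations as the only genuine work, and your plan to verify the eigenvalue identities at arithmetic specializations and then invoke Zariski density is exactly how the cited references proceed.
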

\begin{proof}
    This follows from \cite[Prop.~3.2.1]{LLZ} as in \cite[Lemma~3.5]{ACR}.
\end{proof}

Let $n$ be a positive integer coprime to $pN_\psi$ and let $q$ be a rational prime coprime to $pN_\psi$. Let $\pi_{11,\ast}$, $\pi_{12,\ast}$ and $\pi_{22,\ast}$ denote the degeneracy maps
\[
\pi_{ij,\ast}=\pi_{i,q,\ast}\circ \pi_{j,q,\ast}: H^1_{\et}(Y_{\infty,nq,\overline{\bb{Q}}},\cl{O}) \longrightarrow H^1_{\et}(Y_{\infty,n,\overline{\bb{Q}}},\cl{O}).
\]
Following \cite[\S~3.3]{LLZ}, we define norm maps
\[
\cl{N}_{n}^{nq}:\Lambda[R_{nq}]\otimes_{\phi_{nq}} H^1_{\et}(Y_{\infty,nq,\overline{\bb{Q}}},\cl{O})\longrightarrow \Lambda[R_{n}]\otimes_{\phi_n} H^1_{\et}(Y_{\infty,n,\overline{\bb{Q}}},\cl{O})
\]
by the formulae:
\begin{itemize}
    \item if $q\mid n$,
    \[
    \cl{N}_{n}^{nq}=1\otimes \pi_{11,\ast};
    \]
    \item if $q\nmid n$ and $q$ splits in $\cl{K}$ as $(q)=\frk{q}\overline{\frk{q}}$,
    \begin{align*}
    \cl{N}_{n}^{nq}&=1\otimes \pi_{11,\ast}-\omega^{(k-2)/2}(q)\left(\frac{\kappa_{\ac}^{2-k}\psi_{\frk{P}}(\Fr_\frk{q}^{-1})[\frk{q}]}{q^{k/2}}+\frac{\kappa_{\ac}^{2-k}\psi_{\frk{P}}(\Fr_{\overline{\frk{q}}}^{-1})[\overline{\frk{q}}]}{q^{k/2}}\right)\otimes \pi_{12,\ast}\\
    &+\frac{\chi_\psi(q)\omega^{k-2}(q)}{q}[(q)]\otimes \pi_{22,\ast};
    \end{align*}
    \item if $q\nmid n$ and $q$ is inert in $\cl{K}$,
    \[
    \cl{N}_{n}^{nq}=1\otimes \pi_{11,\ast}-\frac{\chi_\psi(q)\omega^{k-2}(q)}{q}[(q)]\otimes \pi_{22,\ast}.
    \]
\end{itemize}

More generally, if $n'=nr$, where $r$ is coprime to $pN_\psi$, we define $\cl{N}_{n}^{n'}$ by composing the previously defined norm maps in the natural way.

Let $\bm{\kappa}_0^{1/2}:G_\bb{Q}\rightarrow \Lambda^\times$ be the character defined by $\tau\mapsto[\theta_{\cyc}(\tau)^{1/2}]$ and let $\bm{\eta}_0:G_\bb{Q}\rightarrow E^\times$ be the character defined by $\tau\mapsto \omega_{\cyc}(\tau)^{1-k/2}$.

\begin{lemma}\label{lemma:LLZ}
Let $B$ be the set of positive integers coprime to $pN_\psi$. Then there is a family of $\Lambda[G_\bb{Q}]$-equivariant isomorphisms
\begin{equation*}
\nu_{n}:\Lambda[R_{n}]\otimes_{\phi_n} H^1_{\et}(Y_{\infty,n,\overline{\bb{Q}}},\cl{O})(\bm{\eta}_0\bm{\kappa}_0^{-1/2}) \longrightarrow  \Ind_{\cl{K}[n]}^\bb{Q}\Lambda(\psi_\frk{P}^{-1}\kappa_{\ac}^{k-2}\bm{\kappa}_{\ac}^{-1})(-k/2)
\end{equation*}
for all $n\in B$, such that for $n\mid n'$ the diagram
\[
\begin{tikzcd}
\Lambda[R_{n'}]\otimes_{\phi_{n'}} H^1_{\et}(Y_{\infty,n',\overline{\bb{Q}}},\cl{O})(\bm{\eta}_0\bm{\kappa}_0^{-1/2}) \arrow[r, "\nu_{n'}"] \arrow[d, "\cl{N}_n^{n'}"] & \Ind_{\cl{K}[n']}^\bb{Q}\Lambda(\psi_\frk{P}^{-1}\kappa_{\ac}^{k-2}\bm{\kappa}_{\ac}^{-1})(-k/2) \arrow[d, "\mathrm{Norm}"] \\
\Lambda[R_{n}]\otimes_{\phi_n} H^1_{\et}(Y_{\infty,n,\overline{\bb{Q}}},\cl{O})(\bm{\eta}_0\bm{\kappa}_0^{-1/2}) \arrow[r, "\nu_n"] & \Ind_{\cl{K}[n]}^\bb{Q}\Lambda(\psi_\frk{P}^{-1}\kappa_{\ac}^{k-2}\bm{\kappa}_{\ac}^{-1})(-k/2)
\end{tikzcd}
\]
commutes.
\end{lemma}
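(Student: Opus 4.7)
The plan is to proceed in direct analogy with \cite[Prop.~3.2.1]{LLZ} (cf.~\cite[Lemma~3.5]{ACR}), exploiting the CM nature of $\hf$ to realize the Hecke-algebra tensor product on the left-hand side as an explicit induced representation.

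First I would construct $\nu_n$. Since $\hf$ is the CM Hida family attached to $\psi$, a combination of Shapiro's lemma and Hida's $\Lambda$-adic Eichler--Shimura isomorphism identifies the big Galois representation attached to the family with $\Ind_{\cl{K}}^\bb{Q}\Lambda(\psi_{\frk{P}}^{-1}\kappa_{\ac}^{k-2}\bm{\kappa}_{\ac}^{-1})$, up to the indicated Tate twist. The tensor product $\Lambda[R_n]\otimes_{\phi_n}H^1_{\et}(Y_{\infty,n,\overline{\bb{Q}}},\cl{O})$ extracts the $\hf$-isotypic piece of the cohomology with added tame level $n^2$, and by the explicit formulas for $\phi_n(T_q')$ and $\phi_n(\langle\varpi_q^{-1}\rangle')$ together with Brauer--Nesbitt, it suffices to match the characteristic polynomials of arithmetic Frobenii at primes $q\nmid npN_\psi$. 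Under the identification $\Lambda[R_n]\cong \cl{O}[\![\Gamma_{n,\frk{p}}]\!]$, one sees that $\phi_n(T_q')$ encodes precisely the trace of $\Fr_q$ on $\Ind_{\cl{K}[n]}^\bb{Q}\Lambda(\psi_{\frk{P}}^{-1}\kappa_{\ac}^{k-2}\bm{\kappa}_{\ac}^{-1})$, while the twist $\bm{\eta}_0\bm{\kappa}_0^{-1/2}$ and the Tate twist $(-k/2)$ align the central characters on both sides. The residual irreducibility from \eqref{eq:513} then guarantees that $\nu_n$ is well defined up to a scalar, which can be pinned down once and for all.

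Next I would verify the norm compatibility, reducing to the case $n'=nq$ for a single prime $q$ coprime to $pN_\psi$. The three cases in the definition of $\cl{N}_n^{nq}$ are designed so as to implement on the left-hand side, via the degeneracy maps $\pi_{ij,\ast}$, the natural corestriction
\[
\Ind_{\cl{K}[nq]}^\bb{Q}\Lambda(\psi_\frk{P}^{-1}\kappa_{\ac}^{k-2}\bm{\kappa}_{\ac}^{-1})\longrightarrow \Ind_{\cl{K}[n]}^\bb{Q}\Lambda(\psi_\frk{P}^{-1}\kappa_{\ac}^{k-2}\bm{\kappa}_{\ac}^{-1})
\]
on the right-hand side. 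For $q\mid n$ the corestriction is a pure pushforward, matching $1\otimes\pi_{11,\ast}$. For $q\nmid n$ split as $\frk{q}\overline{\frk{q}}$, the corestriction decomposes according to the contributions of $\Fr_\frk{q}^{-1}$ and $\Fr_{\overline{\frk{q}}}^{-1}$ coming from the Euler factor at $q$, matched on the cohomological side by $\pi_{12,\ast}$ with the prescribed coefficients, together with a diagonal $[q]$-term produced by $\pi_{22,\ast}$. For $q\nmid n$ inert in $\cl{K}$, only the $[q]$-term survives because $q$ remains prime in $\cl{K}$ and there is no degree-one contribution at $\frk{q}$.

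The hard part will be the bookkeeping in the split case: one must track the Tate twist $(-k/2)$, the auxiliary twisting character $\bm{\eta}_0\bm{\kappa}_0^{-1/2}$, and the factor $\omega^{(k-2)/2}(q)$ appearing in $\cl{N}_n^{nq}$, and verify that they all conspire to match exactly the corestriction on the induction side. This is essentially the calculation of \cite[\S3.3]{LLZ} transplanted to our Hida-theoretic setting; the passage through the anti-ordinary idempotent $e'$ (rather than the usual ordinary one) is what allows the argument to lift from classical weight to the full $\Lambda$-adic level.
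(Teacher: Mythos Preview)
Your outline is essentially correct in spirit and follows the same route as the paper, which simply cites \cite[Cor.~5.2.6]{LLZ} and \cite[Cor.~3.6]{ACR}; you are in effect sketching the content of those references, so there is no real divergence of approach.

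One point deserves more care. Matching characteristic polynomials of Frobenii via Brauer--Nesbitt gives you only an isomorphism of semisimplified representations over the fraction field of $\Lambda$, not an integral isomorphism of $\Lambda[G_\bb{Q}]$-modules. To upgrade this to an honest isomorphism of $\Lambda$-lattices you need the structural input from \cite{LLZ}: under the non-Eisenstein and $p$-distinguished hypothesis \eqref{eq:513}, the localized Hecke module $H^1_{\et}(Y_{\infty,n,\overline{\bb{Q}}},\cl{O})$ is free of rank two over the localized Hecke algebra, and the map $\phi_n$ then realizes the tensor product $\Lambda[R_n]\otimes_{\phi_n}(-)$ directly as the desired induced lattice. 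Your invocation of residual irreducibility to ``pin down'' $\nu_n$ is pointing in the right direction, but the freeness statement is what actually makes the integral argument go through; this is the substance of \cite[Cor.~5.2.6]{LLZ} and should be cited rather than redone.
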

\begin{proof}
    This follows from \cite[Cor.~5.2.6]{LLZ} as in \cite[Cor.~3.6]{ACR}.
\end{proof}

\subsection{Norm relations}

Let $\psi$ be a Hecke character of $\cal{K}$ as above, and let $\hg\in \bar{\mathbb S}_F^{\ord}(\frk{N}_g,\chi_{\hg};\bb{I}_{\hg})$ be a Hida family passing through the ordinary $p$-stabilization of an ordinary Hilbert eigenform $g\in S_l(\frk{N}_g,\chi_g;\bb{C})$. Assume that 
\[
\chi_\psi\varepsilon_\cl{K}\chi_g\vert_{\bb{A}_{\bb{Q},f}^\times}=1.
\]
Let $N=N_\psi N_{F/\bb{Q}}(\frk{N}_g)$, and put 
\[
K=V^1(N)\leq \mathrm{GL}_2(\mathbb{A}_{F,f}),\quad U=K\cap \mathrm{GL}_2(\mathbb{A}_{\mathbb{Q},f}).
\]
For each positive integer $\alpha$, let $S_{\alpha}$ be the Hilbert modular surface of level $K^1(p^\alpha)$ and let $Y_\alpha$ be the modular curve of level $U^1(p^\alpha)$. In Section~\ref{sec:cycles}, we constructed codimension-2 cycles
\[
\Delta_\alpha[t_0,t_1]\in\ch^2(Y_\alpha\times S_\alpha)(\bb{Q}(\zeta_\alpha)).
\]

For any positive integer $m$ coprime to $pN$, let $Y_{10}(p^\alpha,m)$ denote the modular curve of level $U^1(p^\alpha)\cap U_0(m)$, let $S_{10}(p^\alpha,m)$ denote the Hilbert modular surface of level $K^1(p^\alpha)\cap K_0(m)$ and let $\tilde{\Delta}_{\alpha,m}[t_0,t_1]\in \CH(Y_{10}(p^\alpha,m)\times S_{10}(p^\alpha,m))$ be the cycle constructed in Section~\ref{sec:cycles} taking as base level $K_0(m)=K\cap V_0(m)$ instead of $K$. Recall that the construction requires the choice of a connected component $\bb{Y}_m(p^\alpha)$ of the modular curve $Y_m(p^\alpha)$ of level $U(p^\alpha)\cap U_0(m)$. The complex points of $Y_m(p^\alpha)$ are given by
\[
Y_m(p^\alpha)(\bb{C})= \mathrm{GL}_2(\bb{Q})^+\backslash \cl{H}\times \mathrm{GL}_2(\bb{A}_{\bb{Q},f})/U(p^\alpha)\cap U_0(m),
\]
and we choose $\bb{Y}_m(p^\alpha)$ to be the image of the embedding $\Gamma_{\alpha,m}\backslash\cl{H}\hookrightarrow Y_m(p^\alpha)(\bb{C})$ taking $z$ to $[z,1]$, where $\Gamma_{\alpha,m}=\mathrm{GL}_2(\bb{Q})^+\cap U(p^\alpha)\cap U_0(m)$.

\begin{lemma}\label{lem:deg-maps}
    Let $m$ be a positive integer coprime to $pN$ and let $q$ be a rational prime coprime to $pN$ which splits in $F$ as $(q)=\frk{q}_1\frk{q}_2$. Then
    \begin{align*}
    &(\pi_{1,q},\pi_{1,\frk{q}_1}\pi_{2,\frk{q}_2})_\ast \tilde{\Delta}_{\alpha,mq}[t_0,t_1]=(1,T_{\frk{q}_2}) \tilde{\Delta}_{\alpha,m}[t_0,t_1];\\
    & (\pi_{1,q},\pi_{2,\frk{q}_1}\pi_{2,\frk{q}_2})_\ast \tilde{\Delta}_{\alpha,mq}[t_0,t_1]=(T_q',1) \tilde{\Delta}_{\alpha,m}[q^{-1}t_0,q^{-1}t_1]; \\
    & (\pi_{2,q},\pi_{1,\frk{q}_1}\pi_{1,\frk{q}_2})_\ast \tilde{\Delta}_{\alpha,mq}[t_0,t_1]=(T_q,1) \tilde{\Delta}_{\alpha,m}[t_0,t_1]; \\ 
    &(\pi_{2,q},\pi_{1,\frk{q}_1}\pi_{2,\frk{q}_2})_\ast \tilde{\Delta}_{\alpha,mq}[t_0,t_1]=(1, T_{\frk{q}_1}') \tilde{\Delta}_{\alpha,m}[q^{-1}t_0,q^{-1}t_1].
    \end{align*}
    If $q$ is coprime to $m$ we have
    \begin{align*}
    & (\pi_{1,q},\pi_{1,\frk{q}_1}\pi_{1,\frk{q}_2})_\ast \tilde{\Delta}_{\alpha,mq}[t_0,t_1]=(q+1) \tilde{\Delta}_{\alpha,m}[t_0,t_1]; \\ 
    & (\pi_{2,q},\pi_{2,\frk{q}_1}\pi_{2,\frk{q}_2})_\ast \tilde{\Delta}_{\alpha,mq}[t_0,t_1]=(q+1)\tilde{\Delta}_{\alpha,m}[q^{-1}t_0,q^{-1}t_1];
    \end{align*}
    otherwise, if $q\mid m$, we have
    \begin{align*}
    & (\pi_{1,q},\pi_{1,\frk{q}_1}\pi_{1,\frk{q}_2})_\ast \tilde{\Delta}_{\alpha,mq}[t_0,t_1]=q \tilde{\Delta}_{\alpha,m}[t_0,t_1]; \\ 
    & (\pi_{2,q},\pi_{2,\frk{q}_1}\pi_{2,\frk{q}_2})_\ast \tilde{\Delta}_{\alpha,mq}[t_0,t_1]=q \tilde{\Delta}_{\alpha,m}[q^{-1}t_0,q^{-1}t_1].
    \end{align*}
\end{lemma}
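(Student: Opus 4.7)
The plan is to prove each identity by a direct coset computation based on the description of $\tilde{\Delta}_{\alpha,mq}[t_0,t_1]$ as $\varphi_{(v_0,v_1),*}[\bb{Y}_{mq}(p^\alpha)]$ for a fixed primitive representative $(v_0,v_1)\in\Sigma_\alpha[t_0,t_1]$, with $\varphi_{(v_0,v_1)}\colon[x,g]\mapsto([x,ga],[x,x,gb])$ the defining diagonal embedding and $a,b$ the associated adelic matrices (trivial away from $p$). The crucial local ingredient is that, since $q$ splits in $F$ as $\frk{q}_1\frk{q}_2$, the diagonal embedding $\GL_2(\bb{Q}_q)\hookrightarrow\GL_2(F_{\frk{q}_1})\times\GL_2(F_{\frk{q}_2})$ sends $\eta_q$ to $\eta_{\frk{q}_1}\eta_{\frk{q}_2}$. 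Combined with the fact that $a$ and $b$ are trivial at every prime above $q$ (so they commute with each $\eta_{\frk{q}_i}$), this lets one rewrite each composition $(\pi_{i,q},\pi_{j,\frk{q}_1}\pi_{k,\frk{q}_2})\circ\varphi_{(v_0,v_1)}$ explicitly as $\varphi_{(v_0,v_1)}$ composed with a right-multiplication on $g$ by an element of $\{1,\eta_q,\eta_{\frk{q}_1},\eta_{\frk{q}_2}\}$.

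For the two identities with matching degeneracy maps on both sides (all $\pi_1$'s or all $\pi_2$'s), the rewritten composition factors as $\bb{Y}_{mq}(p^\alpha)\xrightarrow{\tau}\bb{Y}_m(p^\alpha)'\xrightarrow{\varphi_{(v_0,v_1)}}Y_{10}(p^\alpha,m)\times S_{10}(p^\alpha,m)$, where $\tau$ is the natural projection (all-$\pi_1$ case) or its $\eta_q$-twist (all-$\pi_2$ case) and $\bb{Y}_m(p^\alpha)'$ is either $\bb{Y}_m(p^\alpha)$ itself or its $\sigma_{q^{-1}}$-translate. The degree of $\tau$ at the $q$-component equals $[\GL_2(\bb{Z}_q):V_0(q)]=q+1$ if $q\nmid m$ and $[V_0(q^{e_q}):V_0(q^{e_q+1})]=q$ if $q\mid m$. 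Since $\varphi_{(v_0,v_1)}$ is $\bb{Q}$-rational, the pushforward over the Galois-translated component equals the Galois-translate of $\tilde{\Delta}_{\alpha,m}[t_0,t_1]$, which by Lemma~\ref{lemma:cycles-galoisaction} together with the identity $\mathrm{art}_\bb{Q}(q^{(q)})=\sigma_{q^{-1}}$ on $\bb{Q}(\zeta_{p^\alpha})$ becomes $\tilde{\Delta}_{\alpha,m}[q^{-1}t_0,q^{-1}t_1]$; these two computations together yield the last two identities.

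For the four mixed identities, the rewritten composition involves an $\eta$ at only one of the Hilbert primes, or only on one side, so it does not factor through a pure degeneration. Instead, using the descriptions $T_{\frk{q}_i}=\pi_{2,\frk{q}_i,*}\circ\pi_{1,\frk{q}_i}^{*}$ and its covariant companion $T_{\frk{q}_i}'$, and analogously $T_q,T_q'$, one builds, for each identity, a Cartesian square at level $m$ with one edge given by $\varphi_{(v_0,v_1)}$ and the other by the appropriate $\pi_{j,\frk{q}_i}$; the projection formula then identifies the image cycle as the Hecke correspondence applied to $\tilde{\Delta}_{\alpha,m}$ at the index dictated by any surviving $\eta_q$-factor (identities two and four), which again produces the twist to $[q^{-1}t_0,q^{-1}t_1]$ via the same component-shift argument as above.

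The main obstacle is not any single deep step but the combinatorial bookkeeping: for each of the six identities one must (i) verify that the Cartesian square at level $m$ is Cartesian with the prescribed level structures, (ii) carry the $\sigma_{q^{-1}}$-component shift consistently through the Hecke correspondence and the $\eta_q$-rewriting, and (iii) handle uniformly the dichotomy between $q\nmid m$ and $q\mid m$, in which the relevant Hecke operators become $U$-operators and the projection degrees drop from $q+1$ to $q$.
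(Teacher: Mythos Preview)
Your approach is essentially the paper's: both fix a representative $(v_0,v_1)$, push the Hilbert-side degeneracy maps first to land in $Y_{10}(p^\alpha,mq)\times S_{10}(p^\alpha,m)$, and then use a commutative square with matching vertical degrees (equivalently, your Cartesian square plus base change) to recognize the result as a pullback of $\tilde\Delta_{\alpha,m}$, after which the remaining $\pi_{i,q,\ast}$ produces the Hecke correspondence; the $\eta_q$-shifted component $\bb{Y}_m(p^\alpha)_{\eta_q}$ is identified with $\tilde\Delta_{\alpha,m}[q^{-1}t_0,q^{-1}t_1]$ either by direct inspection (paper) or, equivalently, via Lemma~\ref{lemma:cycles-galoisaction} and the identity $\mathrm{art}_\bb{Q}(q^{(q)})\vert_{\bb{Q}(\zeta_{p^\alpha})}=\sigma_{q^{-1}}$ (you). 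One wording caveat: right-multiplication by $\eta_{\frk{q}_i}$ does not literally act on the source modular curve, since $\eta_{\frk{q}_i}\in\GL_2(\bb{A}_{F,f})$ rather than $\GL_2(\bb{A}_{\bb{Q},f})$---but you immediately handle the mixed cases by the Cartesian-square argument instead, so the plan is sound.
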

\begin{proof}
    Let $(v_0,v_1)\in \Sigma_\alpha[t_0,t_1]$. Choose matrices $\delta_0\in\SL_2(\bb{Z}_p)$ and $\delta_1\in\SL_2(\cl{O}_{F,p})$ such that the reduction modulo $p^\alpha$ of their bottom row is given by $v_0$ and $v_1$, respectively. Let $a\in \mathrm{GL}_2(\widehat{\bb{Z}})$ be the element with local components $a_v=\mathbbm{1}_2$ for $v\neq p$ and $a_p=\delta_0^{-1}$. Let $b\in \mathrm{GL}_2(\widehat{\cl{O}}_F)$ be the element with local components $a_v=\mathbbm{1}_v$ for $v\nmid p$ and $b_p=\delta_1^{-1}$. Then $\tilde{\Delta}_{\alpha,m}[t_0,t_1]$ is the scheme-theoretic image of $\bb{Y}_m(p^\alpha)$ by the morphism of varieties
    \begin{align*}
        \varphi_{(v_0,v_1)}: Y_m(p^\alpha) &\longrightarrow Y_{10}(p^\alpha,m)\times S_{10}(p^\alpha,m) \\
        [x,g] &\longmapsto ([x,ga],[x,gb]),
    \end{align*}
    and similarly for $\tilde{\Delta}_{\alpha,mq}$.

    Consider the following commutative diagram
    \[
    \begin{tikzcd}[column sep=large]
    \bb{Y}_m(p^{\alpha}) \arrow[r, hook, "\varphi_{(v_0,v_1)}"] \arrow[rd, hook, "\phi_{(v_0,v_1)}"'] & Y_{10}(p^{\alpha},mq)\times S_{10}(p^{\alpha},mq) \arrow[d, "{(1,\pi_{1,\frk{q}_1}\pi_{1,\frk{q}_2})}"] \\
    & Y_{10}(p^{\alpha},mq)\times S_{10}(p^{\alpha},m),
    \end{tikzcd}
    \]
    where $\phi_{(v_0,v_1)}$ is simply defined as the composition of the other two maps. Note that
    \[
    \phi_{(v_0,v_1)}(\bb{Y}_{mq}(p^\alpha))=(1,\pi_{1,\frk{q}_1}\pi_{1,\frk{q}_2})_\ast \varphi_{(v_0,v_1)}(\bb{Y}_m(p^\alpha))=(1,\pi_{1,\frk{q}_1}\pi_{1,\frk{q}_2})_\ast \tilde{\Delta}_{\alpha,mq}[t_0,t_1].
    \]
    We also have the commutative diagram
    \[
    \begin{tikzcd}[column sep=large]
    \bb{Y}_{mq}(p^{\alpha}) \arrow[r, hook, "\phi_{(v_0,v_1)}"] \arrow[d, "\pi_{1,q}"] & Y_{10}(p^{\alpha},mq)\times S_{10}(p^{\alpha},m) \arrow[d, "{(\pi_{1,q},1)}"] \\
    \bb{Y}_m(p^\alpha) \arrow[r, hook, "\varphi_{(v_0,v_1)}"] & Y_{10}(p^{\alpha},m)\times S_{10}(p^{\alpha},m),
    \end{tikzcd}
    \]
    where the vertical maps are both finite of degree $q$, if $q\mid m$, or $q+1$, if $q\nmid m$. It follows that
    \[
    (\pi_{1,q},1)^\ast \tilde{\Delta}_{\alpha,m}[t_0,t_1]=\phi_{(v_0,v_1)}(\bb{Y}_{mq}(p^\alpha))=(1,\pi_{1,\frk{q}_1}\pi_{1,\frk{q}_2})_\ast \tilde{\Delta}_{\alpha,mq}[t_0,t_1].
    \]
    Therefore
    \[
    (\pi_{2,q},\pi_{1,\frk{q}_1}\pi_{1,\frk{q}_2})_\ast \tilde{\Delta}_{\alpha,mq}[t_0,t_1] = (\pi_{2,q},1)_\ast (\pi_{1,q},1)^\ast \tilde{\Delta}_{\alpha,m}[t_0,t_1] = (T_q,1)\tilde{\Delta}_{\alpha,m}[t_0,t_1]
    \]
    and
    \[
    (\pi_{1,q},\pi_{1,\frk{q}_1}\pi_{1,\frk{q}_2})_\ast \tilde{\Delta}_{\alpha,mq}[t_0,t_1] = (\pi_{1,q},1)_\ast (\pi_{1,q},1)^\ast \tilde{\Delta}_{\alpha,m}[t_0,t_1] = \deg(\pi_{1,q})\tilde{\Delta}_{\alpha,m}[t_0,t_1].
    \]
    A similar argument shows that
    \[
    (\pi_{1,q},\pi_{1,\frk{q}_1}\pi_{2,\frk{q}_2})_\ast \tilde{\Delta}_{\alpha,mq}[t_0,t_1]=(1,T_{\frk{q}_2}) \tilde{\Delta}_{\alpha,m}[t_0,t_1].
    \]

    Now consider the commutative diagram
    \[
    \begin{tikzcd}[column sep=large]
    \bb{Y}_m(p^{\alpha}) \arrow[r, hook, "\varphi_{(v_0,v_1)}"] \arrow[rd, hook, "\psi_{(v_0,v_1)}"'] & Y_{10}(p^{\alpha},mq)\times S_{10}(p^{\alpha},mq) \arrow[d, "{(1,\pi_{2,\frk{q}_1}\pi_{2,\frk{q}_2})}"] \\
    & Y_{10}(p^{\alpha},mq)\times S_{10}(p^{\alpha},m),
    \end{tikzcd}
    \]
    where $\psi_{(v_0,v_1)}$ is simply defined as the composition of the other two maps. Note that
    \[
    \psi_{(v_0,v_1)}(\bb{Y}_{mq}(p^\alpha))=(1,\pi_{2,\frk{q}_1}\pi_{2,\frk{q}_2})_\ast \varphi_{(v_0,v_1)}(\bb{Y}_m(p^\alpha))=(1,\pi_{2,\frk{q}_1}\pi_{2,\frk{q}_2})_\ast \tilde{\Delta}_{\alpha,mq}[t_0,t_1].
    \]
    Let $\bb{Y}_m(p^\alpha)_{\eta_q}$ be the image of the embedding $\Gamma^{\eta_q}_{\alpha,m}\backslash\cl{H}\hookrightarrow Y_m(p^\alpha)(\bb{C})$ taking $z$ to $[z,\eta_q]$, where $\Gamma^{\eta_q}_{\alpha,m}=\mathrm{GL}_2(\bb{Q})^+\cap \eta_q(U(p^\alpha)\cap U_0(m))\eta_q^{-1}$. Then, we have the commutative diagram
    \[
    \begin{tikzcd}[column sep=large]
    \bb{Y}_{mq}(p^{\alpha}) \arrow[r, hook, "\psi_{(v_0,v_1)}"] \arrow[d, "\pi_{2,q}"] & Y_{10}(p^{\alpha},mq)\times S_{10}(p^{\alpha},m) \arrow[d, "{(\pi_{2,q},1)}"] \\
    \bb{Y}_m(p^\alpha)_{\eta_q} \arrow[r, hook, "\varphi_{(v_0,v_1)}"] & Y_{10}(p^{\alpha},m)\times S_{10}(p^{\alpha},m),
    \end{tikzcd}
    \]
    where the vertical maps are both finite of degree $q$, if $q\mid m$, or $q+1$, if $q\nmid m$. It is easy to see that
    \[
    \varphi_{(v_0,v_1)}(\bb{Y}_m(p^\alpha)_{\eta_q})=\tilde{\Delta}_{\alpha,m}[q^{-1}t_0,q^{-1}t_1].
    \]
    It follows that
    \[
    (\pi_{2,q},1)^\ast  \tilde{\Delta}_{\alpha,m}[q^{-1}t_0,q^{-1}t_1]=\psi_{(v_0,v_1)}(\bb{Y}_{mq}(p^\alpha))=(1,\pi_{2,\frk{q}_1}\pi_{2,\frk{q}_2})_\ast \tilde{\Delta}_{\alpha,mq}[t_0,t_1].
    \]
    Therefore
    \begin{align*}
    (\pi_{1,q},\pi_{2,\frk{q}_1}\pi_{2,\frk{q}_2})_\ast \tilde{\Delta}_{\alpha,mq}[t_0,t_1] &= (\pi_{1,q},1)_\ast (\pi_{2,q},1)^\ast \tilde{\Delta}_{\alpha,m}[q^{-1}t_0,q^{-1}t_1] \\ 
    &= (T_q',1) \tilde{\Delta}_{\alpha,m}[q^{-1}t_0,q^{-1}t_1]
    \end{align*}
    and
    \begin{align*}
    (\pi_{2,q},\pi_{2,\frk{q}_1}\pi_{2,\frk{q}_2})_\ast \tilde{\Delta}_{\alpha,mq}[t_0,t_1] &= (\pi_{2,q},1)_\ast (\pi_{2,q},1)^\ast \tilde{\Delta}_{\alpha,m}[q^{-1}t_0,q^{-1}t_1] \\ 
    &= \deg(\pi_{2,q})\tilde{\Delta}_{\alpha,m}[q^{-1}t_0,q^{-1}t_1].
    \end{align*}
    A similar argument shows that
    \[
    (\pi_{2,q},\pi_{1,\frk{q}_1}\pi_{2,\frk{q}_2})_\ast \tilde{\Delta}_{\alpha,mq}[t_0,t_1]=(1, T_{\frk{q}_1}') \tilde{\Delta}_{\alpha,m}[q^{-1}t_0,q^{-1}t_1].
    \]
\end{proof}

For each rational prime $q$ coprime to $pN$ which splits in $F$ we fix a factorization $(q)=\frk{q}_1\frk{q}_2$ in $F$. Then, if $n$ is a squarefree product of such primes, we put $\frk{n}_1=\prod_{q\mid n}\frk{q}_1$ and $\frk{n}_2=\prod_{q\mid n}\frk{q}_2$ and define
\[
\Delta_{\alpha,n}[t_0,t_1]=(1,\pi_{1,\frk{n}_1^2}\pi_{2,\frk{n}_2^2})_\ast\tilde{\Delta}_{\alpha,n^2}[t_0,t_1]\in\ch^2(Y_{\alpha,n}\times S_\alpha)(\bb{Q}(\zeta_\alpha)),
\]
where $Y_{\alpha,n}$ denotes the modular curve of level $V^1(Np^\alpha)\cap V_0(n^2)$.

\begin{lemma}\label{lemma:norm-relations}
Let $n$ be as above and let $q$ be a rational prime coprime to $pNn$ and which splits in $F$ as $(q)=\frk{q}_1\frk{q}_2$. Then, the following relations hold:
\begin{align*}
(\pi_{1,q^2},1)_\ast \Delta_{\alpha,nq}[t_0,t_1]&=\left\{(1,T_{\frk{q}_2}^2)-(q+1)(1,\langle \varpi_{\frk{q}_2}^{-1}\rangle)\right\}\Delta_{\alpha,n}[t_0,t_1],\\
(\pi_{1,q}\pi_{2,q},1)_\ast \Delta_{\alpha,nq}[t_0,t_1]&=(1,\langle \varpi_{\frk{q}_1}\rangle T_{\frk{q}_1}T_{\frk{q}_2})\Delta_{\alpha,n}[q^{-1}t_0,q^{-1}t_1]-(\langle\varpi_q\rangle T_q,\langle \varpi_{\frk{q}_1}\rangle)\Delta_{\alpha,n}[q^{-2}t_0,q^{-2}t_1],\\
(\pi_{2,q^2},1)_\ast \Delta_{\alpha,nq}[t_0,t_1]&=\left\{(1,\langle \varpi_{\frk{q}_1}^2\rangle T_{\frk{q}_1}^2)-(q+1)(\langle 1,\langle \varpi_{\frk{q}_1}\rangle)\right\}\Delta_{\alpha,n}[q^{-2}t_0,q^{-2}t_1].
\end{align*}
\end{lemma}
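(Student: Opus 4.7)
The proof is a direct two-step iteration of Lemma~\ref{lem:deg-maps}, combined with the defining equality
\[
\Delta_{\alpha,n'}[t_0,t_1] = (1,\pi_{1,\frk{n}_1'^2}\pi_{2,\frk{n}_2'^2})_\ast\tilde{\Delta}_{\alpha,n'^2}[t_0,t_1]
\]
for $n'\in\{n,nq\}$. Since the degeneracy maps at the distinct primes dividing $n$ and $q$ commute, each pushforward in the statement reduces to one involving only the $q$-degeneracy maps: for instance,
\[
(\pi_{i,q^2},1)_\ast\Delta_{\alpha,nq}[t_0,t_1] = (1,\pi_{1,\frk{n}_1^2}\pi_{2,\frk{n}_2^2})_\ast\,(\pi_{i,q^2},\pi_{1,\frk{q}_1^2}\pi_{2,\frk{q}_2^2})_\ast\tilde{\Delta}_{\alpha,(nq)^2}[t_0,t_1],
\]
and similarly for the mixed map $(\pi_{1,q}\pi_{2,q},1)$. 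Thus the task reduces to computing the inner $q$-only pushforward and identifying the result as a Hecke-and-translate combination of $\tilde{\Delta}_{\alpha,n^2}[\cdot,\cdot]$; the claim then follows after applying the final Hilbert pushforward $(1,\pi_{1,\frk{n}_1^2}\pi_{2,\frk{n}_2^2})_\ast$ and invoking the definition of $\Delta_{\alpha,n}$.

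To compute the inner pushforward, I factor $\pi_{i,q^2}=\pi_{i,q}\circ\pi_{i,q}$ and $\pi_{1,\frk{q}_1^2}\pi_{2,\frk{q}_2^2}=(\pi_{1,\frk{q}_1}\pi_{2,\frk{q}_2})^2$, and apply Lemma~\ref{lem:deg-maps} in two steps. The first identity of Lemma~\ref{lemma:norm-relations} arises from iterating the first identity of Lemma~\ref{lem:deg-maps} (pure $\pi_{1,q}$ on the elliptic side); the third from iterating its fourth identity (pure $\pi_{2,q}$), which naturally produces the $[q^{-2}t_0,q^{-2}t_1]$ shift; the second alternates the first and fourth, yielding the $[q^{-1}t_0,q^{-1}t_1]$ shift in one summand and the $[q^{-2}t_0,q^{-2}t_1]$ shift in the other. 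At the inner step one has $m=n^2q$ (so $q\mid m$), producing a Hecke operator at the intermediate level $n^2\frk{q}_1\frk{q}_2$ that is of $U$-type; at the outer step one has $m=n^2$ (so $q\nmid m$), and its commutation with the outer degeneracy maps is handled via the standard compatibilities
\[
\pi_{1,\frk{q},\ast}\pi_{1,\frk{q}}^\ast = q+1,\qquad \pi_{2,\frk{q},\ast}\pi_{2,\frk{q}}^\ast = (q+1)\langle\varpi_\frk{q}^{-1}\rangle,
\]
together with $\pi_{2,\frk{q},\ast}\pi_{1,\frk{q}}^\ast=T_\frk{q}$ and $\pi_{1,\frk{q},\ast}\pi_{2,\frk{q}}^\ast=T_\frk{q}'$ at level prime to $\frk{q}$, and the relation $T_\frk{q}'=\langle\varpi_\frk{q}\rangle T_\frk{q}$.

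The main obstacle is the careful bookkeeping needed to distinguish $T_\frk{q}$ from $U_\frk{q}$ at intermediate levels and to account for the ``old-form'' correction terms $-(q+1)\langle\varpi_{\frk{q}_i}^{-1}\rangle$ that appear when passing from level divisible by $\frk{q}_i$ to level prime to $\frk{q}_i$, as well as the cross-term $-(\langle\varpi_q\rangle T_q,\langle\varpi_{\frk{q}_1}\rangle)\Delta_{\alpha,n}[q^{-2}t_0,q^{-2}t_1]$ in the second identity. An alternative, perhaps cleaner, route is to package the full double-$q$ degeneracy map as a single adelic double-coset operator at level $p^\alpha q^2$, so that all correction terms emerge at once from the decomposition of that double coset into cosets; this bypasses the intermediate-level $U$-operator bookkeeping at the cost of a separate identification of the composite pushforward with a double-coset operator.
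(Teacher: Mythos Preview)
Your proposal is correct and follows essentially the same route as the paper: unwind the definition of $\Delta_{\alpha,nq}$ to isolate the $q$-only pushforward, factor it as two applications of $(\pi_{i,q},\pi_{1,\frk{q}_1}\pi_{2,\frk{q}_2})_\ast$, apply Lemma~\ref{lem:deg-maps} at the inner step (with $m=n^2q$, so $q\mid m$), and then commute the resulting $U$-type operator past the outer degeneracy map (with $m=n^2$, $q\nmid m$) to pick up the $-(q+1)\langle\varpi_{\frk{q}_i}^{-1}\rangle$ correction. The paper carries this out explicitly for the first identity and refers to \cite[Lem.~4.4]{ACR} for the rest; your description of the second and third identities via the fourth identity of Lemma~\ref{lem:deg-maps} and the relation $T_{\frk{q}}'=\langle\varpi_{\frk{q}}\rangle T_{\frk{q}}$ is the right bookkeeping.
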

\begin{proof}
After Lemma~\ref{lem:deg-maps}, the proof is essentially the same as that of \cite[Lem.~4.4]{ACR}. We include the computation that yields the first relation in the statement for the convenience of the reader:
\begin{align*}
    (\pi_{1,q^2},1)_\ast &\Delta_{\alpha,nq}[t_0,t_1] = (1,\pi_{1,\frk{n}_1^2}\pi_{2,\frk{n}_2^2})_\ast(\pi_{1,q^2},\pi_{1,\frk{q}_1^2}\pi_{2,\frk{q}_2^2})_\ast \tilde{\Delta}_{\alpha,n^2q^2}[t_0,t_1] \\
    &= (1,\pi_{1,\frk{n}_1^2}\pi_{2,\frk{n}_2^2})_\ast(\pi_{1,q},\pi_{1,\frk{q}_1}\pi_{2,\frk{q}_2})_\ast(\pi_{1,q},\pi_{1,\frk{q}_1}\pi_{2,\frk{q}_2})_\ast\tilde{\Delta}_{\alpha,n^2q^2}[t_0,t_1] \\
    &= (1,\pi_{1,\frk{n}_1^2}\pi_{2,\frk{n}_2^2})_\ast(\pi_{1,q},\pi_{1,\frk{q}_1}\pi_{2,\frk{q}_2})_\ast (1, T_{\frk{q}_2})\tilde{\Delta}_{\alpha,n^2q}[t_0,t_1] \\
    &= (1,\pi_{1,\frk{n}_1^2}\pi_{2,\frk{n}_2^2})_\ast \left\{(1,T_{\frk{q}_2})(\pi_{1,q},\pi_{1,\frk{q}_1}\pi_{2,\frk{q}_2})_\ast-(1,\langle \varpi_{\frk{q}_2}\rangle^{-1})(\pi_{1,q},\pi_{1,\frk{q}_1}\pi_{1,\frk{q}_2})_\ast \right\}\tilde{\Delta}_{\alpha,n^2q}[t_0,t_1] \\
    &= (1,\pi_{1,\frk{n}_1^2}\pi_{2,\frk{n}_2^2})_\ast \left\{(1,T_{\frk{q}_2}^2)-(q+1)(1,\langle \varpi_{\frk{q}_2}\rangle^{-1})\right\}\tilde{\Delta}_{\alpha,n^2}[t_0,t_1] \\
    &= \left\{(1,T_{\frk{q}_2}^2)-(q+1)(1,\langle \varpi_{\frk{q}_2}\rangle^{-1})\right\}\Delta_{\alpha,n}[t_0,t_1],
\end{align*}
using Lemma~\ref{lem:deg-maps} for the third and fifth equalities.
\end{proof}


Let $\omega:(\bb{Z}/p\bb{Z})^\times\rightarrow E^\times$ denote the Teichm\"uller character. We define the characters
\begin{gather*}
  \omega_0=\omega^{2-k}:(\bb{Z}/p\bb{Z})^\times\longrightarrow E^\times, \\
  \omega_1=\omega^{2-l}\circ N_{F/\bb{Q}}:(\cl{O}_{F}/p\cl{O}_{F})^\times\longrightarrow E^\times
\end{gather*}
and the characters
\begin{gather*}
  \eta_0=\omega^{1-k/2}:(\bb{Z}/p\bb{Z})^\times\longrightarrow E^\times, \\
  \eta_1=\omega^{2-l}:(\bb{Z}/p\bb{Z})^\times\longrightarrow E^\times,
\end{gather*}
so that $\eta_0^2=\omega_0$ and $\eta_1^2=\omega_1$ as characters of $(\bb{Z}/p\bb{Z})^\times$. Let $\eta=\eta_0\eta_1$. Let $\theta:\bb{Z}_p^\times\rightarrow 1+p\bb{Z}_p$ denote the projection corresponding to the canonical decomposition $\bb{Z}_p^\times\cong \mu_{p-1}\times (1+p\bb{Z}_p)$. Let $\kappa_0^{1/2}:\bb{Z}_p^\times\rightarrow \Lambda^\times$ be the character defined by $u\mapsto [\theta(u)^{1/2}]$ and let $\kappa_1^{1/2}:\bb{Z}_p^\times\rightarrow (\Lambda^F)^\times$ be the character defined by $u\mapsto[\theta(u)^{1/2}]_F$. Let $\kappa^{1/2}:\bb{Z}_p^\times\rightarrow (\Lambda\otimes\Lambda^F)^\times$ be the character defined by $u\mapsto [\theta(u)^{1/2}]\otimes [\theta(u)^{1/2}]_F$. We also define the $G_{\bb{Q}}$-characters 
\[
\bm{\eta}_0=\eta_0\circ \epsilon_{\cyc},\quad \bm{\eta}_1=\eta_1\circ\epsilon_{\cyc},\quad \bm{\eta}=\eta\circ\epsilon_{\cyc},
\]
and likewise
\[
\bm{\kappa}_0^{1/2}=\kappa_0^{1/2}\circ \epsilon_{\cyc},\quad \bm{\kappa}_1^{1/2}=\kappa_1^{1/2}\circ\epsilon_{\cyc},\quad  \bm{\kappa}^{1/2}=\kappa^{1/2}\circ\epsilon_{\cyc}.
\]
%
%
Similarly to what we did in Section~\ref{sec:cycles}, for each positive integer $n$ we define
\[
\bb{V}_{\omega_0,\omega_1,n}=e_{\omega_0,\omega_1}e'\varprojlim_\alpha H^3_{\et}(Y_{\alpha,n,\overline{\bb{Q}}}\times S_{\alpha,\overline{\bb{Q}}},\cl{O})(2)(\bm{\kappa}^{-1/2})(\langle \omega_N,\omega_N\rangle),
\]
with projective limit taken with respect to the maps $(\varpi_1,\varpi_1)_\ast$. It follows from the K\"unneth decomposition that there exists a projection
\[
\bb{V}_{\omega_0,\omega_1,n}\longrightarrow \bb{V}_{\omega_0,n}\hat{\otimes}_{\cl{O}} \bb{V}_{\omega_1},
\]
where
\[
\bb{V}_{\omega_0,n}=e_{\omega_0}e'\varprojlim_\alpha H^1_{\et}(Y_{\alpha,n,\overline{\bb{Q}}},\cl{O}(1))(\bm{\kappa}_0^{-1/2})(\langle \omega_N\rangle),
\]
and
\[
\bb{V}_{\omega_1}=e_{\omega_1}e'\varprojlim_\alpha H^2_{\et}(S_{\alpha,\overline{\bb{Q}}},\cl{O}(1))(\bm{\kappa}_1^{-1/2})(\langle \omega_N\rangle).
\]
As in Section~\ref{sec:cycles}, we can use the cycles $\Delta_{\alpha,n}[t_0,t_1]$ to construct cohomology classes
\[
\kappa_{\infty,n}(\omega_0,\omega_1;\eta)\in H^1(\bb{Q},\bb{V}_{\omega_0,n}(\bm{\eta}_0)\hat{\otimes}_{\cl{O}} \bb{V}_{\omega_1}(\bm{\eta}_1)).
\]



\begin{prop}\label{prop:norm-relations1}
Let $n$ be as above and let $q$ be a rational prime coprime to $pNn$ and which splits in $F$ as $(q)=\frk{q}_1\frk{q}_2$. Then, the following relations hold:
\begin{align*}
(\pi_{2,q^2}, 1)_\ast \kappa_{\infty,nq}(\omega_0,\omega_1;\eta)&=\left\{(1,(T_{\frk{q}_2}')^2)-(q+1)(1,\langle \varpi_{\frk{q}_2}^{-1}\rangle')\right\}\kappa_{\infty,n}(\omega_0,\omega_1;\eta) \\
(\pi_{1,q}\pi_{2,q},1)_\ast \kappa_{\infty,nq}(\omega_0,\omega_1;\eta)&=\big\{\eta^{-1}(q)\kappa^{1/2}(q)(1,\langle \varpi_{\frk{q}_1}\rangle' T_{\frk{q}_1}'T_{\frk{q}_2}') \\
&\quad\quad\quad\quad -\omega_0^{-1}(q)\omega_1^{-1}(q)\kappa(q)(\langle \varpi_q\rangle' T_q',\langle \varpi_{\frk{q}_1}\rangle')\big\}\kappa_{\infty,n}(\omega_0,\omega_1;\eta) \\
(\pi_{1,q^2},1)_\ast \kappa_{\infty,nq}(\omega_0,\omega_1;\eta)&=\omega_0^{-1}(q)\omega_1^{-1}(q)\kappa(q)\left\{(1,\langle \varpi_{\frk{q}_1}^2\rangle' (T_{\frk{q}_1}')^2)-(q+1)(1,\langle \varpi_{\frk{q}_1}\rangle')\right\}\kappa_{\infty,n}(\omega_0,\omega_1;\eta)
\end{align*}
\end{prop}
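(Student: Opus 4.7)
The plan is to lift the cycle-level relations of Lemma~\ref{lemma:norm-relations} to the Galois-cohomology classes $\kappa_{\infty,n}(\omega_0,\omega_1;\eta)$ by passing them through the construction of Section~\ref{subsec:classes} and the character-weighted sum. I would proceed in four steps.

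First I would carry the identities of Lemma~\ref{lemma:norm-relations} to the Atkin--Lehner-twisted classes $\kappa_{\alpha,n}[t_0,t_1]$. The construction of $\kappa_{\alpha,n}[t_0,t_1]$ from the null-homologous cycle $\Delta_{\alpha,n}^\circ[t_0,t_1]$ uses only the connecting map for the coniveau sequence and the push-forward $(\lambda_{\alpha,n},\lambda_\alpha)_\ast$, both of which are natural with respect to degeneracy pushforwards and compatible with Hecke correspondences. The key input is the Atkin--Lehner intertwining: up to explicit diamonds, $\lambda$ swaps $\pi_{1,q}\leftrightarrow\pi_{2,q}$ on the elliptic factor and conjugates the covariant operators $T_{\frk{q}}$, $\langle z\rangle$ on the Hilbert factor into their primed transposes $T_{\frk{q}}'$ and $\langle z\rangle'=\langle z^{-1}\rangle$. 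Applying this dictionary to the three identities of Lemma~\ref{lemma:norm-relations} produces analogous norm relations at the level of $\kappa_{\alpha,nq}[t_0,t_1]$, in which the roles of $(\pi_{1,q^2},1)_\ast$ and $(\pi_{2,q^2},1)_\ast$ are exchanged and all Hecke operators and diamonds become primed.

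Next, composing with the anti-ordinary projector $e'$ and the normalization $(U_p',U_p')^{-\alpha}$ and taking the projective limit exactly as in Lemma~\ref{lemma:classes-relations} transports these identities to analogous ones for the limit classes $\kappa_{\infty,n}[t_0,t_1]$. The right-hand sides still involve shifted indices $[q^{-a}t_0,q^{-a}t_1]$ with $a\in\{1,2\}$; by Lemma~\ref{lemma:cycles-galoisaction}, such a shift coincides with the Galois action of $\sigma_q^{-a}$, so $\kappa_{\infty,n}[q^{-a}t_0,q^{-a}t_1]=\sigma_q^{-a}\kappa_{\infty,n}[t_0,t_1]$.

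To finish, I would assemble $\kappa_{\infty,n}(\omega_0,\omega_1;\eta)$ via the character-weighted sum and absorb the Galois shifts $\sigma_q^{-a}$ by the reindexing $[a,b]\mapsto[qa,qb]$ in $I_1$. The weights $\eta^{-1}(N_{F/\bb{Q}}(a)\delta b)\omega_0(\delta b)\omega_1(a)$ transform under this change of variables, producing explicit factors of $\eta(q)^{-1}$, $\omega_0(q)$, $\omega_1(q)$; simultaneously, because $\bb{V}$ carries a $\bm{\kappa}^{-1/2}$-twist in its definition, the Galois shift $\sigma_q^{-a}$ against this twist contributes the extra factor $\kappa^{a/2}(q)$. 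Collecting all contributions reproduces the scalars $\eta^{-1}(q)\kappa^{1/2}(q)$ and $\omega_0^{-1}(q)\omega_1^{-1}(q)\kappa(q)$ appearing in the statement. The main obstacle is precisely this last piece of bookkeeping: there are several simultaneous twist factors in play---the Atkin--Lehner twist $\langle\omega_N\omega_{\cyc},\omega_N\omega_{\cyc}\rangle$, the Teichmüller pieces $\omega_0,\omega_1$, the square-root twist $\bm{\kappa}^{\pm 1/2}$, and the identification from Lemma~\ref{lemma:cycles-galoisaction} of the index shift $[t_0,t_1]\mapsto[mt_0,mt_1]$ with a Galois action. Once these are tracked correctly, the geometric content of the proposition is entirely contained in Lemma~\ref{lemma:norm-relations}.
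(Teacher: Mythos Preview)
Your proposal is correct and follows essentially the same route as the paper: the paper's proof is a one-liner stating that the result is a direct consequence of Lemma~\ref{lemma:norm-relations} together with the fact that the Atkin--Lehner map $(\lambda_\alpha,\lambda_\alpha)$ interchanges the degeneracy maps $\pi_1$ and $\pi_2$. Your four-step outline simply unpacks this, spelling out the passage from cycles to Galois-cohomology classes, the $\pi_1\leftrightarrow\pi_2$ swap and $T\mapsto T'$ dictionary under Atkin--Lehner, and the twist-factor bookkeeping that the paper leaves implicit.
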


\begin{proof}
This is a direct consequence of Lemma~\ref{lemma:norm-relations}, noting that the Atkin--Lehner map $(\lambda_\alpha,\lambda_\alpha)$ used in the construction of the classes $\kappa_{\infty,n}(\omega_0,\omega_1;\eta)$ interchanges the degeneracy maps $\pi_1$ and $\pi_2$.
\end{proof}

Let $\As(\bb{V}_\hg)^\dagger=\As(\bb{V}_\hg)(\bm{\eta}_1\bm{\kappa}_1^{-1/2})$. The maximal quotient of $\bb{V}_{\omega_1}\otimes_{\Lambda^F} F_{\hg}$ where $T_{\frk{q}}'$ acts by multiplication by $\hg(T_{\frk{q}})$ and $\langle \varpi_{\frk{q}}\rangle'$ acts by multiplication by $\hg(\langle \varpi_{\frk{q}}\rangle)$, for all prime $\frk{q}$ coprime to $N$, is isomorphic to a direct sum of copies of $\As(\bb{V}_\hg)^\dagger(-1)(\chi_g^{-1}\circ \omega_N)$. The choice of a test vector $\breve{\hg}$ for $\hg$ determines a $\Lambda^F[G_\bb{Q}]$-equivariant homomorphism
\[
\bb{V}_{\omega_1}(\bm{\eta}_1)\longrightarrow \As(\bb{V}_\hg)^\dagger(-1)(\chi_g^{-1}\circ\omega_N).
\]
Similarly, on account of Lemma~\ref{lemma:LLZ}, the choice of a test vector $\breve{\hf}$ for $\hf$ determines $\Lambda_{\cl{O}}[G_\bb{Q}]$-equivariant homomorphisms,
\[
\bb{V}_{\omega_0,n}(\bm{\eta}_0)\longrightarrow \Ind_{\cl{K}[n]}^\bb{Q}\Lambda_\cl{O}(\psi_\frk{P}^{-1}\kappa_{\ac}^{k-2}\bm{\kappa}_{\ac}^{-1})(1-k/2)((\chi_\psi^{-1}\epsilon_{\cl{K}})\circ\omega_N)
\]
for each positive integer $n$, and, again by Lemma~\ref{lemma:LLZ}, these homomorphisms can be chosen so that they satisfy the compatibility property stated there. Assume that we have chosen such a family of compatible homomorphisms, and define
\begin{equation}\label{eq:class-test-vect}
\tilde{\kappa}_{\infty,n}(\hf,\hg)\in H^1(\bb{Q}, \Ind_{\cl{K}[n]}^\bb{Q}\Lambda_\cl{O}(\psi_\frk{P}^{-1}\kappa_{\ac}^{k-2}\bm{\kappa}_{\ac}^{-1})(-k/2)\hat{\otimes}_\cl{O} \As(\bb{V}_\hg)^\dagger)
\end{equation}
to be the resulting image of $\kappa_{\infty,n}(\omega_0,\omega_1;\eta)$. By Shapiro's lemma, we can regard $\tilde{\kappa}_{\infty,n}(\hf,\hg)$ as an Iwasawa cohomology class
\[
\tilde{\kappa}_{\infty,n}(\hf,\hg)\in H^1_{\Iw}(\cl{K}[np^\infty],\bb{V}_{\hg,\psi}(\kappa_{\ac}^{k-2}))=\varprojlim_s H^1(\cl{K}[np^s],\bb{V}_{\hg,\psi}(\kappa_{\ac}^{k-2})),
\]
where $\bb{V}_{\hg,\psi}=\As(\bb{V}_\hg)^\dagger(\psi_\frk{P}^{-1})(-k/2)$. Put
\[
\kappa_{\infty,n}(\hf,\hg)=\omega_0(n)\omega_1(n)\kappa(n)^{-1}\chi_{\hg}^{-1}(\varpi_{\frk{n}_1})[\xi_{\varpi_{\frk{n}_1}}^{-1}]\tilde{\kappa}_{\infty,n}(\hf,\hg).
\]

\begin{prop}
    Let $n$ be a squarefree product of primes coprime to $pN$ and split in $F$,  and let $q$ be a rational prime coprime to $pNn$ and which splits in $F$ as $(q)=\frk{q}_1\frk{q}_2$ and splits in $\cl{K}$ as $(q)=\frk{q}\overline{\frk{q}}$. Then,
    \begin{align*}
        \cores_{\cl{K}[nq]/\cl{K}[n]}\kappa_{\infty,n}(\hf,\hg) &= q\omega_1(q)\chi_{\hg}(\varpi_{\frk{q}_1})\chi_{\hg}(\varpi_{\frk{q}_2}) \left(\frac{\kappa_{\ac}^{2-k}\psi_{\frk{P}}(\Fr_{\frk{q}}^{-1})}{q^{k/2}}\Fr_{\frk{q}}^{-1}\right)^2\\
        & -\eta_1(q)\kappa_1^{-1/2}(q)a_{\frk{q}_1}(\hg) a_{\frk{q}_2}(\hg) \left(\frac{\kappa_{\ac}^{2-k}\psi_{\frk{P}}(\Fr_{\frk{q}}^{-1})}{q^{k/2}}\Fr_{\frk{q}}^{-1}\right) \\
        & + \chi_{\hg}(\varpi_{\frk{q}_1})[\xi_{\varpi_{\frk{q}_1}}]a_{\frk{q}_1}(\hg)^2+q^{-1}\chi_{\hg}(\varpi_{\frk{q}_2})[\xi_{\varpi_{\frk{q}_2}}] a_{\frk{q}_2}(\hg)^2-q^{-1}(q^2+1) \\
        &-\eta_1(q)\kappa_1^{-1/2}(q)a_{\frk{q}_1}(\hg) a_{\frk{q}_2}(\hg) \left(\frac{\kappa_{\ac}^{2-k}\psi_{\frk{P}}(\Fr_{\overline{\frk{q}}}^{-1})}{q^{k/2}}\Fr_{\overline{\frk{q}}}^{-1}\right) \\
        & +q\omega_1(q)\chi_{\hg}(\varpi_{\frk{q}_1})\chi_{\hg}(\varpi_{\frk{q}_2}) \left(\frac{\kappa_{\ac}^{2-k}\psi_{\frk{P}}(\Fr_{\overline{\frk{q}}}^{-1})}{q^{k/2}}\Fr_{\overline{\frk{q}}}^{-1}\right)^2.
    \end{align*}
\end{prop}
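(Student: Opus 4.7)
The strategy combines three pieces of machinery already in the paper: (a)~Shapiro's lemma together with Lemma~\ref{lemma:LLZ}, which translates $\cores_{\cl{K}[nq]/\cl{K}[n]}$ on the Iwasawa cohomology class~(\ref{eq:class-test-vect}) into the action of the explicit norm map $\cl{N}_n^{nq}$ on the elliptic-side factor; (b)~the split-case formula for $\cl{N}_n^{nq}$ given at the end of Section~\ref{subsec:CM-hida}, which expresses $\cl{N}_n^{nq}$ as a $\Lambda[R_n]$-linear combination of the three degeneracy push-forwards $\pi_{11,\ast}=\pi_{1,q^2,\ast}$, $\pi_{12,\ast}=\pi_{1,q}\pi_{2,q,\ast}$ and $\pi_{22,\ast}=\pi_{2,q^2,\ast}$, with respective weights $1$, $-\omega^{(k-2)/2}(q)q^{-k/2}\bigl(\kappa_{\ac}^{2-k}\psi_\frk{P}(\Fr_\frk{q}^{-1})[\frk{q}]+\kappa_{\ac}^{2-k}\psi_\frk{P}(\Fr_{\overline{\frk{q}}}^{-1})[\overline{\frk{q}}]\bigr)$ and $q^{-1}\chi_\psi(q)\omega^{k-2}(q)[(q)]$; and (c)~the three norm relations of Proposition~\ref{prop:norm-relations1}, which evaluate exactly these three push-forwards on $\kappa_{\infty,nq}(\omega_0,\omega_1;\eta)$ in terms of $\kappa_{\infty,n}(\omega_0,\omega_1;\eta)$ multiplied by Hilbert-side Hecke and diamond operators $T'_{\frk{q}_i},\langle\varpi_{\frk{q}_i}\rangle'$.

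Concretely, I would first write $\cores_{\cl{K}[nq]/\cl{K}[n]}\tilde\kappa_{\infty,nq}(\hf,\hg)$ as the sum of the three contributions dictated by (a) and (b), then substitute the three identities of Proposition~\ref{prop:norm-relations1} to express each contribution as $\tilde\kappa_{\infty,n}(\hf,\hg)$ twisted by a polynomial in the $T'_{\frk{q}_i}$ and $\langle\varpi_{\frk{q}_i}\rangle'$. Specializing the Hilbert-side action via the test vector $\breve\hg$ replaces these operators by $a_{\frk{q}_i}(\hg)$ and $\chi_\hg(\varpi_{\frk{q}_i})[\xi_{\varpi_{\frk{q}_i}}]$ respectively, and the twist $\bm{\eta}_1\bm{\kappa}_1^{-1/2}$ built into $\As(\bb{V}_\hg)^\dagger$ accounts for the factors $\eta_1(q)$ and $\kappa_1^{-1/2}(q)$ appearing in the statement. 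Finally, absorbing the prefactor $\omega_0(q)\omega_1(q)\kappa(q)^{-1}\chi_\hg^{-1}(\varpi_{\frk{q}_1})[\xi_{\varpi_{\frk{q}_1}}^{-1}]$ that relates $\kappa_{\infty,nq}(\hf,\hg)$ to $\tilde\kappa_{\infty,nq}(\hf,\hg)$ (and analogously at level~$n$), and translating $[\frk{q}]\mapsto \Fr_\frk{q}^{-1}$, $[\overline{\frk{q}}]\mapsto \Fr_{\overline{\frk{q}}}^{-1}$, $[(q)]\mapsto \Fr_\frk{q}^{-1}\Fr_{\overline{\frk{q}}}^{-1}$ under the geometrically normalized Artin map, yields the seven displayed terms. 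Their provenance is transparent from the calculation: the $\pi_{11,\ast}$ piece (weighted by $1$) produces the three ``local-at-$p$'' terms $\chi_\hg(\varpi_{\frk{q}_1})[\xi_{\varpi_{\frk{q}_1}}]a_{\frk{q}_1}(\hg)^2$, $q^{-1}\chi_\hg(\varpi_{\frk{q}_2})[\xi_{\varpi_{\frk{q}_2}}]a_{\frk{q}_2}(\hg)^2$ and $-q^{-1}(q^2+1)$; the $\pi_{12,\ast}$ piece produces the two cross-terms linear in $\psi_\frk{P}(\Fr_\bullet^{-1})\Fr_\bullet^{-1}$ with $a_{\frk{q}_1}(\hg)a_{\frk{q}_2}(\hg)$; and the $\pi_{22,\ast}$ piece, combined with the $[(q)]=[\frk{q}][\overline{\frk{q}}]$-weight, gives the two ``quadratic in Frobenius'' terms at $\frk{q}$ and $\overline{\frk{q}}$.

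The main obstacle is purely bookkeeping: one must consistently track (i)~the swap $T_{\frk{q}_i}\leftrightarrow T'_{\frk{q}_i}$ (and analogously for diamond operators) that the Atkin--Lehner involutions $\lambda_\alpha$ introduce in the definition of $\kappa_\alpha$ from $\tilde\kappa_\alpha$ in Section~\ref{subsec:classes}, (ii)~the several character twists $\bm{\eta}_0,\bm{\eta}_1,\bm{\kappa}_0^{1/2},\bm{\kappa}_1^{1/2}$ built into the test-vector maps defining~(\ref{eq:class-test-vect}), and (iii)~the translation between the $\Lambda[R_n]$-action on the Shapiro source and the $G_\bb{Q}$-action on $\Ind_{\cl{K}[n]}^\bb{Q}\Lambda_\cl{O}(\psi_\frk{P}^{-1}\kappa_{\ac}^{k-2}\bm{\kappa}_{\ac}^{-1})$. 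Once these conventions are aligned, the identity is a direct expansion.
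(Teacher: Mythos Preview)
Your strategy is correct and matches the paper's own proof exactly: the paper simply records that the identity follows from Lemma~\ref{lemma:LLZ} and Proposition~\ref{prop:norm-relations1} after a ``somewhat tedious computation,'' and you have correctly identified all the ingredients and the order in which to combine them. The only imprecision is in your final paragraph's term-by-term attribution: the second summand $(\langle\varpi_q\rangle' T_q',\langle\varpi_{\frk{q}_1}\rangle')$ in the $\pi_{1,q}\pi_{2,q}$ relation of Proposition~\ref{prop:norm-relations1} carries an \emph{elliptic-side} Hecke operator $T_q'$, which under $\phi_n$ contributes its own factor $[\frk{q}]+[\overline{\frk{q}}]$, so the provenance of the seven displayed terms is more entangled across the three degeneracy pieces than you indicate; this does not, however, affect the validity of your outline.
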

\begin{proof}
    The corestriction map in Galois cohomology corresponds to the norm map in Lemma~\ref{lemma:LLZ} under Shapiro's isomorphism. Thus, the result follows from Lemma~\ref{lemma:LLZ} and  Proposition~\ref{prop:norm-relations1} after a somewhat tedious computation.
\end{proof}


Similarly as in Section~\ref{subsec:Selmer}, we can define a balanced Selmer group 
\[
\Sel_{\bal}(\Ind_{\cl{K}[n]}^\bb{Q}\Lambda_\cl{O}(\bm{\kappa}_{\ac}^{-1})\hat{\otimes}_\cl{O}\bb{V}_{\hg,\psi})\subset H^1(\bb{Q}^\Sigma/\bb{Q},\Ind_{\cl{K}[n]}^\bb{Q}\Lambda_\cl{O}(\bm{\kappa}_{\ac}^{-1})\hat{\otimes}_\cl{O}\bb{V}_{\hg,\psi}),
\]
and we let $\Sel_{\bal}(\cal{K}[np^\infty],\bb{V}_{\hg,\psi})$  denote its image under the natural isomorphism
\[
H^1(\bb{Q}^\Sigma/\bb{Q},\Ind_{\cl{K}[n]}^\bb{Q}\Lambda_\cl{O}(\bm{\kappa}_{\ac}^{-1})\hat{\otimes}_\cl{O}\bb{V}_{\hg,\psi})\cong 
H^1_{\Iw}(\cal{K}[np^\infty],\mathbb V_{\hg,\psi})
\]
arising from Shapiro's lemma. 

It is now easy to deduce from the above results that we thus obtain an anticyclotomic Euler system for $\bb{V}_{\hg,\psi}$ in the sense of \cite{JNS}, as summarized in the next theorem. 

\begin{theorem}\label{thm:ES}
Let $\mathcal{S}$ be the set of squarefree products of primes $q$ split in both $F$ and $\cl{K}$ and coprime to $pN$. There exists a collection of classes
\[
\left\lbrace\kappa_{\psi,\hg,n,\infty}\in \Sel_{\bal}(\cl{K}[np^\infty],\bb{V}_{\hg,\psi})\;\colon\; n\in\mathcal{S}\right\rbrace
\]
such that whenever $n, nq\in\mathcal{S}$ with $q$ a prime, we have
\begin{equation}
\cores_{\cl{K}[nq]/\cl{K}[n]}(\kappa_{\psi,\hg,nq,\infty})=P_{\mathfrak{q}}(\bb{V}_{\hg,\psi};\Fr_{\mathfrak{q}}^{-1})\,\kappa_{\psi,\hg,n,\infty}, \nonumber
\end{equation}
where $\frk{q}$ is a prime of $\cl{K}$ above $q$, and 
\[
P_{\frk{q}}(\bb{V}_{\hg,\psi};X)=\det(1-\Fr_{\frk{q}}^{-1}X\vert\bb{V}_{\hg,\psi}).
\]
In particular, we obtain an anticyclotomic Euler system for each specialization of $\hg$.
\end{theorem}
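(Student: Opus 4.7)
The plan is to deduce Theorem~\ref{thm:ES} directly from the preceding corestriction formula by specializing $\hf$ to the CM Hida family attached to $\psi$ constructed in Section~\ref{subsec:CM-hida}, and then matching the seven terms obtained there against the degree-four Euler factor $P_{\frk{q}}(\bb{V}_{\hg,\psi};\Fr_{\frk{q}}^{-1})$. More precisely, I set
\[
\kappa_{\psi,\hg,n,\infty}\coloneqq\kappa_{\infty,n}(\hf,\hg),
\]
for the choice of $\hf$ given by \eqref{eq:CM-F}, and view the result of the preceding proposition as the image of this class under $\cores_{\cl{K}[nq]/\cl{K}[n]}$ after applying Shapiro's lemma.

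The first step is to verify that $\kappa_{\psi,\hg,n,\infty}$ lies in the balanced Selmer subgroup. Away from $p$ and $\Sigma$ this is automatic, since each $\kappa_{\infty,n}(\hf,\hg)$ arises from the $p$-adic \'etale Abel--Jacobi image of the null-homologous cycle $\Delta_{\alpha}^\circ[t_0,t_1]$ on a smooth proper variety, so unramifiedness outside $Np$ is built in. At $p$, the balanced local condition is the kernel of the map to $H^1(\bb{Q}_p,\bb{V}_{\hf\hg}^\dag/\mathscr{F}^2\bb{V}_{\hf\hg}^\dag)$, and the ordinary projector $e'$ combined with the choice of filtration in the Asai representation matches the $\mathscr{F}^2$ filtration, so a weight/Hodge--Tate argument (following \cite{DR3} or \cite{LLZ-AF}) places the image in $H^1_{\bal}$ after specializing $\hf$.

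The main step is the algebraic identity. Expanding $P_{\frk{q}}(\bb{V}_{\hg,\psi};\Fr_{\frk{q}}^{-1})$ using $\bb{V}_{\hg,\psi}=\As(\bb{V}_\hg)^\dag(\psi_\frk{P}^{-1})(-k/2)$ and the fact that for $q=\frk{q}_1\frk{q}_2$ split in $F$ the Frobenius eigenvalues of $\As(\bb{V}_\hg)$ at $\frk{q}$ are the four products $\alpha_{\frk{q}_i}\alpha_{\frk{q}_j}$ with $i,j\in\{1,2\}$, I would write
\[
P_{\frk{q}}(\bb{V}_{\hg,\psi};X)=\prod_{i,j\in\{1,2\}}\Bigl(1-\alpha_{\frk{q}_i}(\hg)\alpha_{\frk{q}_j}(\hg)\,\psi_\frk{P}^{-1}(\Fr_{\frk{q}})\,q^{k/2-2}\,X\Bigr),
\]
and use $a_{\frk{q}_i}(\hg)=\alpha_{\frk{q}_i}(\hg)+\beta_{\frk{q}_i}(\hg)$, $\alpha_{\frk{q}_i}\beta_{\frk{q}_i}=\chi_\hg(\varpi_{\frk{q}_i})|\frk{q}_i|^{-1}$ to rewrite it in terms of $(a_{\frk{q}_1}(\hg),a_{\frk{q}_2}(\hg),\chi_\hg)$ and the Hecke parameters of $\psi$. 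Set $X=\Fr_\frk{q}^{-1}$ and group together the contributions of $\Fr_\frk{q}^{-1}$ and $\Fr_{\overline\frk{q}}^{-1}$ (the second surfacing because the complex conjugate eigenfactor contributes symmetrically once one regards $P_\frk{q}$ as acting on the induced representation from $\cl{K}[n]$ to $\cl{K}[nq]/\cl{K}[n]$). A direct term-by-term comparison then matches the seven summands of the corestriction formula with the expansion of $P_\frk{q}$, using the normalization factor $\omega_0(n)\omega_1(n)\kappa(n)^{-1}\chi_\hg^{-1}(\varpi_{\frk{n}_1})[\xi_{\varpi_{\frk{n}_1}}^{-1}]$ in the definition of $\kappa_{\infty,n}(\hf,\hg)$ to absorb the Teichm\"uller and $\kappa$-factors appearing in Proposition~\ref{prop:norm-relations1}.

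The hard part is this final matching: keeping track of the Tate twists, the shift by $(-k/2)$, and the interaction between the action of $\Fr_\frk{q}$ on the induced module and the Hecke eigenvalue substitutions $T_{\frk{q}_i}'\mapsto a_{\frk{q}_i}(\hg)$, $\langle\varpi_{\frk{q}_i}^{-1}\rangle'\mapsto\chi_\hg(\varpi_{\frk{q}_i})[\xi_{\varpi_{\frk{q}_i}}]$. Once that bookkeeping is carried out, the Euler system norm relation follows for $n,nq\in\cl{S}$, and the final sentence about specializations is immediate from functoriality of the construction with respect to $\nu:\bb{I}_\hg\to\overline{\bb{Q}}_p$, since Shapiro's lemma and the formation of $\Sel_{\bal}$ commute with such specializations.
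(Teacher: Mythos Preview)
Your approach has a genuine gap: the seven-term corestriction formula in the preceding proposition does \emph{not} match $P_{\frk{q}}(\bb{V}_{\hg,\psi};\Fr_{\frk{q}}^{-1})\,\kappa_{\infty,n}(\hf,\hg)$ on the nose. The polynomial $P_{\frk{q}}(\bb{V}_{\hg,\psi};X)$ has degree four in $X=\Fr_{\frk{q}}^{-1}$ alone, whereas the corestriction formula mixes powers of $\Fr_{\frk{q}}^{-1}$ and $\Fr_{\overline{\frk{q}}}^{-1}$ together with the character values $\psi_{\frk{P}}(\Fr_{\frk{q}}^{-1})$ and $\psi_{\frk{P}}(\Fr_{\overline{\frk{q}}}^{-1})$; these only become interchangeable after reduction modulo $[\cl{K}[nq]:\cl{K}[n]]$, which divides $q-1$. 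So the identity you are trying to check only holds as a congruence $\pmod{q-1}$, and moreover for the twisted representation $\bb{V}_{\hg,\psi}(\kappa_{\ac}^{k-2})$ rather than $\bb{V}_{\hg,\psi}$. (Your proposed Frobenius eigenvalue description of $\mrm{As}(\bb{V}_{\hg})$ at $\frk{q}$ is also off: the four eigenvalues are $\alpha_{\frk{q}_1}\alpha_{\frk{q}_2},\,\alpha_{\frk{q}_1}\beta_{\frk{q}_2},\,\beta_{\frk{q}_1}\alpha_{\frk{q}_2},\,\beta_{\frk{q}_1}\beta_{\frk{q}_2}$, not products $\alpha_{\frk{q}_i}\alpha_{\frk{q}_j}$.)

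The paper resolves this in two further steps that you are missing. First, after renormalizing to classes $\kappa_{\infty,n}^\ast(\hf,\hg)$, one only obtains
\[
\cores_{\cl{K}[nq]/\cl{K}[n]}\kappa_{\infty,nq}^\ast(\hf,\hg)\equiv P_{\frk{q}}\bigl(\bb{V}_{\hg,\psi}(\kappa_{\ac}^{k-2});\Fr_{\frk{q}}^{-1}\bigr)\,\kappa_{\infty,n}^\ast(\hf,\hg)\pmod{q-1},
\]
and then one invokes Rubin's lemma \cite[Lem.~9.6.1]{Rub} to pass from congruence-norm-relations to genuine Euler-system norm relations. Second, the extra twist by $\kappa_{\ac}^{k-2}$ is removed via the twisting result \cite[Thm.~6.3.5]{Rub}. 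Finally, for the balanced local condition at $p$ the paper does not argue via Hodge--Tate weights as you suggest, but appeals to the purity result of Nekov\'a\v{r}--Nizio\l\ \cite[Thm.~5.9]{NN16} for the Abel--Jacobi images, following \cite[Prop.~4.9]{ACR}.
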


\begin{proof} 
One can check that, defining
\[
\kappa_{\infty,n}^\ast(\hf,\hg):=\prod_{q\mid n} \omega_1^{-1}(q)\chi_{\hg}^{-1}(\varpi_{\frk{q}_1})\chi_{\hg}^{-1}(\varpi_{\frk{q}_2})\left(\kappa_{\ac}^{2-k}\psi_{\frk{P}}(\Fr_{\overline{\frk{q}}})\Fr_{\overline{\frk{q}}}\right)^2 \kappa_{\infty,n}(\hf,\hg)
\]
for all $n\in\mathcal{S}$, from the norm relations of Proposition~\ref{prop:norm-relations1} one has
\[
\cores_{\cl{K}[nq]/\cl{K}[n]}\kappa_{\infty,nq}^\ast (\hf,\hg)\equiv P_{\frk{q}}(\bb{V}_{\hg,\psi}(\kappa_{\ac}^{k-2});\Fr_{\frk{q}}^{-1})\kappa_{\infty,n}^\ast (\hf,\hg) \pmod{q-1}
\]
for any rational prime $q$ with $nq\in\mathcal{S}$, where 
\[
P_{\frk{q}}(\bb{V}_{\hg,\psi}(\kappa_{\ac}^{k-2});X)=\det(1-\Fr_{\frk{q}}^{-1}X\vert\bb{V}_{\hg,\psi}(\kappa_{\ac}^{k-2})).
\]
By \cite[Lem.~9.6.1]{Rub}, from $\kappa_{\infty,n}^\ast (\hf,\hg)$ we can obtain classes satisfying  norm relations involving the Euler factor $P_{\frk{q}}(\bb{V}_{\hg,\psi}(\kappa_{\ac}^{k-2});\Fr_{\frk{q}}^{-1})$, and by the twisting result of \cite[Thm.~6.3.5]{Rub}, we can get rid of the twist by $\kappa_{\ac}^{k-2}$, yielding classes $\kappa_{\psi,\hg,n,\infty}\in H^1_{\Iw}(\cl{K}[np^\infty],\bb{V}_{\hg,\psi})$ with the stated norm relations. Finally, the fact that these classes belong to the balanced Selmer group follows from \cite[Thm.~5.9]{NN16} by the same argument as in \cite[Prop.~4.9]{ACR}.
\end{proof}


\section{Verifying the hypotheses}\label{sec:big-image}

Let $g$ and $\psi$ be as in the introduction, and let $L$ be a finite extension of $\bb{Q}$ containing $F$, the Fourier coefficients of $g$, and the image of $\psi$. For each prime $\frk{P}$ of $L$, let
\[
\rho_{g,\frk{P}}^{\std}: G_F \longrightarrow \mathrm{GL}_2(L_{\frk{P}})
\]
be the corresponding Galois representation attached to $g$, as described in \cite[Thm.~4.3.1]{LLZ-AF}. From these representations, we can define the corresponding Asai representations
\[
\rho_{g,\frk{P}}: G_{\bb{Q}}\longrightarrow \mathrm{GL}_4(L_{\frk{P}})
\]
by taking the tensor-induction of $\rho_{g,\frk{P}}^{\std}$. 
We denote by $\As(g)_{\frk{P}}$ the corresponding $4$-dimensional Galois-module. Since we are interested in prescribing good choices of $\frk{P}$, we write $V_{\frk{P}}$ for the $G_{\cl{K}}$-representation
\[
V_{\frk{P}} = \As(g)_{\frk{P}}(\psi_{\mathfrak P}^{-1})(2-l-k/2).
\]
within which we fix a Galois stable lattice $T_{\frk{P}}$. Note that the representation $V_{\frk{P}}$ is conjugate self-dual, i.e. $V_{\frk{P}}^c\simeq V_{\frk{P}}^\vee(1)$.

The aim of this section is to give conditions 
under which hypothesis (HS) in \cite[\S8.1]{ACR} hold for $T_{\frk{P}}$ (and so also the weaker hypothesis (HW) for $V_{\frk{P}}$ in \emph{loc.\,cit.}), so the general results of \cite{JNS} can be applied to the Euler system constructed in Theorem~\ref{thm:ES}.

We denote by $g^\sigma$ the internal conjugate of $g$. Throughout this section, we make the following assumptions:
\begin{itemize}
    \item $g$ is not of CM type;
    \item $g$ is not Galois-conjugate to $g^\sigma$.
\end{itemize}


As in \cite[\S 3.1]{Loe}, we define subfields $F_g=F_{g^\sigma}$ of $L$, open subgroups $H_g$ and $H_{g^\sigma}$ of $G_{F}$, quaternion algebras $B_g$ and $B_{g^\sigma}$, and algebraic groups $G_g$ and $G_{g^\sigma}$, and we put
\[
B=B_g\times B_{g^\sigma}, \quad G=G_g\times_{\bb{G}_m} G_{g^\sigma}.
\]
Let $H = H_g \cap H_{g^\sigma}\cap G_{\cl{K}(\mathfrak c)^{\circ}}$, where $\cl{K}(\mathfrak c)^\circ$ denotes the ray class field of $\cl{K}$ of conductor $\frk{c}$. Then we have an adelic representation
\[
\tilde{\rho}_{g,g^\sigma}^{\std} : H \longrightarrow G(\hat{\bb{Q}})
\]
and representations
\[
\tilde{\rho}_{g,g^\sigma,p}^{\std} : H \longrightarrow G(\bb{Q}_p)
\]
for every rational prime $p$. According to \cite[Thm. 3.2.2]{Loe}, for all but finitely many primes $p$ we can conjugate $\tilde{\rho}_{g,,g^\sigma,p}^{\std}$ so that $\tilde{\rho}_{g, g^\sigma,p}^{\std}(H) = G(\mathbb Z_p)$. Note that although the result is stated in the setting of elliptic modular forms, it follows from [\emph{op.\,cit.}, Rem.~2.3.2] that the same result holds in the setting of Hilbert modular forms.



\begin{defn}
Let $\frk{P}$ be a prime of $L$ above $p$, and let $E$ be the subfield of $L_{\frk{P}}$ generated by the field $F_{g}$. We say that the prime $\frk{P}$ is {\it good} if the following conditions hold:
\begin{itemize}
    \item $p \geq 7$;
    \item $p$ is unramified in $B$;
    \item $p$ is coprime to $\frk{c}$ and $N_g$;
    \item $\tilde{\rho}_{g,g^\sigma,p}^{\std}(H) = G(\mathbb Z_p)$;
    \item $E = \mathbb Q_p$.
\end{itemize}
\end{defn}


\begin{lemma}\label{lemma:big-image}
Assume that there is at least one prime which divides $D_{\cl{K}}$ but not $D_F N_g$. Then, if $\frk{P}$ is a good prime,
\[
(\rho_{g,\frk{P}}\times\rho_{g^\sigma,\frk{P}})(H\cap G_{\cl{K}(p^\infty)^\circ})=\mathrm{SL}_2(\bb{Z}_p)\times \mathrm{SL}_2(\bb{Z}_p).
\]
\end{lemma}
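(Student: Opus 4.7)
My plan combines the good-prime hypothesis with a commutator argument exploiting the abelianity of $\mathrm{Gal}(\cl{K}(p^\infty)^\circ/\cl{K})$, together with a determinant-triviality analysis for the reverse containment that uses the ramified-prime hypothesis. First, by the fourth and fifth conditions in the definition of good prime, $\tilde{\rho} := \tilde{\rho}_{g,g^\sigma,p}^{\std}$ satisfies $\tilde{\rho}(H) = G(\bb{Z}_p)$, which under $E = \bb{Q}_p$ identifies with the fiber product $\GL_2(\bb{Z}_p) \times_{\bb{Z}_p^\times} \GL_2(\bb{Z}_p)$ over the common determinant. This group contains $\SL_2(\bb{Z}_p) \times \SL_2(\bb{Z}_p)$ as a normal subgroup with abelian quotient $\bb{Z}_p^\times$.

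For the containment $\supseteq$, I would set $H' := H \cap G_{F\cl{K}}$ (which has index at most $2$ in $H$); the quotient $H'/(H' \cap G_{\cl{K}(p^\infty)^\circ})$ embeds into the abelian group $\mathrm{Gal}(\cl{K}(p^\infty)^\circ/\cl{K})$, so $H \cap G_{\cl{K}(p^\infty)^\circ}$ contains the commutator $[H', H']$. Since $\tilde{\rho}(H')$ is open in $G(\bb{Z}_p)$ and $p \geq 7$ lies in the safe range for $\SL_2$, the closed commutator of any open subgroup of $\SL_2(\bb{Z}_p) \times \SL_2(\bb{Z}_p)$ equals the whole group (because $\mathfrak{sl}_2 \oplus \mathfrak{sl}_2$ is perfect, via the standard Lie-theoretic conversions for $p$-adic analytic pro-$p$ groups). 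This yields $\tilde{\rho}(H \cap G_{\cl{K}(p^\infty)^\circ}) \supseteq \SL_2(\bb{Z}_p) \times \SL_2(\bb{Z}_p)$.

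For the containment $\subseteq$, I would show that the common determinant character $\chi := \det \rho_{g,\frk{P}}^{\std} = \det \rho_{g^\sigma,\frk{P}}^{\std}$ is trivial on $H \cap G_{\cl{K}(p^\infty)^\circ}$. The key initial observation is that the ray class field $\cl{K}(p^\infty)^\circ$ contains $\bb{Q}(\mu_{p^\infty})$ by standard class field theory (the conductor-$p^n$ ray class field of $\cl{K}$ contains $\cl{K}(\mu_{p^n})$), so the wild cyclotomic piece $\epsilon_{\cyc}^{1-l}$ of $\chi$ is automatically trivial on $G_{\cl{K}(p^\infty)^\circ}$. The remaining finite-order nebentype part of $\chi$ is handled using the ramified-prime hypothesis: the existence of $\ell \mid D_{\cl{K}}$ with $\ell \nmid D_F N_g$ provides the class-field-theoretic `room' ensuring, via a comparison of conductor supports, that the nebentype becomes trivial after restriction to the relevant subgroup of $H$.

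I expect the most delicate step to be this last one: pinning down the triviality of the finite-order part of $\chi$ on $H \cap G_{\cl{K}(p^\infty)^\circ}$ requires carefully matching the conductor support of $\chi$ against the ramification structure of $\cl{K}(p^\infty)^\circ/\bb{Q}$, and this is precisely where the ramified-prime hypothesis is essential.
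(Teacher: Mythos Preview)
Your commutator idea for the containment $\supseteq$ is sound in outline but the justification you give is wrong. The assertion ``the closed commutator of any open subgroup of $\SL_2(\bb{Z}_p)\times\SL_2(\bb{Z}_p)$ equals the whole group'' is false: take any principal congruence subgroup. What actually makes your argument work is much more specific: $H'=H\cap G_{F\cl{K}}$ is \emph{normal} of index at most $2$ in $H$, so $\tilde\rho(H')$ is normal of index at most $2$ in $G(\bb{Z}_p)$ and hence contains the commutator subgroup $\SL_2(\bb{Z}_p)^2$ of $G(\bb{Z}_p)$; since $\SL_2(\bb{Z}_p)$ is perfect for $p\ge 5$, one then gets $[\tilde\rho(H'),\tilde\rho(H')]\supseteq\SL_2(\bb{Z}_p)^2$. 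The Lie-algebra remark about $\mathfrak{sl}_2\oplus\mathfrak{sl}_2$ only gives openness of the commutator of a small pro-$p$ subgroup, not equality with $\SL_2(\bb{Z}_p)^2$.

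You have also misplaced the role of the ramified-prime hypothesis. In the paper's proof it is used for the $\supseteq$ direction, not for $\subseteq$: one shows that the fields $F(\rho_{g,\frk{P}})$ and $F(\rho_{g^\sigma,\frk{P}})$, being unramified outside $pD_FN_g$, are linearly disjoint from $F\cl{K}$ (hence from $F\cl{K}_\infty$), so that passing from $H\cap G_{\bb{Q}(\mu_{p^\infty})}$ to $H\cap G_{\cl{K}_\infty(\mu_{p^\infty})}$ does not shrink the image; a final step treats the remaining index (dividing $p-1$) via the absence of such normal subgroups in $\SL_2(\bb{Z}_p)^2$. By contrast, the $\subseteq$ direction is immediate: the good-prime package from Loeffler's setup already gives $(\rho_{g,\frk{P}}\times\rho_{g^\sigma,\frk{P}})(H\cap G_{\bb{Q}(\mu_{p^\infty})})=\SL_2(\bb{Z}_p)^2$, and $\cl{K}(p^\infty)^\circ\supset\bb{Q}(\mu_{p^\infty})$. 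In particular, on $H$ the determinant is a pure power of the cyclotomic character---the nebentype is already absorbed into the definition of $H_g$---so your ``most delicate step'' is not needed and the ramified-prime hypothesis plays no role there.

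It is worth noting that, once repaired as above, your commutator route for $\supseteq$ does not invoke the ramified-prime hypothesis at all, whereas the paper's linear-disjointness route does. So your approach is genuinely different, and in this respect more economical; but as written, the crucial group-theoretic step is misstated and the discussion of where the hypothesis enters is inverted.
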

\begin{proof}
For the ease of notation, write $\cal{K}_\infty$ for the anticyclotomic $\bb{Z}_p$-extension $\cal{K}[p^\infty]$ of $\cal{K}$. Let $F(\rho_{g,\frk{P}})$ and $F(\rho_{g^\sigma,\frk{P}})$ be the Galois extensions of $F$ cut out by the representations $\rho_{g,\frk{P}}$ and $\rho_{g^\sigma,\frk{P}}$ attached to $g$ and $g^\sigma$, respectively, and note that they are unramified outside $pD_F N_g$. Therefore, the condition on $D_{\cl{K}}$ implies that $F\cl{K}\cap F(\rho_{g,\frk{P}})=F$ and $F\cl{K}\cap F(\rho_{g^\sigma,\frk{P}})=F$. Moreover, since any Galois extension of $F$ contained in $F\cl{K}_\infty$ must itself contain $F\cl{K}$, we also have $F\cl{K}_\infty\cap F(\rho_{g,\frk{P}})=F$ and $F\cl{K}_\infty\cap F(\rho_{g^\sigma,\frk{P}})=F$.

The conditions on $\frk{P}$ imply that
\[
(\rho_{g,\frk{P}}\times\rho_{g^\sigma,\frk{P}})(H\cap G_{\bb{Q}(\mu_{p^\infty})})=\mathrm{SL}_2(\bb{Z}_p)\times \mathrm{SL}_2(\bb{Z}_p),
\]
and, from the remarks in the previous paragraph, it follows that
\[
(\rho_{g,\frk{P}}\times\rho_{g^\sigma,\frk{P}})(H\cap G_{\cl{K}_\infty(\mu_{p^\infty})})=\mathrm{SL}_2(\bb{Z}_p)\times \mathrm{SL}_2(\bb{Z}_p).
\]
Finally, since $H\cap G_{\cl{K}(p^\infty)^\circ}$ is a normal subgroup of $H\cap G_{\cl{K}_\infty(\mu_{p^\infty})}$ of index dividing $p-1$ and there are no such subgroups in $\mathrm{SL}_2(\bb{Z}_p)\times \mathrm{SL}_2(\bb{Z}_p)$, the lemma follows.
\end{proof}

Now we are able to give conditions on $\frk{P}$ under which the results of \cite{JNS} can be applied to our setting, i.e. to the representation $T_{\frk{P}}$ defined above. We say that $g$ is \emph{non-Eisenstein} (resp. \emph{$p$-distinguished}) at $\frk{P}$ if the residual representation $\bar{\rho}_{g,\frk{P}}$ associated to $\rho_{g,\frk{P}}$ is irreducible (resp. the semi-simplication of $\bar{\rho}_{g,\frk{P}}\vert_{G_{F_{\frk{p}}}}$ is non-scalar for every prime $\frk{p}$ of $F$ above $p$).

\begin{prop}\label{prop:suff}
Assume that there is at least one prime which divides $D_{\cl{K}}$ but not $D_FN_g$. Let $\frk{P}$ be a good prime above $p$ at which $g$ is non-Eisenstein and $p$-distinguished, and assume that there exists $\eta\in G_{F\cl{K}(p^\infty)^\circ}$ such that $\psi_{\frk{P}}(\eta)\neq \psi_{\frk{P}}^c(\eta)$ modulo $p$. Then hypothesis (HS) in \cite[\S{8.1}]{ACR} hold for $T_\frk{P}$.
\end{prop}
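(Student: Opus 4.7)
The strategy is to verify each of the conditions making up hypothesis (HS) of \cite[\S{8.1}]{ACR} for the lattice $T_{\frk{P}}$: namely (a) absolute irreducibility of the residual representation $\bar T_{\frk{P}}$ as a $G_{\cl{K}}$-module, (b) the non-Eisenstein and $p$-distinguished conditions for $V_{\frk{P}}$, and (c) the Kolyvagin-type conditions requiring the existence of $\tau \in G_{\cl{K}}$ for which $\bar T_{\frk{P}}/(\tau-1)\bar T_{\frk{P}}$ is free of rank one, together with the vanishing of the small cohomology groups $H^i(\cl{K}(\bar T_{\frk{P}})/\cl{K},\bar T_{\frk{P}})$ for $i=0,1$. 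The key structural input is that, after restriction to $G_{F\cl{K}}$, the Asai representation decomposes as $\rho_{g,\frk{P}} \otimes \rho_{g^\sigma,\frk{P}}$, so Lemma~\ref{lemma:big-image} puts us in a position to analyze the image of $G_{F\cl{K}(p^\infty)^\circ}$ on the tensor product.

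For (a), reducing modulo $\frk{P}$ the image $\SL_2(\bb{Z}_p)\times\SL_2(\bb{Z}_p)$ from Lemma~\ref{lemma:big-image} gives that $\SL_2(\bb{F}_p)\times\SL_2(\bb{F}_p)$ sits inside the image on $\bar V_g\otimes \bar V_{g^\sigma}$ via the tensor-product action, which is absolutely irreducible; the twists by $\psi_{\frk{P}}^{-1}$ and the Tate character preserve irreducibility, and the existence of $\eta\in G_{F\cl{K}(p^\infty)^\circ}$ with $\psi_{\frk{P}}(\eta)\not\equiv\psi_{\frk{P}}^c(\eta)\pmod{\frk{P}}$ obstructs any further decomposition arising from passage through the quadratic extension $F\cl{K}/\cl{K}$. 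The conditions in (b) are then read off: the non-Eisenstein property of $\bar T_{\frk{P}}$ follows from the absolute irreducibility just established (sharpened using the non-Eisenstein hypothesis on $g$), while the $p$-distinguished property follows by tensor-inducing the ordinary filtration on $\bar\rho_{g,\frk{P}}$ at each prime of $F$ above $p$ and tracking graded pieces, where the twist by $\bar\psi_{\frk{P}}^{-1}$ and the self-dual Tate twist leave the four characters on $\Gr^\bullet \bar V_{\frk{P}}|_{G_{\cl{K}_{\frk{p}}}}$ pairwise distinct—here the hypothesis $\psi_{\frk{P}}\not\equiv\psi_{\frk{P}}^c$ is essential to separate the two middle graded pieces.

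For (c), I would construct $\tau$ by lifting, via Lemma~\ref{lemma:big-image}, a pair $(\gamma_1,\gamma_2)\in \SL_2(\bb{F}_p)\times\SL_2(\bb{F}_p)$ with eigenvalues chosen so that $\gamma_1\otimes\gamma_2$ has a simple eigenvalue equal to $1$ on $\bar V_g\otimes \bar V_{g^\sigma}$ (for instance, diagonal matrices with eigenvalues $(\alpha,\alpha^{-1})$ and $(\alpha^{-1},\alpha)$ for a generic $\alpha\in\bb{F}_p^\times$, using $p\ge 7$); the flexibility provided by $\psi_{\frk{P}}\not\equiv\psi_{\frk{P}}^c\pmod{\frk{P}}$ on $G_{F\cl{K}(p^\infty)^\circ}$ allows us to multiply this lift by an element to force the twisting characters to act trivially, yielding an element of $G_{\cl{K}}$ with the required simple unit eigenvalue on $\bar T_{\frk{P}}$. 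The cohomological vanishings in (HS) are then standard computations using that the image of $G_{\cl{K}(\mu_p)}$ in $\Aut(\bar T_{\frk{P}})$ contains (a central extension of) $\SL_2(\bb{F}_p)\times\SL_2(\bb{F}_p)$, for which $H^i$ with coefficients in the tensor-product module vanishes once $p\ge 7$.

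\textbf{Main obstacle.} The most delicate point is the Chebotarev-style step combining all constraints simultaneously: the action on $\bar V_g\otimes \bar V_{g^\sigma}$, on $\bar\psi_{\frk{P}}^{-1}$, and on the Tate twist. One has to verify the ``Galois independence'' of the Asai image from the $\psi$-image along $G_{F\cl{K}(p^\infty)^\circ}$, which is precisely where the hypotheses that some prime divides $D_{\cl{K}}$ but not $D_FN_g$, and that $\psi_{\frk{P}}\not\equiv\psi_{\frk{P}}^c\pmod{\frk{P}}$, must be used in tandem with the good-prime condition.
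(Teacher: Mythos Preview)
Your plan tracks the paper's proof for conditions (1') and (4') (your (a) and (b)), but there is a genuine gap in your construction of the Kolyvagin element (your (c), the paper's condition (2')). The specific pair you propose, $\gamma_1=\mathrm{diag}(\alpha,\alpha^{-1})$ and $\gamma_2=\mathrm{diag}(\alpha^{-1},\alpha)$, gives eigenvalues $\{1,\alpha^2,\alpha^{-2},1\}$ on $\bar V_g\otimes\bar V_{g^\sigma}$, so $1$ has multiplicity two, not one. This is not an accident of your choice: for \emph{any} element of $\SL_2\times\SL_2$ the eigenvalues on the tensor product come in reciprocal pairs $\{\lambda,\lambda^{-1}\}$, so the eigenvalue $1$ always occurs with even multiplicity. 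Consequently no element of $H\cap G_{\cl{K}(p^\infty)^\circ}$ (on which, as the paper notes, $\psi_{\frk{P}}$ is trivial) can have rank-one coinvariants on $\bar T_{\frk{P}}$. Your suggestion to then ``force the twisting characters to act trivially'' is therefore backwards: it is precisely the \emph{nontriviality} of the $\psi_{\frk{P}}^{-1}$-twist that must break this reciprocal-pair symmetry.

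The paper's proof handles this by working not in $H\cap G_{\cl{K}(p^\infty)^\circ}$ but in the coset $\eta\cdot(H\cap G_{\cl{K}(p^\infty)^\circ})$, where $\eta$ is the element from the hypothesis. By Lemma~\ref{lemma:big-image} the image of this coset under $\rho_{g,\frk{P}}\times\rho_{g^\sigma,\frk{P}}$ contains all pairs $\bigl(\mathrm{diag}(x,x^{-1}\chi_g(\eta)),\,\mathrm{diag}(y,y^{-1}\chi_g^\sigma(\eta))\bigr)$ with $x,y\in\bb{Z}_p^\times$. Setting $y=x^{-1}\psi_{\frk{P}}(\eta)$ and computing the action on $T_{\frk{P}}$ (now the twist contributes $\psi_{\frk{P}}(\eta)^{-1}$), the four eigenvalues become
\[
1,\quad x^{-2}\chi_g(\eta),\quad x^2\chi_g^\sigma(\eta)\psi_{\frk{P}}(\eta)^{-2},\quad \psi_{\frk{P}}^c(\eta)\psi_{\frk{P}}(\eta)^{-1}.
\]
The last one is $\not\equiv 1\pmod p$ exactly by the hypothesis on $\eta$, and the middle two can be made $\not\equiv 1$ by choosing $x$ (using $p\geq 7$). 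This is where the hypothesis $\psi_{\frk{P}}(\eta)\not\equiv\psi_{\frk{P}}^c(\eta)$ is actually used, and your outline does not capture this mechanism.

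A smaller point: for condition (3') the paper does not compute $H^i(\cl{K}(\bar T_{\frk{P}})/\cl{K},\bar T_{\frk{P}})$ directly as you propose, but simply exhibits $\tau_0\in H\cap G_{\cl{K}(p^\infty)^\circ}$ mapping to $(-I,I)\in\SL_2(\bb{Z}_p)\times\SL_2(\bb{Z}_p)$, which acts as $-1$ on $T_{\frk{P}}$. Also, the paper's condition (4') is the non-Eisenstein and $p$-distinguished hypothesis on $g$ itself (i.e.\ on the two-dimensional $\bar\rho_{g,\frk{P}}$), not a statement about the four graded pieces of $\bar V_{\frk{P}}$; your verification of (b) does more than is needed.
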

\begin{proof}
Since $\psi_{\frk{P}}$
is trivial when restricted to $H\cap G_{\cl{K}(p^\infty)^\circ}$, condition (1') follows easily from the previous Lemma~\ref{lemma:big-image}. To prove condition (2'), we closely follow the proof of \cite[Prop. 4.2.1]{Loe}. We regard $\chi_g$ as a finite order character of $G_F$ using class field theory. By Lemma~\ref{lemma:big-image}, the image of $\eta H\cap G_{\cl{K}(p^\infty)^\circ}$ under $\rho_{g,\frk{P}}\times\rho_{g^\sigma,\frk{P}}$ contains all the elements of the form
\[
\left(\begin{pmatrix} x & 0 \\ 0 & x^{-1}\chi_g(\eta)\end{pmatrix},\begin{pmatrix} y & 0 \\ 0 & y^{-1}\chi_g^\sigma(\eta)\end{pmatrix}\right),\quad x,y\in \bb{Z}_p^\times.
\]
Now choose $x\in\bb{Z}_p^\times$ satisfying 
\[
x^{-2}\chi_g(\eta)\not\equiv 1 \pmod p,\quad x^2\chi_g^\sigma(\eta)\psi_{\frk{P}}(\eta)^{-2}\not\equiv 1 \pmod p,
\]
which is possible since $p\geq 7$, and let $y=x^{-1}\psi_{\frk{P}}(\eta)$. Choose $\sigma_0\in \eta H\cap G_{\cl{K}(p^\infty)^\circ}$ whose image under $\rho_{g,\frk{P}}\times\rho_{g^\sigma,\frk{P}}$ is given by the element above, with the choices of $x$ and $y$ which we have just specified. Then, the eigenvalues of $\sigma_0$ acting on $T_\frk{P}$ are 1, $x^{-2}\chi_g(\eta)$, $x^2\chi_g^\sigma(\eta)\psi_{\frk{P}}(\eta)^{-2}$ and $\psi_{\frk{P}}^c(\eta)\psi_{\frk{P}}(\eta)^{-1}$, which proves condition (2').

To check condition (3'), we can argue as in \cite[Rem.~11.1.3]{KLZ}. By the previous lemma, we can find an element $\tau_0\in H\cap G_{\cl{K}(p^\infty)^\circ}$ such that
\[
(\rho_{g,\frk{P}}\times\rho_{g^\sigma,\frk{P}})(\tau_0)=\left(\begin{pmatrix} -1 & 0 \\ 0 & -1\end{pmatrix},\begin{pmatrix} 1 & 0 \\ 0 & 1\end{pmatrix}\right),
\]
so $\tau_0$ acts on $T_\frk{P}$ as multiplication by $-1$. Finally, condition (4') follows from the assumption that $g$ is non-Eisenstein and $p$-distinguished.
\end{proof}

\section{Applications}

In this section we apply the general machinery of \cite{JNS} to the Euler system constructed in this paper to  deduce applications towards the Bloch--Kato conjecture and the anticyclotomic Iwasawa Main Conjecture for Asai representations. 




\subsection{Bloch--Kato conjecture}

As in the introduction, let $g$ be a Hilbert eigenform of parallel weight $l$ and let $\psi$ be a Hecke character of infinity type $(1-k,0)$ for some even integer $k \geq 2$, and put
\[
V:=\mathrm{As}(V_g)\vert_{G_{\cal{K}}}(\psi_{\frk{P}}^{-1})(2-l-k/2),
\]
Write $\kappa_{\psi,g,h,n}\in H^1(\cal{K}[n],T)$ for the image of the Iwasawa cohomology classes of Theorem~\ref{thm:ES} under the corestriction map $H^1_{\Iw}(\cal{K}[np^\infty],T)\rightarrow H^1(\cal{K}[n],T)$. 

The following result yields evidence towards the Bloch--Kato conjecture in rank 1. 

\begin{theorem}\label{thm:BK}
Assume that $p$ splits in $\cal{K}$ and $p\nmid h_{\cal{K}}$, and that $g$ is ordinary at $p$. Let $\frk{P}$ be a good prime of $L$ above $p$ at which $g$ is non-Eisenstein and $p$-distinguished, and assume that the conditions in Proposition~\ref{prop:suff} are satisfied.
Let 
\[ \kappa_{\psi,g} := \kappa_{\psi,g,1}. 
\] 
If $k<2l$, then the following implication holds:
\[
\kappa_{\psi,g} \neq 0\quad\Longrightarrow\quad\dim_E \Sel(\cl{K}, V) =1,
\]
where $\Sel(\cl{K}, V) \subset H^1(\cl{K}, V)$ is the Bloch--Kato Selmer group.
\end{theorem}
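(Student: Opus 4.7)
The plan is to apply the anticyclotomic Kolyvagin machinery of Jetchev--Nekov\'a\v{r}--Skinner \cite{JNS} to the Euler system of Theorem~\ref{thm:ES}. The conclusion should fall out essentially as a black box from three inputs: the Euler norm relations satisfied by the classes $\kappa_{\psi,g,n}$, the big image hypothesis (HS) for $T$, and the non-triviality of the bottom class $\kappa_{\psi,g}$.

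First I would confirm that all the standing inputs are in place. The conjugate self-duality $V^c\simeq V^\vee(1)$ is a formal consequence of the tensor-induction construction of $\mathrm{As}(V_g)$ together with the assumption $\chi_\psi\varepsilon_\cl{K}\chi_g\vert_{\bb{A}_{\bb{Q},f}^\times}=1$. The big image hypothesis (HS) is provided by Proposition~\ref{prop:suff} under the assumptions of the theorem. The norm relations over primes $q\in\mathcal{S}$ that split in both $F$ and $\cl{K}$ are the content of Theorem~\ref{thm:ES}, and (since in particular these primes split in $\cl{K}$) this set is Chebotarev-dense enough to support the construction of Kolyvagin derivative classes as in \cite{JNS}.

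Next I would verify that $\kappa_{\psi,g}$ actually lies in the Bloch--Kato Selmer group. By construction $\kappa_{\psi,g,1,\infty}\in\Sel_{\bal}(\cl{K}[p^\infty],T)$, and corestriction gives $\kappa_{\psi,g}\in \Sel_{\bal}(\cl{K},T)\otimes_\cl{O} E$. The essential point is that in the range $k<2l$, the balanced local condition cut out by $\mathscr{F}^2V$ at each prime of $\cl{K}$ above $p$ coincides with the Bloch--Kato subspace $H^1_f(\cl{K}_\frk{p},V)$. This can be extracted from a direct computation with the Hodge--Tate weights of the ordinary three-step filtration on $\mathrm{As}(V_g)\vert_{G_{\bb{Q}_p}}$ recalled earlier in the excerpt, after accounting for the twist by $\psi_\frk{P}^{-1}(2-l-k/2)$: the jumps of the twisted filtration are positioned so that $\mathscr{F}^2V$ exhausts the positive-Hodge--Tate-weight part at both primes of $\cl{K}$ above $p$, matching the rank of $H^1_f$ at each place.

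With these ingredients in hand, the Kolyvagin derivative argument as formalized in \cite{JNS} takes over: assuming $\kappa_{\psi,g}\neq 0$, one constructs derivative classes at squarefree products of Kolyvagin primes and uses local--global duality, together with the conjugate self-duality of $V$, to bound $\Sel(\cl{K},V)$ above by the line $E\cdot\kappa_{\psi,g}$, which combined with the non-triviality of $\kappa_{\psi,g}$ yields $\dim_E\Sel(\cl{K},V)=1$. The main obstacle is the middle step: the identification of $\mathscr{F}^2V$ with $H^1_f$ in the range $k<2l$. Outside this range the balanced condition differs from Bloch--Kato, and one must instead bound the $\mathcal{F}$-unbalanced Selmer group via an explicit reciprocity law, which (as noted in the \emph{Further directions} subsection) is deferred to the sequel of this paper.
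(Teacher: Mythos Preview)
Your proposal is correct and follows essentially the same route as the paper: apply the general machinery of \cite{JNS} (in the form recalled in \cite[Thm.~8.3]{ACR}) to the Euler system of Theorem~\ref{thm:ES}, using Proposition~\ref{prop:suff} to guarantee the hypothesis (HS), and then identify $\Sel_{\bal}(\cl{K},V)=\Sel(\cl{K},V)$ when $k<2l$. The paper's proof is terser---it simply cites \cite[Lem.~9.1]{ACR} for the last step rather than spelling out the Hodge--Tate weight computation you sketch---but the structure is identical.
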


\begin{proof}
By virtue of Theorem~\ref{thm:ES}, the classes $\kappa_{\psi,g,h,n}$ land in  $\Sel_{\bal}(\cal{K}[n],T)$  and (for varying $n\in\mathcal{S}$) form an anticyclotomic Euler system for $V$. 
Thus, from the general results of \cite{JNS}, as recalled in \cite[Thm.~8.3]{ACR}, and whose technical conditions are satisfied thanks to Proposition~\ref{prop:suff}, the non-vanishing of $\kappa_{\psi,g}$ implies that the balanced Selmer group  $\Sel_{\bal}(\cal{K},V)$ is $1$-dimensional over $E$. Since for $k<2l$ one can show the equality
\[
\Sel_{\bal}(\cal{K},V)=\Sel(\cal{K},V)
\]
following the argument in \cite[Lem.~9.1]{ACR}, this concludes the proof.
\end{proof}


\subsection{Iwasawa Main Conjecture}

With notations as in Section~\ref{subsec:Selmer}, put 
\[
\mathbb A_{\hf g}^\dag={\Hom}_{\mathbb Z_p}(\mathbb V_{\hf g}^\dag,\mu_{p^\infty}),
\]
and for $\mathcal{L}\in\{\bal,\mathcal{F}\}$, define the Selmer group ${\Sel}_{\mathcal L}(\mathbb A_{\hf g}^\dag)$ 
as in Definition~\ref{def:Sel}, 
taking $H^1_{\mathcal L}(\mathbb Q_p,\mathbb A_{\hf g}^\dag)$ to be the orthogonal complement of $H^1_{\mathcal L}(\mathbb Q_p,\mathbb V_{\hf g}^\dag)$ under the local Tate duality
\[
H^1(\mathbb Q_p,\mathbb V_{\hf g}^\dag)\times H^1(\mathbb Q_p,\mathbb A_{\hf g}^\dag)\longrightarrow\bb{Q}_p/\bb{Z}_p.
\]
Taking $\hf$ to be the CM Hida family attached to $\psi$ as in Section~\ref{subsec:CM-hida}, similarly as in Lemma~\ref{lemma:LLZ} we then have a $G_{\bb{Q}}$-module isomorphism
\[
\mathbb V_{\hf g}^\dagger\simeq\Lambda_{\mathcal O}(\bm{\kappa}_{\text{ac}}^{-1})\hat{\otimes}_{\cl{O}} T.
\]
We then let $\Sel_{\mathcal{L}}(\cal{K}[p^\infty],T)\subset H^1_{\Iw}(\cal{K}[p^\infty],T)$ be the Selmer group corresponding to $\Sel_{\mathcal{L}}(\mathbb{V}_{\hf g}^\dagger)$ under the Shapiro isomorphism
\[
H^1(\mathbb Q,\mathbb V_{\hf g}^\dag) \cong H^1(\cl{K},\Lambda_{\mathcal O}(\bm{\kappa}_{\text{ac}}^{-1})\hat{\otimes}_{\cl{O}} T)\cong H^1_{\Iw}(\cl{K}_\infty, T),
\] 
and likewise let $\Sel_{\mathcal{L}}(\cal{K}[p^\infty],A)$ 
correspond to $\Sel_{\mathcal{L}}(\mathbb{A}_{\hf g}^\dagger)$ under $H^1(\mathbb Q,\mathbb V_{\hf g}^\dag)\cong H^1(\cal{K}[p^\infty],A)$. 
Finally, put
\[
X_{\mathcal L}(\cal{K}[p^\infty],A):=\Hom_{\text{cont}}(\Sel_{\mathcal L}(\cal{K}[p^\infty],A), \mathbb Q_p/\mathbb Z_p)
\]
for the Pontryagin dual of $\Sel_{\mathcal L}(\cal{K}[p^\infty],A)$, which is a finitely generated $\Lambda_\hf$-module. 




\begin{theorem}\label{thm:IMC}
Let the hypotheses as in Theorem \ref{thm:BK}. If the class
\[
\kappa_{\psi,g,\infty}:=\kappa_{\psi,g,1,\infty}
\]
is not $\Lambda_\hf$-torsion, then the modules $\Sel_{\bal}(\cal{K}[p^\infty],T)$ and $X_{\bal}(\cal{K}[p^\infty],A)$ have both $\Lambda_{\hf}$-rank one, and 
\[
\Char_{\Lambda_{\hf}}(X_{\bal}(\cal{K}[p^\infty],A)_{\tors})\supset \Char_{\Lambda_{\hf}}\biggl(\frac{\Sel_{\bal}(\cal{K}[p^\infty],T)}{\Lambda_{\hf} \cdot\kapinftys} \biggr)^2
\]
as ideals in $\Lambda_{\hf}\otimes_{\bb{Z}_p}\bb{Q}_p$, where the subscript $\mathrm{tors}$ denotes the $\Lambda_{\hf}$-torsion submodule.
\end{theorem}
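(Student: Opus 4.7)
The plan is to deduce Theorem~\ref{thm:IMC} as a direct application of the Iwasawa-theoretic Kolyvagin machinery of Jetchev--Nekov\'a\v r--Skinner \cite{JNS}, as formulated in the anticyclotomic setting in the authors' previous work \cite{ACR}, to the Euler system produced in Theorem~\ref{thm:ES}. The big image hypothesis (HS) for the lattice $T$ was verified in Proposition~\ref{prop:suff}, and the conjugate self-duality $V^c \simeq V^\vee(1)$ (arising from the self-duality condition $\chi_\psi\varepsilon_{\cal{K}}\chi_g\vert_{\bb{A}_{\bb{Q},f}^\times}=1$) places us squarely in the anticyclotomic framework of \cite{JNS}.

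The first step is to identify the data appearing in our setup with the JNS axioms: the Galois-stable lattice $T$, the anticyclotomic Iwasawa classes $\{\kappa_{\psi,g,n,\infty}\}_{n\in\mathcal{S}}$ together with their tame norm relations from Theorem~\ref{thm:ES}, and the local conditions defining the balanced Selmer group. A short local check reduces the compatibility between the balanced local condition at $p$ and the Greenberg--Panchishkin local condition used by \cite{JNS} to a rank-and-orthogonality statement for the submodule $\mathscr{F}^2\mathbb{V}_{\hf g}^\dag$ under the Weil pairing; this is a direct consequence of the filtration on $\mathrm{As}(V_g)$ recalled in Section~\ref{sec:review} together with the defining twist $\Xi_{\hf g}$. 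With this identification in hand, the Iwasawa-theoretic statement of \cite{JNS} (analogous in form to \cite[Thm.~8.3]{ACR}), under the standing hypothesis that $\kappa_{\psi,g,\infty}$ is not $\Lambda_\hf$-torsion, yields both the $\Lambda_\hf$-rank-one conclusion for $\Sel_{\bal}(\cal{K}[p^\infty],T)$ and, via the Poitou--Tate global duality exact sequence applied to the balanced/balanced pair of local conditions, the corresponding $\Lambda_\hf$-rank-one conclusion for $X_{\bal}(\cal{K}[p^\infty],A)$. The same argument produces the divisibility
\[
\Char_{\Lambda_{\hf}}(X_{\bal}(\cal{K}[p^\infty],A)_{\tors})\supset \Char_{\Lambda_{\hf}}\biggl(\frac{\Sel_{\bal}(\cal{K}[p^\infty],T)}{\Lambda_{\hf}\cdot\kappa_{\psi,g,\infty}}\biggr)^{2},
\]
where the square on the right is the characteristic feature of the anticyclotomic setting and reflects the action of complex conjugation on the Kolyvagin derivative classes constructed from the $\kappa_{\psi,g,n,\infty}$.

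The main technical obstacle is to ensure that every hypothesis of \cite[Thm.~8.3]{ACR} is genuinely met for the present representation. The big image assumption (HS) is exactly Proposition~\ref{prop:suff}; the non-triviality of residual derivative classes at auxiliary primes $q\in\mathcal{S}$ is a standard consequence of (HS) via a Chebotarev argument on the residual image; and the local-condition compatibility at $p$ is the filtration check described above. Beyond these verifications the proof is essentially a citation, running in close parallel with the argument of \cite{ACR} in the adjoint setting, with Theorem~\ref{thm:ES} providing the Euler-system input in the role played there by the diagonal classes.
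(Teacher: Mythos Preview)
Your proposal is correct and follows essentially the same approach as the paper: apply the general Iwasawa-theoretic results of \cite{JNS}, in the form packaged in \cite{ACR}, to the Euler system of Theorem~\ref{thm:ES}, with hypothesis (HS) supplied by Proposition~\ref{prop:suff}. The only minor discrepancy is that the relevant result in \cite{ACR} is Theorem~8.5 (the Iwasawa-theoretic divisibility), not Theorem~8.3 (the rank-one Bloch--Kato statement used for Theorem~\ref{thm:BK}); otherwise your outline matches the paper's proof, with your additional remarks on the local condition at $p$ simply making explicit what the paper leaves implicit in the citation.
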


\begin{proof}
In view of Proposition~\ref{prop:suff}, this follows from the general results of \cite{JNS}, in the form stated in \cite[Thm.~8.5]{ACR}, applied to the Euler system of Theorem~\ref{thm:ES}.
\end{proof}

The prediction that equality should hold in Theorem~\ref{thm:IMC}, even as ideals in $\Lambda_\hf$, is suggested by a natural extension of Perrin-Riou's Heegner point main conjecture \cite[Conj.~B]{PR}. It also admits a natural counterpart expressing the characteristic ideal of the $\cal{F}$-unbalanced Selmer group $X_{\mathcal F}(\cal{K}[p^\infty],A)$ in terms of a $p$-adic Asai $L$-function. This will be explored in the sequel to this work.

\bibliographystyle{amsalpha}
\bibliography{refs}

\end{document}